\newcommand{\Z}{\mathbb{Z}}
\newcommand{\Q}{\mathbb{Q}}
\newcommand{\R}{\mathbb{R}}
\newcommand{\del}{\!\setminus\!}
\newcommand{\one}{{\bf 1}}
\newcommand{\BB}{\mathcal{B}}
\newcommand{\ignore}[1]{}
\newcommand{\supp}{\text{supp}}
\newcommand{\dom}{\text{dom}}
\newcommand{\cha}{\operatorname{char}}
\newcommand{\im}{\operatorname{im}}
\newcommand{\Rinf}{\mathbb{R}_\infty}
\newcommand{\Zinf}{\mathbb{Z}_\infty}
\newcommand{\val}{\mathrm{val}}
\newtheorem{theorem}{Theorem}
\newtheorem{definition}[theorem]{Definition}
\newtheorem{lemma}[theorem]{Lemma}
\newtheorem{corollary}[theorem]{Corollary}
\newtheorem{example}[theorem]{Example}
\begin{document}
\title{Algebraic matroids and Frobenius  flocks}
\author{Guus P.~Bollen}\thanks{GPB is funded by a grant from Stichting Computer Algebra Nederland}
\author{Jan Draisma}
\thanks{JD is partially funded by Vidi
and Vici grants from the Netherlands Organisation for Scientific Research (NWO)}
\author{Rudi Pendavingh}

\address[Guus P.~Bollen]{Department of Mathematics and Computer Science,
Eindhoven University of Technology, P.O.~Box 513, 5600 MB Eindhoven, The Netherlands}
\email{g.p.bollen@tue.nl}

\address[Jan Draisma]{Mathematical Institute, University of Bern,
Sidlerstrasse 5, 3012 Bern, Switzerland; and Eindhoven University of
Technology, The Netherlands}
\email{jan.draisma@math.unibe.ch}

\address[Rudi Pendavingh]{Department of Mathematics and Computer Science,
Eindhoven University of Technology, P.O.~Box 513, 5600 MB Eindhoven, The Netherlands}
\email{r.a.pendavingh@tue.nl}

\begin{abstract}
We show that each algebraic representation of a matroid $M$ in positive characteristic determines a matroid valuation of $M$, which we have named the {\em Lindstr\"om valuation}. If this valuation is  trivial, then a linear representation of $M$ in characteristic $p$ can be derived from the algebraic representation. Thus, so-called rigid matroids, which only admit  trivial valuations, are algebraic in positive characteristic $p$ if and only if they are linear in characteristic $p$.

To construct the Lindstr\"om valuation, we introduce new matroid representations called flocks, and show that each algebraic representation of a matroid induces  flock representations.
\end{abstract}

\maketitle

\section{Introduction}
Several years before Whitney defined matroids in \cite{Whitney1935}, Van der Waerden \cite{vanderWaerden1940} recognized the commonality between algebraic and linear independence which is the defining property of matroids  (see \cite[Section 39.10b]{SchrijverBook}).
Today, linear matroids are the subject of some of the
deepest theorems in combinatorics. By contrast, our
understanding of {\em algebraic} matroids is much less advanced.

Algebraic matroids seem forebodingly hard to deal with,
compared to  linear matroids. Deciding if a matroid has a
linear representation over a given field amounts to solving
a system of polynomial equations over that field, which
makes the linear representation problem decidable over
algebraically closed fields
(for the situation of representability over $\Q$, see
\cite{Sturmfels87}). We do not know of any procedure to decide if a matroid has an algebraic representation; for example, it is an open problem if the Tic-Tac-Toe matroid, a matroid of rank 5 on 9 elements, is algebraic \cite{Hochstattler1997}. It is possible to describe the set of all linear representations of a given matroid as being derived from a `universal' matroid representation of that matroid over a partial field \cite{PendavinghZwam2010}. We tried to find a full set of invariants to classify (equivalence classes of) algebraic representations of the tiny uniform matroid $U_{2,3}$, and failed. Since algebraic representations appear so difficult to deal with in full detail, we are pleased to report that we found a nontrivial invariant of algebraic matroid representations in positive characteristic,  taking inspiration in particular from the work of Lindstr\"om \cite{Lindstrom1985}.

Throughout the paper, we use the language of algebraic
geometry to describe matroid representations, and we work
over an algebraically closed field $K$. An irreducible algebraic variety $X\subseteq K^E$ determines a matroid $M(X)$ with ground set $E$ by declaring a set $I\subseteq E$ independent if the projection of $X$ on $K^I$ is dominant, that is, if the closure of  $\{x_I: x\in X\}$ in the Zariski topology equals $K^I$. In the special case that $X$ is a linear space, $M(X)$ is exactly the matroid represented by the columns of any matrix whose rows span $X$. We next translate results of Ingleton and Lindstr\"om to the language of algebraic geometry.

Ingleton argued that if $K$ has characteristic 0, then for
any sufficiently general point $x\in X$, the tangent space
$T_xX$ of $X$ at $x$ will have the same dominant projections
as $X$ itself, so that then $M(X)=M(T_x X)$. Since such a
sufficiently general point always exists --- it suffices
that $x$ avoids finitely many hypersurfaces in $X$ --- a matroid which has an algebraic representation over $K$ also has a linear representation over $K$, in the form of a tangent space $T_x X$.

Ingletons argument does not generalize to fields $K$ of positive characteristic $p$. Consider the variety $X=V(x_1-x_2^p)=\{x\in K^2: x_1-x_2^p=0\}$, which represents the matroid on $E=\{1,2\}$ with independent sets $\emptyset, \{1\}, \{2\}$. The tangent space of $X$ at any $x\in X$ is $T_x X=V(x_1)$, which represents a matroid in which $\{1\}$ is a dependent set. Thus, $M(X)\neq M(T_xX)$ for all $x\in X$.

Lindstr\"om demonstrated that for some varieties $X$ this
obstacle may be overcome by applying the Frobenius map $F:
x\mapsto x^p$ to some of the coordinates, to derive varieties from $X$ which represent the same matroid. In case of the counterexample $X=V(x_1-x_2^p)$ above, we could define  $X'=\{(x_1, F(x_2)): x\in X\}.$
Then $M(X')=M(X)$, and $X'=V(x_1-x_2)$, so that $M(X')=M(T_{x'}X')$ for some (in fact, all) $x'\in X'$.
In general for an $X\subseteq K^E$, if we fix a vector $\alpha\in \Z^E$,  put $$\alpha x:=(F^{-\alpha_i}x_i)_{i\in E}\text{ and }\alpha X:=\{ \alpha x: x\in X\},$$ then it can be argued that $M(\alpha X)=M(X)$. This gives additional options for finding a suitable tangent space.

Lindstr\"om showed in \cite{Lindstrom1985} that if $X$ is any algebraic representation of the Fano matroid, then there necessarily exists an $\alpha$ so that $M(X)=M(T_\xi \alpha X)$ for a sufficiently general $\xi\in \alpha X$. Thus any algebraic representation of the Fano matroid spawns a linear representation in the same characteristic.

The choice of the matroid in Lindstr\"om's argument is not arbitrary. If we consider an algebraic representation $X$ of the non-Fano matroid in characteristic 2, then there cannot be an $\alpha\in\Z^E$ so that in a sufficiently general point $\xi\in \alpha X$ we have $M(X)=M(T_\xi \alpha X)$: then the tangent space would be a linear representation of the non-Fano matroid in characteristic 2, which is not possible. Thus any attempt to balance out the Frobenius actions $\alpha\in \Z^E$ will be fruitless in this case.

In the present paper, we consider the overall structure of
the map $\alpha\mapsto M(T_{\xi_\alpha} \alpha X)$, where
for each $\alpha$, $\xi_\alpha$ is the generic point of
$\alpha X$. A central result of this paper is that
a sufficiently general $x\in X$
satisfies $M(T_{\alpha x} \alpha X)=M(T_{\xi_\alpha} \alpha
X)$ for all $\alpha \in \Z^E$ (Theorem \ref{cor:AlgFlock}). This is nontrivial, as now
$x$ must {\em a priori} avoid countably many hypersurfaces, which
one might think could cover all of $X$.

Fixing such a general $x$, the assignment $V:\alpha\mapsto V_\alpha:=T_{\alpha x} \alpha X$ is what we have named a {\em Frobenius flock}: each $V_\alpha$ is a linear subspace of $K^E$ of fixed dimension $\dim V_\alpha=\dim X =:d$, and moreover
\begin{itemize}
\item[(FF1)] $V_{\alpha}/ i=V_{\alpha+e_i}\del i$ for all $\alpha\in\Z^E$ and $i\in E$; and
\item[(FF2)] $V_{\alpha+\one}=\one V_{\alpha} $ for all $\alpha\in \Z^E$.
\end{itemize}
Here we wrote $W/i:=\{w_{E-i}: w \in W, w_i=0\}$ and $W\del i:=\{w_{E-i}: w \in W\}$; $\one$ is the all-one vector and $e_i$ is the $i$-th unit vector, with $i$-th coordinate 1 and 0 elsewhere.

In this paper, we are primarily concerned with properties of the matroids represented by the vector spaces $V_\alpha$ --- in particular, we want to understand when there exists an $\alpha\in \Z^E$ so that  $M(X)=M(V_\alpha)$. We will call an assignment $M:\alpha\mapsto M_\alpha$ of a matroid of rank $d$ on $E$ to each $\alpha\in \Z^E$ a {\em matroid flock} if it satisfies
\begin{itemize}
\item[(MF1)] $M_\alpha/i=M_{\alpha+e_i}\del i$ for all $\alpha\in \Z^E$ and $i\in E$; and
\item[(MF2)] $M_\alpha=M_{\alpha+\one}$ for all $\alpha\in \Z^E$.
\end{itemize}
Clearly, any Frobenius flock $V$ gives rise to a matroid flock by taking $M_\alpha=M(V_\alpha)$. In particular, each algebraic matroid representation $X\subseteq K^E$ gives rise to a matroid flock
$M:\alpha\mapsto M(T_{\alpha x} \alpha X)$. This matroid flock does not depend on the choice of the very general point $x\in X$, since $M(T_{\alpha x} \alpha X)=M(T_{\alpha x'} \alpha X)$ for any two such very general points.

The matroid flock axioms (MF1), (MF2) may not appear to be very restrictive, but
matroid flocks (and Frobenius flocks) are remarkably
structured. For instance, while a matroid
flock is {\em a priori} specified by an infinite amount of
data, it follows from our work that a finite description
always exists. More precisely, we show that for each matroid flock  $M$ there is a matroid valuation $\nu:\binom{E}{d}\rightarrow \Z\cup\{\infty\}$ in the sense of Dress and Wenzel \cite{DressWenzel1992}, which conversely determines each $M_\alpha$ as follows: $B$ is a basis of $M_\alpha$ if and only if
$$\sum_{i\in B}\alpha_i-\nu(B)\geq \sum_{i\in B'}\alpha_i-\nu(B')$$
for all $B'\in \binom{E}{d}$ (Theorem \ref{thm:mf_char}).

In case the matroid flock $M$ derives from an algebraic representation $X$, this valuation $\nu$ gives us the overview over the matroids $M(T_{\alpha x} \alpha X)$ we sought.  In particular, we have $M(X)=M_\alpha=M(T_{\alpha x}\alpha X)$ if and only if  $\nu(B)=\sum_{i\in B}\alpha_i$ for all bases $B$ of $M(X)$, i.e. if $\nu$ is a {\em trivial} valuation. Recognizing the seminal work of Bernt Lindstr\"om, we named $\nu$ the {\em Lindstr\"om valuation} of $X$.

Dress and Wenzel call a matroid {\em rigid} if it  supports  trivial valuations only; and so we obtain a main result of this paper, that if a matroid $M$ is rigid, then $M$ is algebraic in characteristic $p$ if and only if $M$ is linear in characteristic $p$ (Theorem \ref{thm:alg_lin}).

In summary, we find that an irreducible variety $X\subseteq K^E$ not only determines a matroid $M(X)$, but also a valuation of that matroid. This reminds of the way a matrix $A$ over a valuated field $(K, \mbox{val})$ not only determines a linear matroid $M(A)$, but also a  valuation $\nu:B\mapsto \mbox{val}(\det(A_B))$ of $M(A)$. We will demonstrate that the valuated matroids obtained from algebraic representations in characteristic $p$  include all the valuated matroids obtained from linear representations over the valuated field $(\mathbb{Q},\val_p)$.

It is an open problem to characterize the valuated matroids which may arise from algebraic representations in characteristic $p$.

\subsection*{Acknowledgement}
We  thank Josephine Yu for identifying the lifting functions associated with alcoved polytopes. Her answer to our query led us to conjecture what became Theorem \ref{thm:mf_char} below.

\section{Preliminaries}
\subsection{Matroids}
A {\em matroid} is a pair $(E, \BB)$ where $E$ is a finite set and $\BB\subseteq 2^E$
is a set satisfying
\begin{itemize}
\item[(B1)] $\BB\neq \emptyset$; and
\item[(B2)] for all $B,B' \in \BB$ and $i \in B \setminus B'$, there exists a $j \in B' \setminus B$ so that  both $B-i+j\in \BB$ and $B'+i-j\in \BB$.
    \end{itemize}
Here we used the shorthand $B-i+j:=(B\cup\{j\})\del\{i\}$. The set $E$ is the {\em ground set} of the matroid $M=(E, \BB)$, and we call the elements $B\in \BB$ the {\em bases} of $M$. We refer to Oxley \cite{OxleyBook} for standard definitions and results in matroid theory. We summarize a few  notions which are used in this paper below.

If $M=(E, \BB)$ is a matroid, then a set $I\subseteq E$ is {\em independent} if $I\subseteq B$ for some basis $B$ of $M$. The independent sets of a matroid satisfy the following {\em independence axioms}:
\begin{itemize}
\item[(I1)] $\emptyset$ is independent;
\item[(I2)] if $I$ is independent and $J\subseteq I$, then $J$ is independent; and
\item[(I3)] if $I$ and $J$ are independent and $|I|<|J|$, then there exists a $j\in J\setminus I$ so that $I+j$ is independent.
\end{itemize}
A basis of a matroid may be characterized as an inclusionwise maximal independent set, and the independence axioms (I1), (I2), (I3) then imply the basis axioms (B1) and (B2). The independence axioms thus constitute an alternative definition of matroids.

The {\em rank function} of a matroid $M=(E, \BB)$ is the function $r_M:2^E\rightarrow \mathbb{N}$ determined by
$$r_M(J):=\max\{|J\cap B|: B\in \BB\}$$
for each $J\subseteq E$. We have $B\in \BB$ if and only if $|B|=r_M(B)=r_M(E)$, and so $(E,r_M)$ fully determines $M$. We will write $r(M):=r_M(E)$ for the rank of the entire matroid. Given $I\subseteq E$, $M\del I$ is the matroid on ground set $E\del I$ with rank function $r_{M\del I}:J\mapsto r_M(J)$, and $M/I$ is the matroid on $E\del I$ with rank function $r_{M/I}:J\mapsto r_M(I\cup J)-r_M(I)$.

The {\em connectivity function} of a matroid $M=(E, \mathcal{I})$ is the function $\lambda_M:2^E\rightarrow \mathbb{N}$
determined by
$$\lambda_M(J):=r_M(J)+r_M(\overline{J})-r_M(E)$$
for each $J\subseteq E$. Here and henceforth we denote $\overline{J}:=E\del J$, where $E$ is the ground set.

\subsection{Linear spaces and linear matroids}
Let $K$ be a field and let $E$ be a finite set. If $w\in K^E$, and $I\subseteq E$, then $w_I\in K^I$ is the restriction of $w$ to $I$. For a linear subspace $W\subseteq K^E$ and a set $I\subseteq E$, we define
$$W\del I:=\{w_{\overline{I}}: w\in W\}\text{ and }W/I:=\{w_{\overline{I}}: w\in W, w_I=0\}$$
Let $r_W:2^E\rightarrow \mathbb{N}$ be the function determined by $r_W(I):=\dim(W\del\overline{I})$. The following is well known.
\begin{lemma}Let $K$ be a field, let $E$ be a finite set, and let  $W\subseteq K^E$ be a linear subspace. Then $r_W$ is the rank function of a matroid on $E$.
\end{lemma}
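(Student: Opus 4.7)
The plan is to reduce this lemma to the classical fact that the columns of a matrix over $K$ form the ground set of a matroid under linear independence. I would begin by fixing a basis $w^{(1)}, \ldots, w^{(d)}$ of $W$, where $d := \dim W$, and assembling these as the rows of a $d \times E$ matrix $A$. Writing $a_i \in K^d$ for the $i$-th column of $A$, the restriction map $W \to K^I$, $w \mapsto w_I$, is represented in these coordinates by the submatrix $A_{*,I}$, so its image has dimension equal to $\text{rank}(A_{*,I})$. Invoking row rank $=$ column rank gives
\[
r_W(I) \;=\; \dim\{w_I : w \in W\} \;=\; \dim \text{span}\{a_i : i \in I\},
\]
which rewrites $r_W$ as the rank function of a finite collection of vectors in $K^d$.

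Next I would show that $\mathcal{I} := \{I \subseteq E : \{a_i\}_{i \in I} \text{ is linearly independent in } K^d\}$ satisfies the independence axioms (I1)--(I3) already recalled in the paper. Axioms (I1) and (I2) are immediate. For (I3), if $I, J \in \mathcal{I}$ with $|I| < |J|$, then $\dim \text{span}\{a_i : i \in I\} = |I| < |J|$, so the vectors $\{a_j : j \in J\}$ cannot all lie in $\text{span}\{a_i : i \in I\}$; any $j \in J$ with $a_j \notin \text{span}\{a_i : i \in I\}$ yields $I \cup \{j\} \in \mathcal{I}$. Thus the maximal elements of $\mathcal{I}$ are the bases of a matroid on $E$, and its rank function sends $I$ to the size of a maximal linearly independent subset of $\{a_i : i \in I\}$, which by the display above is $r_W(I)$.

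There is no real obstacle here: this is a classical fact, and the only minor translation needed is between the projection definition of $r_W$ used in the paper and the more familiar column-span formulation of the linear matroid of a matrix. The construction is independent of the chosen basis of $W$, as it must be, since $r_W$ itself is defined intrinsically in terms of $W$.
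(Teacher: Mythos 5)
The paper offers no proof of this lemma; it is simply stated as ``well known.'' Your argument is a correct and complete derivation of this classical fact, reducing $r_W$ to the rank function of the column matroid of a matrix via row rank equals column rank and then verifying the independence axioms (I1)--(I3) directly, which is exactly the standard route one would take.
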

We write $M(W)$ for the matroid on $E$ with rank function $r_W$. Taking minors of $M(W)$ commutes with taking `minors' of  $W$: for each $I\subseteq E$, we have $M(W)\del I=M(W\del I)$, and $M(W)/I=M(W/I)$.

If $M=M(W)$, then $\lambda_M(I)=0$ exactly if
$$\dim W=r_M(E)=r_M(\overline{I})+r_M(I)= \dim(W\del I) + \dim (W\del \overline{I}).$$
This is equivalent with $W=(W\del I)\times (W\del \overline{I})$.

\subsection{Matroid valuations} For a finite set $E$ and a natural number $d$, we write $$\binom{E}{d}:=\{J\subseteq E: |J|=d\},$$ and we will abbreviate $\Rinf:=\R\cup\{\infty\}$ and $\Zinf:=\Z\cup\{\infty\}$.
A \emph{matroid valuation} is a function  $\nu: \binom{E}{d} \rightarrow \Rinf$, such that
\begin{itemize}
\item[(V1)] there is a $B\in \binom{E}{d}$ so that $\nu(B)<\infty$; and
\item[(V2)] for all $B,B' \in \binom{E}{d}$ and $i \in B \setminus B'$, there exists a $j \in B' \setminus B$ so that $$\nu(B) + \nu(B') \geq \nu(B-i+j) + \nu(B'+i-j).$$
\end{itemize}
Matroid valuations were introduced by Dress and Wenzel in \cite{DressWenzel1992}. For any valuation $\nu$, the set $$\BB^\nu:=\left\{B\in \binom{E}{d}: \nu(B)<\infty\right\}$$  satisfies the matroid basis axioms (B1) and (B2), so that $M^{\nu}:=(E,\BB^\nu)$ is a matroid, the {\em support matroid} of $\nu$. If $N=M^\nu$, then we also say that $\nu$ is a valuation of $N$.

\subsection{Murota's discrete duality theory}
We briefly review the definitions and results we use from  \cite{MurotaBook}.
For an $x\in \Z^n$, let
$$\supp(x):=\{i: x_i\neq 0\}, ~ \supp^+(x):=\{i: x_i> 0\},~\supp^-(x):=\{i: x_i< 0\}.$$
For any function $f:\Z^n\rightarrow \Rinf$, we write $\dom(f):=\{x\in \Z^n: f(x)\in\R\}$.
We write $e_i$ for the $i$-th unit vector, with $i$-th coordinate 1 and 0 elsewhere.
\begin{definition}
A function $f: \Z^n \rightarrow \Zinf$ is called \emph{M-convex} if:
\begin{enumerate}
  \item $\dom(f)\neq\emptyset$; and
  \item for all $x,y \in \dom(f)$ and $i \in \supp^+(x-y)$, there exists $j \in \supp^-(x-y)$ so that $f(x) + f(y) \geq f(x - e_i + e_j) + f(y + e_i - e_j)$.
\end{enumerate}
\end{definition}

Let $x,y \in \Z^n$. We write $x\vee y := (\max\{x_i,y_i\})_{i }$ and  $x \wedge y := (\min\{x_i,y_i\})_{i}$, and let $\one$ denote the all-one vector.

\begin{definition}A function $g: \Z^n\rightarrow \Zinf$ is  \emph{L-convex} if
\begin{enumerate}
  \item dom$(g) \neq \emptyset$;
  \item  $g(x) + g(y) \geq g(x \vee y) + g(x \wedge y)$ for all $x,y \in \Z^n$; and
  \item there exists an $r \in \Z$ so that $g(x + \one) = g(x) + r$ for all $x \in \Z^n$.
\end{enumerate}
\end{definition}
For any $h:\Z^n\rightarrow \Zinf$ with nonempty domain, the \emph{Legendre-Fenchel dual} is the function $h^\bullet: \Z^n \rightarrow \Zinf$ defined by
$$h^\bullet(x) = \sup \{ x^T y - h(y) : y \in \Z^n \}.$$
The following key theorem describes the duality of M-convex and L-convex functions \cite{MurotaBook}.
\begin{theorem}\label{thm:MLdual} Let $f,g:\Z^n\rightarrow \Zinf$. The following are equivalent.
\begin{enumerate}
\item $f$ is M-convex, and $g=f^\bullet$; and
\item $g$ is L-convex, and $f=g^\bullet$.
\end{enumerate}
\end{theorem}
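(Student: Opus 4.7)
The plan is to establish the equivalence via three pieces: (i) the Legendre--Fenchel transform sends M-convex functions to L-convex functions, (ii) it sends L-convex functions to M-convex functions, and (iii) biconjugacy $h^{\bullet\bullet}=h$ holds for each class. Granted (i)--(iii), each direction of the theorem follows at once: starting from (1), $g=f^\bullet$ is L-convex by (i) and $g^\bullet=f$ by (iii), yielding (2); the reverse implication is symmetric.

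For (i), I would first show that the M-exchange axiom, applied iteratively along sequences that reduce $\|x-y\|_1$ by $2$ at each step, forces $\dom(f)$ to lie in a single affine hyperplane $\{y\in\Z^n:\one^T y=r\}$ for some $r\in\Z$. Otherwise one could reach a pair with $\|x-y\|_1\leq 1$ while $\one^T x\neq \one^T y$, making either $\supp^+(x-y)$ or $\supp^-(x-y)$ empty and contradicting the axiom's requirement that both sides be nonempty simultaneously. This hyperplane property gives $f^\bullet(x+\one)=f^\bullet(x)+r$ immediately. For submodularity of $f^\bullet$, pick $y,y'\in\dom(f)$ attaining the suprema defining $f^\bullet(x)+f^\bullet(x')$; if $y\neq y'$, for $i\in\supp^+(y-y')$ the M-exchange supplies $j\in\supp^-(y-y')$ with
$$f(y)+f(y')\geq f(y-e_i+e_j)+f(y'+e_i-e_j).$$
A case analysis comparing $x_i$ to $x'_i$ and $x_j$ to $x'_j$ shows that the exchange has the same effect on the bilinear term $x^T y+(x')^T y'$ as swapping the roles of $x,x'$ in the offending coordinates and relabeling by $(x\vee x',x\wedge x')$. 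Hence the exchange drives $(y,y')$ monotonically toward $(y\vee y',y\wedge y')$, at which terminal configuration the submodularity inequality reads off directly.

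Direction (ii) is the mirror argument, with the submodularity and translation invariance of $g$ unpacked, at a pair of sup-attaining points, into the M-exchange inequality for $g^\bullet$. The main obstacle is (iii), biconjugacy, which amounts to a discrete separation theorem: for every integer point strictly above the graph of an M- or L-convex function there exists an integer-slope affine functional lying globally below the function and meeting it at some preimage. Establishing such separating functionals requires a delicate coordinate-wise construction of subgradients from the exchange or submodularity axioms, and is where the technical core of Murota's original argument in \cite{MurotaBook} resides; any self-contained proof would need to repeat that construction.
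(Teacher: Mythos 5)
The paper does not prove Theorem~\ref{thm:MLdual}; it is quoted from \cite{MurotaBook} and invoked as a black box, so there is no internal argument to compare your sketch against. The relevant question is whether the sketch itself would go through, and it has a genuine gap beyond the biconjugacy step you explicitly defer to Murota.

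The gap is in the submodularity argument of direction (i). You claim the M-exchange ``drives $(y,y')$ monotonically toward $(y\vee y',y\wedge y')$,'' but you have just established that $\dom(f)$ lies in a hyperplane $\{z:\one^Tz=r\}$, and the exchange $y\mapsto y-e_i+e_j$, $y'\mapsto y'+e_i-e_j$ preserves both $\one^Ty$ and $\one^Ty'$; hence the process can never reach $(y\vee y',y\wedge y')$, since for incomparable $y,y'$ those two points lie strictly off that hyperplane, on opposite sides. The pair you actually need to reach is a coordinatewise ``crossover'' of $(y,y')$ along $\{i:x_i<x'_i\}$, and to steer the exchange toward it you would have to control the sign of both $x'_i-x_i$ (for the $i$ you choose) and $x_j-x'_j$ (for the $j$ the axiom hands you), which the M-exchange axiom does not permit. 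Murota's actual proof of this step has a different structure. There is also a small, fixable slip in the hyperplane argument: the two-sided exchange need not decrease $\|x-y\|_1$ (for instance when $(x-y)_i=1$); the cure is to move only $y\mapsto y+e_i-e_j$, which lies in $\dom(f)$ because the exchange inequality keeps the right-hand side finite, strictly decreases $\|x-y\|_1$ by $2$, and preserves $\one^Ty$, so the infinite-descent contradiction goes through when $\one^Tx\neq\one^Ty$.
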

Finally, Murota provides the following local optimality criterion for L-convex functions. Let $e_I:=\sum_{i\in I} e_i$ for any $I\subseteq \{1,\ldots, n\}$, and let $\one$ denote the all-one vector.
\begin{lemma}\label{thm:Llocalopt}(Murota \cite{MurotaBook}, 7.14)
Let $G$ be an L-convex function on $\Z^n$, and let $x \in \Z^n$. Then
\[\forall y \in \Z^n: G(x) \leq G(y) \Longleftrightarrow \left\{
                                                                    \begin{array}{ll}
                                                                      \forall I \subset \{1,\ldots,n\}: G(x) \leq G(x + e_I), \\
                                                                      G(x) = G(x + \one).
                                                                    \end{array}
                                                                  \right.
\]
\end{lemma}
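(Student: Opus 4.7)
The forward direction is immediate: a global minimum is \emph{a fortiori} a local minimum in the weaker sense demanded on the right. For the converse I would argue by contradiction, and the main delicacy I anticipate is producing a genuinely non-comparable pair on which submodularity can bite---the obvious candidates $(y,x)$ and $(y,x+e_I)$ are comparable with $y$ and yield only tautological submodular inequalities.

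Assuming the two local conditions hold at $x$ yet $G(y)<G(x)$ for some $y\in\Z^n$, the first reduction uses axiom (3) of L-convexity: the constant $r$ with $G(z+\one)=G(z)+r$ must equal zero by the hypothesis $G(x)=G(x+\one)$, so $G$ is $\one$-periodic. Submodularity applied to $(x,y)$ shows
\[ G(x\vee y)+G(x\wedge y)\leq G(x)+G(y)<2G(x), \]
so at least one of $x\vee y$, $x\wedge y$ is itself a counterexample; a $\one$-shift then reduces us to the case $y\geq x$, $y\neq x$. Among all such counterexamples I pick $y$ minimizing $\|y-x\|_1$, and set $I:=\{i:y_i>x_i\}$. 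Minimality forces $I\subsetneq\{1,\ldots,n\}$: otherwise $y-\one\geq x$ would still be a counterexample with strictly smaller $\ell_1$-distance to $x$ by $\one$-periodicity.

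The device that unlocks the descent is to exploit $\one$-periodicity and compare $y$ with $x+\one$ instead, which is genuinely incomparable with $y$ precisely when $I\subsetneq\{1,\ldots,n\}$. A coordinatewise check gives $y\vee(x+\one)=y+e_{\overline{I}}$ and $y\wedge(x+\one)=x+e_I$, and $\one$-periodicity simplifies $G(y+e_{\overline{I}})=G(y-e_I)$ and $G(x+\one)=G(x)$. The resulting submodular inequality
\[ G(y)+G(x)\geq G(y-e_I)+G(x+e_I), \]
combined with the hypothesis $G(x)\leq G(x+e_I)$, yields $G(y)\geq G(y-e_I)$. Thus $y-e_I$ inherits $G(y-e_I)<G(x)$, is still $\geq x$, cannot equal $x$ (else $G(x)<G(x)$), and has strictly smaller $\ell_1$-distance to $x$---contradicting the minimality of $y$ and completing the proof.
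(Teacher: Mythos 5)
The paper does not prove this lemma itself---it is cited as is from Murota's monograph---so there is no in-paper argument to compare against; I can only assess your proof on its own merits. Your argument is correct, and it is essentially the standard descent proof for L-convex local optimality: deduce $r=0$ from $G(x)=G(x+\one)$ so that $G$ is $\one$-periodic, use submodularity on $(x,y)$ together with a $\one$-shift to reduce to a counterexample $y\geq x$, $y\neq x$, minimize $\|y-x\|_1$, observe $I:=\{i:y_i>x_i\}$ is a nonempty proper subset, and then apply submodularity to the pair $(y,\,x+\one)$ via the identities $y\vee(x+\one)=y+e_{\overline{I}}$ and $y\wedge(x+\one)=x+e_I$ to conclude $G(y-e_I)\leq G(y)<G(x)$, contradicting minimality. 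All the verifications (that $y-e_I\geq x$, $y-e_I\neq x$, and $\|y-e_I-x\|_1<\|y-x\|_1$) go through.

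Two small remarks. First, your aside that $y$ and $x+\one$ are ``genuinely incomparable precisely when $I\subsetneq\{1,\ldots,n\}$'' is not literally right from $I\subsetneq\{1,\ldots,n\}$ alone: if $y=x+e_I$ they are comparable. But in that case the hypothesis $G(x)\leq G(x+e_I)$ already contradicts $G(y)<G(x)$ directly, and in any event the submodular inequality you invoke holds regardless of comparability, so the formal argument is unaffected. Second, deducing $r=0$ from $G(x)=G(x+\one)$ silently assumes $G(x)<\infty$; this is the standard reading of Murota's theorem (the optimality claim is vacuous otherwise) and certainly holds in the paper's application, where $g=g^M$ is finite-valued on all of $\Z^E$, but it is worth making explicit.
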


\section{Matroid flocks}
A {\em matroid flock of rank $d$ on $E$} is a map $M$ which assigns a matroid $M_\alpha$ on $E$ of rank $d$ to each $\alpha\in \Z^E$, satisfying the following two axioms.
\begin{itemize}
\item[(MF1)] $M_\alpha/ i=M_{\alpha+e_i}\del i$ for all $\alpha\in\Z^E$ and $i\in E$.
\item[(MF2)] $M_\alpha=M_{\alpha+\one}$ for all $\alpha\in \Z^E$.
\end{itemize}

\begin{example}
Let $E=\{1,2\}$ and let $M_0$ be the matroid on $E$ with bases $\{1\}$ and
$\{2\}$. We claim that this extends in a unique manner to a matroid flock
$M$ on $E$ of rank $1$. Indeed, by (MF1) the rank-zero matroid $M_0/1$ equals $M_{e_1} \del 1$, so
that $\{1\}$ is the only basis in $M_{e_1}$. Repeating this argument, we
find $M_{ke_1}=M_{e_1}$ for all $k>0$. Similarly, $M_{ke_2}$
is the matroid with only one basis $\{2\}$ for all $k>0$.  Using (MF2)
this determines $M_\alpha$ for all $\alpha=(k,l)$: it equals $M_0$ if $k=l$,
$M_{e_1}$ if $k>l$, and $M_{e_2}$ if $k<l$. The phenomenon that most
$M_\alpha$ follow from a small number of them, like migrating birds
follow a small number of leaders, inspired our term {\em
matroid flock}. \hfill $\clubsuit$
\end{example}

For a matroid valuation $\nu:\binom{E}{d}\rightarrow \Rinf$ and an $\alpha\in \R^E$, let
$$\BB^\nu_\alpha:=\left\{B\in \binom{E}{d}: e_B^T\alpha-\nu(B)=g^\nu(\alpha)\right\},\text{ where }g^\nu(\alpha)=\sup\left\{e_{B'}^T\alpha-\nu(B'): B'\in \binom{E}{d}\right\}$$
and put $M^\nu_\alpha:=(E, \BB^\nu_\alpha)$.

The main result of this section will be the following  characterization of matroid flocks.
\begin{theorem} \label{thm:mf_char}Let $E$ be a finite set, let $d\in \mathbb{N}$, and let $M_\alpha$ be a matroid on $E$ of rank $d$ for each $\alpha\in \Z^E$. The following are equivalent:
\begin{enumerate}
\item $M:\alpha\mapsto M_\alpha$ is a matroid flock; and
\item there is a matroid valuation $\nu:\binom{E}{d} \rightarrow\Zinf$ so that $M_\alpha=M^\nu_\alpha$ for all $\alpha\in \Z^E$.
\end{enumerate}
\end{theorem}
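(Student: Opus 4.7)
The plan is to prove the two implications separately. Direction (2)$\Rightarrow$(1) is a direct verification, while (1)$\Rightarrow$(2) proceeds via Murota's duality applied to a discrete potential built from the flock.

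For (2)$\Rightarrow$(1), given a valuation $\nu$, the set $\BB^\nu_\alpha$ satisfies basis exchange because applying (V2) to two maximizers $B, B'$ and using $e_B^T\alpha + e_{B'}^T\alpha = e_{B-i+j}^T\alpha + e_{B'+i-j}^T\alpha$ forces the exchange witnesses to remain optimal. Axiom (MF2) is immediate because $\alpha\mapsto\alpha+\one$ shifts every score $e_B^T\alpha - \nu(B)$ by $d$. Axiom (MF1) follows from a case analysis on whether $g^\nu(\alpha)$ is achieved only by bases containing $i$, only by bases avoiding $i$, or by both; shifting $\alpha$ to $\alpha+e_i$ increases the first partial maximum by $1$ while leaving the second unchanged, and tracking which bases remain optimal realizes the equality $M^\nu_\alpha/i = M^\nu_{\alpha+e_i}\setminus i$.

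For (1)$\Rightarrow$(2), I construct a potential $g:\Z^E\to\Z$ by setting $g(0) := 0$ and prescribing the discrete gradient
\[
g(\alpha + e_i) - g(\alpha) := \begin{cases} 0 & \text{if $i$ is a loop of $M_\alpha$,} \\ 1 & \text{otherwise.} \end{cases}
\]
Path-independence reduces to vanishing circulation around every elementary square $\alpha, \alpha+e_i, \alpha+e_i+e_j, \alpha+e_j$: by (MF1), $i$ is a loop of $M_{\alpha+e_j}$ iff $i$ is a loop of $M_\alpha/j$, so the indicator equals $r_{M_\alpha}(\{i,j\}) - r_{M_\alpha}(\{j\})$, and the four circulation terms telescope to zero. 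An induction on $|I|$ using the iterated identity $M_{\alpha+e_J}\setminus J = M_\alpha/J$ (a consequence of (MF1) and the commutativity of deletion and contraction) then yields
\[
g(\alpha + e_I) - g(\alpha) = r_{M_\alpha}(I) \qquad (I \subseteq E).
\]
Setting $I = E$ delivers the L-convex axiom (L3) with $r = d$; local submodularity $g(\alpha+e_i)+g(\alpha+e_j) \geq g(\alpha+e_i+e_j)+g(\alpha)$ reduces via the rank formula to submodularity of $r_{M_\alpha}$ on $\{i\},\{j\}$; and a standard discrete-convex argument promotes local to global submodularity on $\Z^E$. So $g$ is L-convex, and Theorem~\ref{thm:MLdual} gives that $f := g^\bullet$ is M-convex. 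Define $\nu(B) := f(e_B)$ for $B \in \binom{E}{d}$; axiom (V2) for $\nu$ is the M-convexity of $f$ at unit-sum-$d$ $0/1$ vectors, and (V1) falls out of the identification below.

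To identify $M_\alpha$ with $M^\nu_\alpha$, I use Murota's local optimality criterion. By the definition of $f$, $B \in \BB^\nu_\alpha$ iff $\alpha$ minimizes the L-convex function $\beta \mapsto g(\beta) - e_B^T\beta$, whose $\one$-increment vanishes because $|B| = d$ matches the constant $r$ in (L3). Applying Lemma~\ref{thm:Llocalopt}, this is equivalent to $g(\alpha+e_I) - g(\alpha) \geq |B\cap I|$ for every $I\subseteq E$, which via the rank formula is the classical condition $r_{M_\alpha}(I) \geq |B\cap I|$ for $B$ to be a basis of $M_\alpha$. Hence $\BB^\nu_\alpha$ equals the set of bases of $M_\alpha$ for every $\alpha$, and any basis of $M_0$ witnesses (V1). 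The main conceptual step is setting up the discrete gradient so that the rank formula $g(\alpha+e_I)-g(\alpha)=r_{M_\alpha}(I)$ holds; once it does, Murota's duality and local optimality machinery carry out the rest of the work, with the local-to-global promotion of submodularity being the least self-contained ingredient.
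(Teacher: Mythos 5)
Your proof is correct and follows the same high-level strategy as the paper's: construct a potential $g:\Z^E\to\Z$ from the flock, show it is L-convex, take the Legendre--Fenchel dual $f=g^\bullet$, define $\nu(B):=f(e_B)$, and identify $M_\alpha=M^\nu_\alpha$ via Murota's local optimality criterion (Lemma~\ref{thm:Llocalopt}). The (2)$\Rightarrow$(1) direction is essentially identical to the paper's lemma. Where you genuinely diverge is in the two technical steps of (1)$\Rightarrow$(2).

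First, the well-definedness of $g$. The paper's Lemma~\ref{lem:mf_g} allows arbitrary steps $e_I$ from the start and proves vanishing of $l$ on closed walks by a contradiction argument that carefully rewrites walks to reduce a potential. You instead prescribe only unit increments $g(\alpha+e_i)-g(\alpha)=r_\alpha(\{i\})$, verify the zero-circulation condition on elementary squares using (MF1) to write $r_{\alpha+e_j}(\{i\})=r_\alpha(\{i,j\})-r_\alpha(\{j\})$, and then recover $g(\alpha+e_I)-g(\alpha)=r_\alpha(I)$ by an induction that plays the role of the paper's Lemma~\ref{lem:mf}. This is cleaner: the implicit appeal to the triviality of $H_1(\Z^E)$ (every unit-step closed walk is a boundary of elementary squares) is standard and replaces the paper's bespoke walk-rewriting argument.

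Second, L-convexity. The paper proves global submodularity of $g$ directly by a telescoping decomposition $\beta=(\alpha\wedge\beta)+\sum e_{I_j}$ together with a monotonicity-of-rank lemma (Lemma~\ref{lem:rank}). You instead observe that the elementary-square relation already gives \emph{local} submodularity at every point, and invoke the standard local-to-global promotion for finite-valued functions on $\Z^E$ (via the decreasing-increment property, proved by a double induction). You flag this promotion as the least self-contained ingredient, which is fair; it is a known fact but not stated explicitly as a citable lemma in Murota's book in the form you use it, so in a formal write-up you would want to supply the short inductive proof rather than call it standard. The paper's route avoids this by doing the global telescoping directly, at the cost of needing Lemma~\ref{lem:rank}. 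Both are legitimate; yours is somewhat more modular and exposes more clearly that all the matroid-flock information entering the L-convexity is already visible in adjacent squares.

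Two small points worth making explicit in a polished version: when you write ``$B\in\BB^\nu_\alpha$ iff $\alpha$ minimizes $\beta\mapsto g(\beta)-e_B^T\beta$,'' you should also handle the possibility $f(e_B)=\infty$ (the minimum does not exist), which correctly corresponds to $B\notin\BB^\nu_\alpha$ for every $\alpha$; and you need the identity $g=f^\bullet$ (from Theorem~\ref{thm:MLdual}) to know that $g^\nu=g$, i.e., that the supremum defining $g^\nu(\alpha)$ is attained only at vectors of the form $e_{B'}$ -- the paper makes this observation inside Lemma~\ref{lem:nuM}. Neither is a gap, just a place to add a sentence.
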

In what follows, we will first prove this characterization, and then derive a few facts about matroid flocks which will be used later in the paper.

\subsection{The characterization of matroid flocks} The implication (2)$\Rightarrow$(1) of Theorem \ref{thm:mf_char} is relatively straightforward.
\begin{lemma} Let $\nu:\binom{E}{d} \rightarrow\Zinf$ be a valuation. Then
\begin{enumerate}
\item $M^\nu_\alpha/ i=M^\nu_{\alpha+e_i}\del i$ for all $\alpha\in\Z^E$ and $i\in E$; and
\item $M^\nu_\alpha=M^\nu_{\alpha+\one}$ for all $\alpha\in \Z^E$.
\end{enumerate}
\end{lemma}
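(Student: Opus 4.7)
The strategy is to verify each axiom directly by elementary argmax reasoning, with the single auxiliary check that $M^\nu_\alpha$ is a matroid.

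First I would confirm that $\BB^\nu_\alpha$ is the basis set of a matroid. Nonemptiness follows from (V1) plus the finiteness of $\binom{E}{d}$. For basis exchange, take $B,B' \in \BB^\nu_\alpha$ and $i \in B \setminus B'$. Apply (V2) to $\nu$ to obtain $j \in B' \setminus B$ with $\nu(B)+\nu(B') \geq \nu(B-i+j) + \nu(B'+i-j)$. Since $e_{B-i+j}^T\alpha + e_{B'+i-j}^T\alpha = e_B^T\alpha + e_{B'}^T\alpha$, subtracting the valuation inequality gives
$$\bigl(e_{B-i+j}^T\alpha - \nu(B-i+j)\bigr) + \bigl(e_{B'+i-j}^T\alpha - \nu(B'+i-j)\bigr) \geq 2 g^\nu(\alpha);$$
as each summand is at most $g^\nu(\alpha)$, both are equal, so the swapped sets lie in $\BB^\nu_\alpha$.

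Claim (2) is immediate: for every $B \in \binom{E}{d}$ the shift $e_B^T(\alpha+\one) = e_B^T\alpha + d$ is the same constant, so the argmax set defining $\BB^\nu_\alpha$ is unchanged.

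For claim (1), the main observation is that $\alpha \mapsto \alpha + e_i$ increases the objective $e_B^T\alpha - \nu(B)$ by $1$ exactly when $i \in B$, and leaves it unchanged otherwise. Thus $g^\nu(\alpha+e_i) \in \{g^\nu(\alpha), g^\nu(\alpha)+1\}$, and the larger value occurs precisely when some $B \in \BB^\nu_\alpha$ contains $i$. I would split on this dichotomy. In the first case ($i$ lies in some basis of $M^\nu_\alpha$), $\BB^\nu_{\alpha+e_i} = \{B \in \BB^\nu_\alpha : i \in B\}$, so $i$ is a coloop of $M^\nu_{\alpha+e_i}$ and the bases of both $M^\nu_\alpha/i$ and $M^\nu_{\alpha+e_i} \del i$ coincide with $\{B \setminus \{i\} : B \in \BB^\nu_\alpha,\ i \in B\}$. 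In the second case ($i$ is in no basis of $M^\nu_\alpha$, hence a loop of $M^\nu_\alpha$), no member of $\BB^\nu_{\alpha+e_i}$ omitting $i$ can do better than it did before, so $\{B \in \BB^\nu_{\alpha+e_i} : i \notin B\} = \BB^\nu_\alpha$; then both $M^\nu_\alpha/i = M^\nu_\alpha \del i$ and $M^\nu_{\alpha+e_i}\del i$ have basis family $\BB^\nu_\alpha$ viewed on $E - i$.

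The only real obstacle is clerical: correctly handling the loop/coloop edge cases in the definitions of $M/i$ and $M\del i$. Once the case split above is set up, the verification in each case is a one-line match between argmax sets.
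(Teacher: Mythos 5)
Your proof is correct and follows essentially the same route as the paper: part (2) from the constant shift $e_B^T(\alpha+\one)=e_B^T\alpha+d$, and part (1) by splitting on whether $i$ is a loop of $M^\nu_\alpha$, then matching argmax sets under deletion/contraction. Your additional verification that $\BB^\nu_\alpha$ satisfies the basis exchange axiom is a nice sanity check that the paper leaves implicit, but is not a different line of attack.
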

\proof We prove (1). Consider $\alpha\in\Z^E$ and $i\in E$. If $i$ is a loop of $M_\alpha^\nu$, then  $g^\nu(\alpha+e_i)=g^\nu(\alpha)$. Then $B$ is a basis of $M^\nu_\alpha/ i$ if and only if
$$e^T_B(\alpha+e_i)-\nu(B)=e^T_B\alpha-\nu(B)=g^\nu(\alpha)=g^\nu(\alpha+e_i)$$
if and only if
$B$ is a basis of $M^\nu_{\alpha+e_i}\del i$.
On the other hand, if $i$ is not a loop of $M_\alpha^\nu$, then  $g^\nu(\alpha+e_i)=g^\nu(\alpha)+1$, and then $B'$ is a basis of $M^\nu_\alpha/ i$ if and only if $B=B'+i$ is a basis of $M_\alpha^\nu$, if and only if
$$e^T_B(\alpha+e_i)-\nu(B)=e^T_B\alpha-\nu(B)+1=g(\alpha)+1=g(\alpha+e_i)$$ if and only if $B'$ is a basis of $M^\nu_{\alpha+e_i}\del i$.

To see (2), note that $g^\nu(\alpha+\one)=g^\nu(\alpha)+d$. Then $B$ is a basis of $M^\nu_\alpha$ if and only if $$e^T_B(\alpha+\one)-\nu(B)=e^T_B\alpha-\nu(B)+d=g^\nu(\alpha)+d=g^\nu(\alpha+\one)$$
if and only if $B$ is a basis of $M^\nu_{\alpha+\one}$.
\endproof

We show the implication (1)$\Rightarrow$(2) of Theorem \ref{thm:mf_char}. Our proof makes essential use of the discrete duality theory of Murota \cite{MurotaBook}. Specifically, we will first construct an L-convex function $g: \Z^E\rightarrow \Z$ from the matroid flock $M$. The Fenchel dual $f$ of $g$ is then an M-convex function, from which we derive the required valuation $\nu$.
 Before we can prove the existence of a suitable function $g$, we need a few technical lemmas.

In the context of a matroid flock $M$, we will write $r_\alpha$ for the rank function of the matroid $M_\alpha$.
We first extend (MF1).
\begin{lemma}\label{lem:mf}Let $M$ be a matroid flock on $E$, let $\alpha\in\Z^E$ and let $I\subseteq E$. Then $M_\alpha/ I=M_{\alpha+e_I}\del I$.
\end{lemma}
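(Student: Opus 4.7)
The plan is to prove this by induction on $|I|$. The base case $|I|=0$ is vacuous (both sides equal $M_\alpha$), and the case $|I|=1$ is exactly axiom (MF1).

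For the inductive step, assume the claim holds for all subsets of size less than $|I|$ and all $\alpha$. Pick any $i \in I$ and set $I' := I \setminus \{i\}$. Starting from the left-hand side and using (MF1) to peel off $i$:
\[
M_\alpha / I \;=\; (M_\alpha / i)/I' \;=\; (M_{\alpha + e_i}\del i)/I'.
\]
Since $i \notin I'$, deletion of $i$ commutes with contraction of $I'$ in any matroid, so
\[
(M_{\alpha + e_i}\del i)/I' \;=\; (M_{\alpha + e_i}/ I')\del i.
\]
Now apply the induction hypothesis to the flock vector $\alpha + e_i$ and the smaller set $I'$:
\[
M_{\alpha+e_i}/I' \;=\; M_{(\alpha+e_i)+e_{I'}}\del I' \;=\; M_{\alpha + e_I}\del I'.
\]
Deleting $i$ from both sides and combining gives $M_\alpha/I = M_{\alpha+e_I}\del I$, completing the induction.

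There is no real obstacle here: the statement is just the iteration of (MF1), and the only point to check carefully is that minor operations on disjoint index sets commute, which is standard (see Oxley \cite{OxleyBook}). The only small subtlety is making sure the inductive hypothesis is formulated uniformly in $\alpha$, so that it can be applied to the shifted vector $\alpha + e_i$; this is automatic from our statement.
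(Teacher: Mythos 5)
Your proof is correct and follows essentially the same strategy as the paper's: induction on $|I|$, using (MF1) and the induction hypothesis together with the standard fact that deletion and contraction on disjoint index sets commute. The only (immaterial) difference is the order of peeling — you apply (MF1) at $i$ first and then the induction hypothesis at $\alpha+e_i$, whereas the paper applies the induction hypothesis at $\alpha$ with $I-i$ first and then (MF1) at $\alpha+e_{I-i}$.
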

\proof By induction on $I$. Clearly the lemma holds if $I=\emptyset$. If $I\neq\emptyset$, pick any $i\in I$. Using the induction hypothesis for $\alpha$ and $I-i$, followed by (MF1) for $\alpha+e_{I-i} $ and $i$, we have
$$M_\alpha/I=M_\alpha/(I-i)/i=M_{\alpha+e_{I-i}}\del (I-i)/i=M_{\alpha+e_{I-i}}/i\del (I-i)=M_{\alpha+e_I}\del i\del (I-i)=M_{\alpha+e_I}\del I,$$
as required.\endproof
\begin{lemma} \label{lem:triangle}Let $M$ be a matroid flock on $E$, let $\alpha\in\Z^E$ and let $I\subseteq J\subseteq E$. Then
$$r_{\alpha}(J)=r_{\alpha}(I)+r_{\alpha+e_I}(J\del I).$$
\end{lemma}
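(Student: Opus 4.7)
The plan is to apply Lemma~\ref{lem:mf} directly and compare rank functions on both sides of the identity $M_\alpha/I = M_{\alpha+e_I}\setminus I$.

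First I would observe that since $J \setminus I \subseteq E \setminus I$, the rank of $J \setminus I$ in either matroid $M_\alpha/I$ or $M_{\alpha + e_I}\setminus I$ is well-defined. By Lemma~\ref{lem:mf}, these two matroids are \emph{equal}, hence
$$r_{M_\alpha/I}(J\setminus I) = r_{M_{\alpha+e_I}\setminus I}(J\setminus I).$$

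Next I would evaluate each side using the standard rank formulas for contraction and deletion. For contraction, $r_{M_\alpha/I}(J\setminus I) = r_\alpha(I \cup (J\setminus I)) - r_\alpha(I) = r_\alpha(J) - r_\alpha(I)$, since $I \subseteq J$. For deletion, the rank of a subset of the remaining ground set is unchanged: $r_{M_{\alpha+e_I}\setminus I}(J\setminus I) = r_{\alpha+e_I}(J\setminus I)$. Equating and rearranging yields the claimed identity.

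There is no substantive obstacle here; the lemma is essentially just a reformulation of Lemma~\ref{lem:mf} at the level of rank functions, using the definitions of the rank functions of $M \setminus I$ and $M/I$ recalled in the preliminaries. The only mild care is in noting that $I \subseteq J$ is needed to collapse $I \cup (J \setminus I)$ to $J$.
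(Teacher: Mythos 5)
Your proof is correct and follows the same route as the paper: both invoke Lemma~\ref{lem:mf} to identify $M_\alpha/I$ with $M_{\alpha+e_I}\setminus I$ and then unwind the rank functions of the contraction and the deletion, using $I\subseteq J$ to simplify $r_\alpha(I\cup(J\setminus I))$ to $r_\alpha(J)$. The paper phrases this as a one-line chain of equalities via the restriction $(\cdot)\setminus\overline{J}$, but the content is identical.
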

\proof Using Lemma \ref{lem:mf}, we have
$r_{\alpha+e_I}(J\del I)=r(M_{\alpha+e_I}\del I\del\overline{J})=r(M_\alpha/I\del\overline{J})=r_{\alpha}(J)-r_{\alpha}(I),$ as required.
\endproof
If $\alpha,\beta\in \R^E$, we write $\alpha\leq \beta$ if $\alpha_i\leq \beta_i$ for all $i\in E$.
\begin{lemma}\label{lem:rank}Let $M$ be a matroid flock on $E$, let $\alpha, \beta\in\Z^E$ and let $I\subseteq E$. If $I\cap \supp(\beta-\alpha)=\emptyset$ and $\alpha\leq \beta$, then $r_\alpha(I)\geq r_\beta(I)$.
\end{lemma}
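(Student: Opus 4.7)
The plan is to reduce to the single-step case $\beta=\alpha+e_j$ for some $j\in E\setminus I$ by induction on the nonnegative integer
\[
\|\beta-\alpha\|_1 \;=\; \sum_{j\in E\setminus I}(\beta_j-\alpha_j),
\]
which is well defined because $I\cap\supp(\beta-\alpha)=\emptyset$ and $\alpha\leq\beta$. When $\|\beta-\alpha\|_1=0$ we have $\alpha=\beta$, so the conclusion is trivial. Otherwise pick $j\in E\setminus I$ with $\beta_j>\alpha_j$; it suffices to prove $r_\alpha(I)\geq r_{\alpha+e_j}(I)$, after which the induction hypothesis applied to $\alpha+e_j$ and $\beta$ closes the argument.

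For the single-step claim, the key tool is Lemma \ref{lem:triangle}, applied to the chain $\{j\}\subseteq I\cup\{j\}$ (note $j\notin I$, which is why this chain is legitimate). It gives
\[
r_\alpha(I\cup\{j\}) \;=\; r_\alpha(\{j\}) + r_{\alpha+e_j}\bigl((I\cup\{j\})\setminus\{j\}\bigr) \;=\; r_\alpha(\{j\}) + r_{\alpha+e_j}(I).
\]
Combining this identity with the standard submodular bound
\[
r_\alpha(I\cup\{j\}) \;\leq\; r_\alpha(I) + r_\alpha(\{j\})
\]
for the matroid $M_\alpha$ and cancelling $r_\alpha(\{j\})$ yields $r_{\alpha+e_j}(I)\leq r_\alpha(I)$, as required.

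I do not foresee a real obstacle: the lemma is essentially a repackaging of Lemma \ref{lem:triangle} together with submodularity of matroid rank, and the reduction to a single coordinate increase is routine. The only mild point to keep in mind is that both $\alpha$ and $\alpha+e_j$ are at integer vectors, so that (MF1) (through Lemma \ref{lem:triangle}) is actually available in the step; the hypotheses $\alpha,\beta\in\Z^E$ ensure this throughout the induction.
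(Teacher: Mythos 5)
Your proof is correct. It takes the same general route as the paper — induction on a size measure of $\beta-\alpha$ combined with a single-step inequality coming from submodularity and the matroid-flock machinery — but it decomposes the step differently. The paper moves $\alpha$ to $\alpha+e_J$ with $J:=\supp(\beta-\alpha)$, i.e.\ all active coordinates at once, inducting on $\max_i(\beta_i-\alpha_i)$, and gets the step from the inequality $r(M_\alpha\del J\del \overline{I\cup J}) \ge r(M_\alpha/J\del\overline{I\cup J})$ followed by Lemma~\ref{lem:mf}. You instead move one coordinate $j$ at a time, induct on the $\ell_1$-norm $\sum_j(\beta_j-\alpha_j)$, and get the step by applying Lemma~\ref{lem:triangle} to the chain $\{j\}\subseteq I\cup\{j\}$ and then cancelling $r_\alpha(\{j\})$ using submodularity of $r_\alpha$. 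Both reductions are sound: your argument is a finer-grained version that avoids the set $J$ and makes the role of submodularity more explicit, while the paper's single $e_J$-step is a little more compact. Since Lemma~\ref{lem:triangle} is itself a consequence of Lemma~\ref{lem:mf}, the two proofs ultimately rest on the same ingredients. Your checking that the inductive hypotheses persist ($\alpha+e_j\le\beta$, and $\supp(\beta-\alpha-e_j)\subseteq\supp(\beta-\alpha)$ so the disjointness with $I$ is preserved) is exactly the analogue of the paper's observation that $\alpha'=\alpha+e_J$ satisfies $\alpha'\le\beta$ and $\supp(\beta-\alpha')\subseteq\supp(\beta-\alpha)$.
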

\proof We use induction on $\max_i (\beta_i-\alpha_i)$. Let $J:=\supp(\beta-\alpha)$. Then
$$r_\alpha(I)=r(M_\alpha\del J\del\overline{I\cup J})\geq r(M_\alpha/J\del\overline{I\cup J})=r(M_{\alpha+e_J}\del J\del\overline{I\cup J})=r_{\alpha+e_J}(I).$$
Taking $\alpha':=\alpha+e_J$, we have $\alpha'\leq \beta$, $\max_i (\beta_i-\alpha'_i)<\max_i (\beta_i-\alpha_i)$ and $\supp(\beta-\alpha')\subseteq \supp(\beta-\alpha)$. Hence
$$r_\alpha(I)\geq r_{\alpha+e_J}(I)\geq r_\beta(I),$$
by using the induction hypothesis for $\alpha', \beta$.\endproof
We are now ready to show the existence of the function $g$ alluded to above.
\begin{lemma} \label{lem:mf_g}Let $M$ be a matroid flock on $E$. There is a unique function $g:\Z^E\rightarrow\Z$ so that
\begin{enumerate}
\item $g(0)=0$, and
\item $g(\alpha+e_I)=g(\alpha)+r_{\alpha}(I)$ for all $\alpha\in\Z^E$ and $I\subseteq E$.
\end{enumerate}
\end{lemma}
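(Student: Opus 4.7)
The plan is to construct $g$ explicitly by specifying its increments along edges of the standard cubical lattice in $\R^E$, and then to verify well-definedness and condition (2). Uniqueness is essentially free: specializing condition (2) to singletons $I=\{i\}$ forces $g(\alpha+e_i)-g(\alpha)=r_\alpha(\{i\})$, and together with $g(0)=0$ this determines $g$ at every lattice point by induction on $\ell^1$-distance to the origin.

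For existence, I would declare
$$g(\alpha+e_i)-g(\alpha):=r_\alpha(\{i\}) \text{ for all }\alpha\in\Z^E,\ i\in E,$$
and then extend by summing increments along any path from $0$. Such an assignment descends to a (necessarily integer-valued) function on the vertices of the cubical complex, normalized by $g(0)=0$, precisely when the total increment around every closed $2$-cell vanishes. Each $2$-cell has corners $\alpha,\alpha+e_i,\alpha+e_i+e_j,\alpha+e_j$ for distinct $i,j\in E$, and the cocycle condition reads
$$r_\alpha(\{i\})+r_{\alpha+e_i}(\{j\})=r_\alpha(\{j\})+r_{\alpha+e_j}(\{i\}).$$
Applying Lemma \ref{lem:triangle} twice with $J=\{i,j\}$, once with $I=\{i\}$ and once with $I=\{j\}$, shows both sides equal $r_\alpha(\{i,j\})$, so the cocycle identity holds and $g$ is well defined.

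Finally, I would upgrade the singleton version of condition (2) to arbitrary $I\subseteq E$ by induction on $|I|$. Given the result for $I-i$ with $i\in I$, compute
$$g(\alpha+e_I)=g(\alpha+e_{I-i})+r_{\alpha+e_{I-i}}(\{i\})=g(\alpha)+r_\alpha(I-i)+r_{\alpha+e_{I-i}}(\{i\}),$$
and one further application of Lemma \ref{lem:triangle} with $I-i\subseteq I$ collapses the last two rank terms into $r_\alpha(I)$, yielding condition (2).

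The only substantive step is the $2$-cell cocycle identity, and this is exactly what Lemma \ref{lem:triangle} delivers. In that sense the result is essentially a repackaging of Lemma \ref{lem:triangle}: the matroid-flock axioms (through Lemma \ref{lem:mf} and hence Lemma \ref{lem:triangle}) are precisely what is needed to guarantee that the natural path-integration of the rank increments $r_\alpha(\{i\})$ is path-independent.
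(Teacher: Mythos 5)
Your argument is correct and takes a genuinely different route from the paper's. The paper works with a directed graph whose arcs are $(\alpha,\alpha+e_I)$ for \emph{arbitrary} nonempty $I\subseteq E$, with length $r_\alpha(I)$; it proves that every closed walk has zero length by a minimal-counterexample argument (fixing an $i$ that changes in the first step, rerouting diagonal arcs through $\alpha+e_i$ using two instances of Lemma \ref{lem:triangle}, and then contracting an $i$-only initial segment to shorten the walk). You instead pass to the finer $1$-skeleton of the cubical lattice (only $\pm e_i$ steps), reduce well-definedness to the vanishing of the increment around each elementary $2$-cell, verify that cocycle identity by two applications of Lemma \ref{lem:triangle} with $J=\{i,j\}$, and then reinstate condition (2) for arbitrary $I$ by a separate induction. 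Both routes ultimately rest on Lemma \ref{lem:triangle}, but your decomposition makes it clearer that what is really being used is the discrete simple-connectedness of $\Z^E$, while the paper's proof is self-contained and avoids invoking the (standard but not re-proved there) fact that a $2$-cell cocycle on the cubical lattice is a coboundary. If you wanted your version to be as self-contained as the paper's, you would need to add a paragraph justifying that step — e.g. by the usual reorder-and-cancel argument on closed walks, where adjacent transpositions of steps are handled by the $2$-cell identity and adjacent backtracking pairs cancel — but the mathematical content is sound.
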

\proof
Let $D=\left(\Z^E,A\right)$ be the infinite directed graph with  arcs
$$A:=\{(\alpha, \alpha+e_I): ~\alpha\in \Z^E, ~\emptyset\neq I\subseteq E\}.$$
Let $l:A\rightarrow \Z$ be a length function on the arcs determined by $l(\alpha, \alpha+e_I)=r_{\alpha}(I)$.
This length function extends to the undirected walks $W=(\alpha^0,\ldots,\alpha^k)$ of $D$ in the usual way, by setting
$$l(W):=\left\{\begin{array}{ll} l(\alpha^0,\ldots,\alpha^{k-1})+l(\alpha^{k-1}, \alpha^k)&\text{if } (\alpha^{k-1},\alpha^{k})\in A\\
l(\alpha^0,\ldots,\alpha^{k-1})-l(\alpha^{k}, \alpha^{k-1})&\text{if } (\alpha^k, \alpha^{k-1})\in A
\end{array}\right.$$
if $k>0$, and $l(W)=0$ otherwise. A walk $(\alpha^0,\ldots, \alpha^k)$ is {\em closed} if it starts and ends in the same vertex, i.e. if $\alpha^0=\alpha^k$.

If we assume that $l(W)=0$ for each closed walk $W$, then we can construct a function $g$ satisfying (1) and (2) as follows. For each $\alpha\in \Z^E$, let $W^\alpha$ be an arbitrary walk from $0$ to $\alpha$, and put $g(\alpha)=l(W^\alpha)$. Then $g(0)=l(W^0)=0$ by our assumption, since $W^0$ is a walk from $0$ to $0$. Also, if $\alpha\in \Z^E$ and $I\subseteq E$, then writing $\beta:=\alpha+e_I$ we have
$$l(W^\alpha)+l(\alpha, \beta)-l(W^\beta)=l(\alpha^0,\ldots, \alpha^k, \beta^m,\ldots, \beta^0)=0,$$
by our assumption, where $W^\alpha=(\alpha^0,\ldots, \alpha^k)$ and $W^\beta=(\beta^0,\ldots, \beta^m)$. It follows that $$g(\alpha+e_I)=l(W^\beta)=l(W^\alpha)+l(\alpha, \beta)=g(\alpha)+r_{\alpha}(I),$$
as required. So to prove the lemma, it will suffice to show that $l(W)=0$ for each closed walk $W$.

Suppose for a contradiction that $W=(\alpha^0,\ldots, \alpha^k)$ is a closed walk with $l(W)\neq 0$. Fix any $i\in E$ so that $\alpha^0_i\neq \alpha^1_i$. If $J\subseteq E$ is such that $i\in J$, then for any $\alpha\in \Z^E$ we have
\begin{equation}\label{eq:triangle}l(\alpha, \alpha+e_J)=r_{\alpha}(J)=r_{\alpha}(i)+r_{\alpha+e_i}(J-i)=l(\alpha, \alpha+e_i, \alpha+e_J)\end{equation}
by applying Lemma \ref{lem:triangle} with $I=\{i\}$. Hence, if we replace each subsequence
$(\alpha^{t-1},\alpha^t)=(\alpha, \alpha+e_J)$ of $W$ with $i\in J$ by  $(\alpha, \alpha+e_i, \alpha+e_J)$, and each subsequence $(\alpha^{t-1},\alpha^t)=(\alpha+e_J, \alpha)$ with $i\in J$ by $(\alpha+e_J, \alpha+e_i, \alpha)$, then we obtain a closed walk $U=(\beta^0,\ldots, \beta^m)$ with $l(U)=l(W)\neq 0$,  such that if $\beta^t-\beta^{t-1}=\pm e_J$, then $i\not\in J$ or $J=\{i\}$, and moreover such that $\beta^t-\beta^{t-1}=\pm e_i$ for some $t$. Pick such $U,i$ with $m$ as small as possible, and minimizing $|U|_i:=\sum\{t\in \{1,\ldots, m\}:  \beta^t_i\neq \beta^{t-1}_i\}.$

We claim that there is no $t>0$ so that $\beta^{t-1}_i=\beta^{t}_i\neq \beta^{t+1}_i$. Consider that by applying Lemma \ref{lem:triangle} with $I=J-i$, we have
\begin{equation}\label{eq:triangle2}l(\alpha, \alpha+e_J)=r_{\alpha}(J)=r_{\alpha}(J-i)+r_{\alpha+e_{J-i}}(i)=l(\alpha, \alpha+e_{J-i}, \alpha+e_J), \end{equation}
so that using \eqref{eq:triangle} we obtain $l(\alpha, \alpha+e_{J-i}, \alpha+e_J,\alpha+e_i,\alpha)=0$. Hence, $l(\alpha, \alpha+e_{J-i}, \alpha+e_J)=l(\alpha, \alpha+e_i, \alpha+e_J)$ and
 $l(\alpha+e_{J-i}, \alpha, \alpha+e_i)=l(\alpha+e_{J-i}, \alpha+e_J, \alpha+e_i)$. It follows that
 any subsequence $(\beta^{t-1}, \beta^{t}, \beta^{t+1})$ of $U$ with $\beta^{t-1}_i=\beta^{t}_i\neq \beta^{t+1}_i$ can be rerouted to $(\beta^{t-1}, \beta', \beta^{t+1})$ with $\beta^{t-1}_i\neq \beta'_i=\beta^{t+1}_i$, which would result in a closed walk $U'$ with $|U'|_i<|U|_i$, a contradiction.

 So there exists an $m'\in \{1,\ldots, m\}$ such that $\beta^t-\beta^{t-1}=\pm e_i$ if and only if $t\leq m'$. Then $\beta^0=\beta^{m'}=\beta^m$, and $l(\beta^0,\ldots, \beta^{m'})=0$. Hence
 $$l(\beta^{m'},\ldots, \beta^{m})= l(\beta^0,\ldots, \beta^{m'})+l(\beta^{m'},\ldots, \beta^{m})=l(U)\neq 0,$$ which contradicts the minimality of $m$.
\endproof

For any matroid flock $M$, let $g^M$ denote the unique function $g$  from Lemma \ref{lem:mf_g}.
\begin{theorem} \label{thm:Lconvex}Let $M$ be a matroid flock of rank $d$ on $E$, and let $g=g^M$. Then
\begin{enumerate}
  \item  $g(\alpha) + g(\beta) \geq g(\alpha \vee \beta) + g(\alpha \wedge \beta)$ for all $\alpha,\beta \in \Z^E$; and
  \item $g(\alpha + \one) = g(\alpha) + d$ for all $\alpha\in \Z^E$.
\end{enumerate}
\end{theorem}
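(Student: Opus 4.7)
The plan is to dispatch (2) immediately and then prove (1) by a short induction that uses only Lemma~\ref{lem:mf_g}(2) and Lemma~\ref{lem:rank}.

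For (2), applying the defining property (2) of $g$ from Lemma~\ref{lem:mf_g} with $I = E$ gives
\[ g(\alpha + \one) \;=\; g(\alpha + e_E) \;=\; g(\alpha) + r_\alpha(E) \;=\; g(\alpha) + d, \]
since $M_\alpha$ has rank $d$.

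For (1), I would normalize by setting $\gamma := \alpha \wedge \beta$ and writing $a := \alpha - \gamma$, $b := \beta - \gamma$. Then $a, b \geq 0$, and for each $i \in E$ at least one of $a_i, b_i$ is $0$, so $\supp(a) \cap \supp(b) = \emptyset$. The desired inequality becomes
\[ g(\gamma + a) + g(\gamma + b) \;\geq\; g(\gamma) + g(\gamma + a + b). \]
I would prove this by induction on $|a|_1 = \sum_i a_i$. The base case $|a|_1 = 0$ gives $a = 0$ and both sides equal $g(\gamma) + g(\gamma + b)$.

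For the inductive step, pick $i$ with $a_i \geq 1$ and set $a' := a - e_i$. Lemma~\ref{lem:mf_g}(2) gives
\[ g(\gamma + a) - g(\gamma + a') \;=\; r_{\gamma + a'}(\{i\}), \qquad g(\gamma + a + b) - g(\gamma + a' + b) \;=\; r_{\gamma + a' + b}(\{i\}). \]
Since $i \in \supp(a)$ and $\supp(a) \cap \supp(b) = \emptyset$, we have $i \notin \supp(b) = \supp((\gamma + a' + b) - (\gamma + a'))$, and clearly $\gamma + a' \leq \gamma + a' + b$. Hence Lemma~\ref{lem:rank} yields $r_{\gamma + a'}(\{i\}) \geq r_{\gamma + a' + b}(\{i\})$. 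Combining this with the induction hypothesis applied to $a'$ and $b$ gives the required inequality.

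The one step to watch is the rank comparison $r_{\gamma+a'}(\{i\}) \geq r_{\gamma+a'+b}(\{i\})$: everything hinges on arranging that the coordinate $i$ along which one steps lies outside the support of the shift $b$, which is exactly why reducing to $\gamma = \alpha \wedge \beta$ (so that $\supp(a)$ and $\supp(b)$ are disjoint) is the right normalization.
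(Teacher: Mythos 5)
Your proof is correct and takes essentially the same approach as the paper: both reduce to comparing rank increments via Lemma~\ref{lem:mf_g}(2) and Lemma~\ref{lem:rank} along the path from $\alpha\wedge\beta$, shifting by the complementary increment with disjoint support. The only cosmetic difference is that you telescope one coordinate at a time via induction on $|a|_1$, whereas the paper telescopes using a chain of nested index sets $I_1\subseteq\cdots\subseteq I_k$ decomposing $(\beta-\alpha)\vee 0$; the substance is identical.
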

\proof We first show (1).  Let $\alpha, \beta\in \Z^E$. Since $(\beta - \alpha) \vee 0\geq 0$, there are $I_1 \subseteq \ldots \subseteq I_k \subseteq E$ so that $(\beta - \alpha) \vee 0 = \sum_{j=1}^k e_{I_j}$. Let $\gamma(t) := \alpha\wedge \beta+ \sum_{j=1}^t e_{I_j}$. Then
$\gamma(0)=\alpha\wedge \beta$, $\gamma(k)=\alpha\wedge \beta+ (\beta - \alpha) \vee 0= \beta$, and
$\gamma(t)=\gamma(t-1)+e_{I_t}$, so that
$$g(\beta)-g(\alpha\wedge\beta)=\sum_{t=1}^k g( \gamma(t)) - g(\gamma(t-1))= \sum_{t=1}^{k} r_{\gamma(t-1)}(I_t).$$
Let $\delta:= (\alpha-\beta)\vee 0$. Then $\gamma(0)+\delta=\alpha$ and $\gamma(k)+\delta=\alpha\vee \beta$, and we also have
$$g(\alpha\vee \beta)-g(\alpha)=\sum_{t=1}^k g( \gamma(t)+\delta) - g(\gamma(t-1)+\delta)= \sum_{t=1}^k r_{\gamma(t-1)+\delta}(I_t).$$
For each $t$ we have $I_t\cap \supp(\delta)\subseteq \supp((\beta-\alpha)\vee 0)\cap \supp((\alpha-\beta)\vee 0)=\emptyset$, and $\delta\geq 0$. By Lemma \ref{lem:rank}, it follows that $r_{\gamma(t-1)}(I_t)\geq r_{\gamma(t-1)+\delta}(I_t)$ for each $t$, and hence
$$g(\beta)-g(\alpha\wedge\beta)= \sum_{t=1}^k r_{\gamma(t)}(I_t)\geq \sum_{t=1}^k r_{\gamma(t)+\delta}(I_t)=g(\alpha\vee \beta)-g(\alpha),$$
which implies (1).

 To see (2), note that $g(\alpha+\one)=g(\alpha)+r_{\alpha}(E)=g(\alpha)+d$.
\endproof
It follows that for any matroid flock $M$, the function $g^M$ is L-convex in the sense of Murota.
\begin{lemma}\label{lem:dual}Let $M$ be a matroid flock on $E$, let $g=g^M$ and $f:=g^\bullet$, and let $\alpha, \omega\in \Z^E$. The following are equivalent.
\begin{enumerate}
\item $\omega^T\alpha=f(\omega)+g(\alpha)$; and
\item $\omega=e_B$ for some basis $B$ of $M_\alpha$.
\end{enumerate}
\end{lemma}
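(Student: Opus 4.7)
The plan is to translate the Legendre--Fenchel identity into a minimization problem and then apply Murota's local optimality criterion.

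By the definition $f(\omega) = \sup_{y \in \Z^E}(\omega^T y - g(y))$, condition (1) is exactly the statement that $\alpha$ is a global minimizer of $G(y) := g(y) - \omega^T y$. Since $g$ is L-convex by Theorem \ref{thm:Lconvex}, and since subtracting a linear functional preserves both the submodular inequality $G(x)+G(y) \geq G(x \vee y) + G(x\wedge y)$ and the linear-growth condition along $\one$ (here $G(\alpha+\one) = G(\alpha) + (d - \omega^T\one)$), the tilted function $G$ is again L-convex. Lemma \ref{thm:Llocalopt} then says that $\alpha$ minimizes $G$ if and only if (a) $G(\alpha) \leq G(\alpha+e_I)$ for every $I \subseteq E$ and (b) $G(\alpha) = G(\alpha+\one)$. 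Using Lemma \ref{lem:mf_g} to evaluate $g(\alpha+e_I)-g(\alpha) = r_\alpha(I)$ and $g(\alpha+\one)-g(\alpha) = d$, these rewrite as (a') $\sum_{i\in I}\omega_i \leq r_\alpha(I)$ for all $I \subseteq E$, and (b') $\sum_{i\in E}\omega_i = d$.

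It remains to show that (a') and (b') together are equivalent to $\omega = e_B$ for some basis $B$ of $M_\alpha$. One direction is direct: if $\omega = e_B$ with $B$ a basis, then $\sum_i \omega_i = |B| = d$ gives (b'), and $\sum_{i\in I}\omega_i = |I \cap B| \leq r_\alpha(I)$ (since $I \cap B$ is independent in $M_\alpha$) gives (a'). For the converse, first I would pin down $\omega \in \{0,1\}^E$: applying (a') to $I = \{i\}$ yields $\omega_i \leq r_\alpha(\{i\}) \leq 1$, and applying (a') to $I = E \setminus \{i\}$ together with (b') yields $d - \omega_i \leq r_\alpha(E\setminus\{i\}) \leq d$, hence $\omega_i \geq 0$. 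Thus $\omega = e_B$ with $B := \supp(\omega)$, and (b') gives $|B| = d$. A final application of (a') to $I = B$ gives $d = |B| \leq r_\alpha(B) \leq d$, so $r_\alpha(B) = |B| = d$ and $B$ is a basis of $M_\alpha$.

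The argument is mostly bookkeeping: the conceptual step is recognizing that the objective in the Fenchel transform is L-convex so that Murota's local optimality applies, and the main (mild) obstacle is carefully translating the two local conditions from Lemma \ref{thm:Llocalopt} into the matroidal inequalities (a') and (b'), from which the basis characterization drops out by considering singletons and their complements.
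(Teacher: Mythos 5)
Your proof is correct and follows the same overall strategy as the paper's: translate condition (1) into ``$\alpha$ globally minimizes the tilted function $G(\beta)=g(\beta)-\omega^T\beta$'' and then apply Murota's local optimality criterion for L-convex functions. The one noteworthy divergence is in your $(1)\Rightarrow(2)$ direction: the paper checks the decrements $G(\alpha-e_i)-G(\alpha)$ and $G(\alpha-e_{\overline B})-G(\alpha)$ and then invokes Lemma \ref{lem:mf} to conclude $|B|=d$, whereas you extract everything from the two increments in Lemma \ref{thm:Llocalopt} alone, obtaining $\omega_i\geq 0$ from $I=E\setminus\{i\}$ together with $\sum_i\omega_i=d$, and the basis property from $I=B$; this is a modest but genuine streamlining that avoids the appeal to Lemma \ref{lem:mf} in this step.
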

\proof We first show that (1) implies (2). So assume that $\omega^T\alpha=f(\omega)+g(\alpha)$.
Then $f(\omega)$ is finite, as $g(\alpha)$ and $\omega^T\alpha$ are both finite. Since $f=g^\bullet$, we have
$$\omega^T\alpha-g(\alpha)=f(\omega)=\sup\{\omega^T\beta-g(\beta): \beta\in \Z^E\},$$
and hence $\alpha$ minimizes the function $G: \beta\mapsto g(\beta)-\omega^T\beta$ over all $\beta\in\Z^E$.
Since
$$0\leq G(\alpha-e_i)-G(\alpha)=g(\alpha-e_i)-\omega^T(\alpha-e_i)-g(\alpha)+\omega^T\alpha=-r_{\alpha-e_i}(i)+ \omega_i$$
for each $i\in E$, it follows that $\omega\geq 0$.
Since
$$0\leq G(\alpha+e_i)-G(\alpha)=g(\alpha+e_i)-\omega^T(\alpha+e_i)-g(\alpha)+\omega^T\alpha=r_\alpha(i)-\omega_i,$$
we have $\omega\leq \one$. Hence $\omega=e_B$ for some $B\subseteq E$.
Then
$$0\leq G(\alpha+e_B)-G(\alpha)=g(\alpha+e_B)-\omega^T(\alpha+e_B)-g(\alpha)+\omega^T\alpha=r_{\alpha}(B)-|B|,$$
so that $r_{\alpha}(B)=|B|$.
Moreover,
$$0\leq G(\alpha-e_{\overline{B}})-G(\alpha)=g(\alpha-e_{\overline{B}})-\omega^T(\alpha-e_{\overline{B}})-g(\alpha)+\omega^T\alpha=-r_{\alpha-e_{\overline{B}}}(\overline{B}),$$
so that $r_{\alpha-e_{\overline{B}}}(\overline{B})=0$.
It follows by Lemma \ref{lem:mf} that
$$|B|=r_\alpha(B)=r(M_\alpha\del{\overline{B}})=r(M_{\alpha-e_{\overline{B}}}/\overline{B})=d-r_{\alpha-e_{\overline{B}}}(\overline{B})=d,$$
and hence  that $B$ is a basis of $M_\alpha$.

We now show that (2) implies (1). Suppose $\omega=e_B$ for some basis $B$ of $M_\alpha$. Consider again the function $G: \alpha\mapsto g(\alpha)-\omega^T\alpha$ over $\Z^E$. As $g$ is L-convex, $G$ is L-convex. We show that $\alpha$ minimizes $G$ over $\Z^E$, using the optimality condition for L-convex functions given in Lemma \ref{thm:Llocalopt}. First, note that as $g(\alpha+\one)=g(\alpha)+d$, we have
$$G(\alpha+\one)=g(\alpha+\one)-\omega^T(\alpha+\one) =g(\alpha)+d - \omega^T\alpha-|B|= G(\alpha).$$
Let $I\subseteq E$. As $B$ is a basis of $M_\alpha$, we have $|B\cap I|\leq r_\alpha(I)$, and hence
$$G(\alpha+e_I)-G(\alpha)=g(\alpha+e_I)-\omega^T(\alpha+e_I)-g(\alpha)-\omega^T\alpha=r_\alpha(I) -|B\cap I|\geq 0.$$
 Thus $\alpha$ minimizes $G$ over $\Z^E$, hence $f(\omega)=\sup\{-G(\alpha):\alpha\in \Z^E\}=\omega^T\alpha-g(\alpha)$, as required.
 \endproof

Let $M$ be a matroid flock on $E$ of rank $d$. We define the function $\nu^M:\binom{E}{d}\rightarrow \Zinf$ by setting $\nu^M(B):=f(e_B)$  for each $B\in \binom{E}{r}$, where $f=g^\bullet$ is the Lagrange-Fenchel dual of $g=g^M$.
\begin{lemma}\label{lem:nuM}Let $M$ be a matroid flock, and let $\nu=\nu^M$. Then $\nu$ is a valuation,   and $M^\nu_\alpha=M_\alpha$ for all $\alpha\in\Z^E$.\end{lemma}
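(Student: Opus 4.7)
The plan is to exploit the M/L-convex duality established in Theorem \ref{thm:MLdual}. Since $g := g^M$ is L-convex by Theorem \ref{thm:Lconvex}, Theorem \ref{thm:MLdual} says $f := g^\bullet$ is M-convex and that moreover $g = f^\bullet$. The values $\nu(B) = f(e_B)$ are thus constrained by the M-exchange property of $f$, while Lemma \ref{lem:dual} already describes the bases of $M_\alpha$ as precisely those $B$ for which $e_B$ maximizes $\omega \mapsto \omega^T \alpha - f(\omega)$. Both assertions follow by stitching these two facts together.

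For the valuation axioms: to verify (V1), I would take $\alpha = 0$ and pick any basis $B$ of $M_0$ (which exists because $M_0$ has rank $d$); Lemma \ref{lem:dual} then gives $0 = e_B^T \cdot 0 = f(e_B) + g(0) = f(e_B)$, so $\nu(B) = 0 \in \Z$. For (V2), given $B, B' \in \binom{E}{d}$ with $\nu(B),\nu(B') < \infty$ and $i \in B \setminus B'$, note that $e_B, e_{B'} \in \dom(f)$ and that $\supp^+(e_B - e_{B'}) = B \setminus B'$, $\supp^-(e_B - e_{B'}) = B' \setminus B$. M-convexity of $f$ then supplies some $j \in B' \setminus B$ with
$$f(e_B) + f(e_{B'}) \geq f(e_B - e_i + e_j) + f(e_{B'} + e_i - e_j),$$
which, since $e_B - e_i + e_j = e_{B-i+j}$ and similarly for the other term, is exactly the required exchange inequality for $\nu$.

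For the identity $M^\nu_\alpha = M_\alpha$, the key intermediate step is to show $g^\nu(\alpha) = g(\alpha)$ for every $\alpha \in \Z^E$. The inequality $g^\nu(\alpha) \leq g(\alpha)$ is immediate: $g = f^\bullet$ is the supremum of $\omega^T \alpha - f(\omega)$ over all $\omega \in \Z^E$, whereas $g^\nu(\alpha)$ is the same expression with $\omega$ restricted to characteristic vectors $e_B$, $B \in \binom{E}{d}$. For the reverse inequality, pick any basis $B$ of $M_\alpha$; Lemma \ref{lem:dual} gives $e_B^T \alpha - f(e_B) = g(\alpha)$, certifying $g^\nu(\alpha) \geq g(\alpha)$. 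Once $g^\nu = g$ is in place, unfolding definitions yields $B \in \BB^\nu_\alpha$ iff $e_B^T\alpha - \nu(B) = g(\alpha)$, iff (by Lemma \ref{lem:dual} once more) $B$ is a basis of $M_\alpha$.

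The only nontrivial point is verifying that the supremum defining $g^\nu$, restricted to $0/1$-vectors of weight $d$, actually equals the unrestricted Fenchel supremum defining $g = f^\bullet$; this is where Lemma \ref{lem:dual} is indispensable, since it guarantees the unrestricted maximum is attained on a vector $e_B$ of this form. Everything else is bookkeeping around Theorem \ref{thm:MLdual} and Lemma \ref{lem:dual}.
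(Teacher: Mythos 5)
Your proof is correct and follows exactly the same route as the paper: L-convexity of $g^M$ (Theorem \ref{thm:Lconvex}), duality with the M-convex $f=g^\bullet$ (Theorem \ref{thm:MLdual}), and Lemma \ref{lem:dual} to identify the maximizers of $\omega\mapsto\omega^T\alpha-f(\omega)$ with characteristic vectors of bases of $M_\alpha$. The paper merely compresses what you spell out — in particular it labels the verification that $B\mapsto f(e_B)$ satisfies (V1)--(V2) as ``straightforward from M-convexity,'' which is precisely the $\supp^\pm$ computation you give, and it asserts $g=g^\nu$ in one step where you split it into two inequalities.
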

\proof Suppose $M$ is a matroid flock. Then $g=g^M$ is L-convex by Theorem \ref{thm:Lconvex}, and $f:=g^\bullet$ is M-convex by Theorem \ref{thm:MLdual}. That $\nu:B\mapsto f(e_B)$ is a matroid valuation is straightforward from the fact that $f$ is M-convex.   We show that $M^\nu_\alpha=M_\alpha$ for all $\alpha\in\Z^E$.  By Theorem \ref{thm:MLdual}, we have $g=f^\bullet$.  By Lemma \ref{lem:dual}, we have
$$g(\alpha)=f^\bullet(\alpha)=\sup\{\omega^T\alpha-f(\omega): \omega\in \Z^E\}=
\sup\left\{e_{B'}^T\alpha-\nu(B'): B'\in\binom{E}{r}\right\},$$
as the first supremum is attained by $\omega$ only if $\omega=e_{B'}$ for some $B'\in\binom{E}{r}$.
Again by Lemma \ref{lem:dual}, $B$ is a basis of $M_\alpha$ if and only if $g(\alpha)=e_B^T\alpha-\nu(B)$, i.e. if $B$ is a basis of $M^\nu_\alpha$.
\endproof
This proves the implication (1)$\Rightarrow$(2) of Theorem \ref{thm:mf_char}. Finally, we note:
\begin{lemma}Let $M$ be a matroid flock, and let $\nu=\nu^M$. Then $g^M=g^\nu$.\end{lemma}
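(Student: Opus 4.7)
The plan is to derive this identity directly from the Fenchel duality used to construct $\nu^M$ and from Lemma \ref{lem:dual}. Recall that by Theorem \ref{thm:Lconvex} the function $g := g^M$ is L-convex, so $f := g^\bullet$ is M-convex and, by Theorem \ref{thm:MLdual}, $g = f^\bullet$. By definition $\nu(B) = f(e_B)$ for each $B \in \binom{E}{d}$, and by definition
$$g^\nu(\alpha) = \sup\bigl\{ e_B^T\alpha - \nu(B) : B \in \tbinom{E}{d} \bigr\}.$$

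For the inequality $g^M(\alpha) \geq g^\nu(\alpha)$, I would simply unfold the Fenchel dual: for every $B \in \binom{E}{d}$,
$$g^M(\alpha) = f^\bullet(\alpha) = \sup_{\omega \in \Z^E}\bigl(\omega^T\alpha - f(\omega)\bigr) \;\geq\; e_B^T\alpha - f(e_B) = e_B^T\alpha - \nu(B),$$
and taking the supremum over $B$ gives the desired inequality.

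For the reverse inequality, I would apply Lemma \ref{lem:dual}: since $M_\alpha$ is a matroid of rank $d$, it has at least one basis $B \in \binom{E}{d}$, and by (2)$\Rightarrow$(1) of that lemma, $\omega := e_B$ achieves equality $\omega^T\alpha = f(\omega) + g(\alpha)$. Hence
$$g^M(\alpha) = e_B^T\alpha - f(e_B) = e_B^T\alpha - \nu(B) \;\leq\; g^\nu(\alpha).$$
Combining the two inequalities yields $g^M(\alpha) = g^\nu(\alpha)$ for all $\alpha \in \Z^E$.

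There is no real obstacle here; the argument is essentially a repackaging of the display already appearing inside the proof of Lemma \ref{lem:nuM} (the step where the supremum over $\omega \in \Z^E$ is restricted to the $e_B$ with $B \in \binom{E}{d}$), but it is worth stating as a separate lemma because the identity $g^M = g^\nu$ will be invoked in its own right when comparing flocks to their valuations later in the paper.
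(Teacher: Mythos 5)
Your proof is correct and follows the same route as the paper's: the key step in both is that, by Lemma \ref{lem:dual}, the supremum defining $g^M(\alpha)=f^\bullet(\alpha)$ (with $f=(g^M)^\bullet$) is attained at $\omega=e_B$ for a basis $B$ of $M_\alpha$, so restricting the supremum to the indicator vectors of $d$-sets loses nothing. The paper's version is terser — it introduces an auxiliary function equal to $\nu(B)$ on each $e_B$ and $\infty$ elsewhere, and writes $g^M=f^\bullet=g^\nu$ in one line, implicitly invoking the displayed identity already established in the proof of Lemma \ref{lem:nuM} — whereas you unpack that identity into the two inequalities it comprises; the content is the same.
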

\proof Let $f:\Z^E\rightarrow\Zinf$ be defined by $f(e_B)=\nu(B)$ for all $B\in \binom{E}{d}$, and $=\infty$ otherwise. Then $g^M=f^\bullet=g^\nu$, as required.\endproof

\subsection{The support matroid and the cells of a matroid
valuation} \label{sec:Cells}

If $M$ is a matroid flock, then the {\em support matroid of $M$} is just the support matroid $M^\nu$ of the associated valuation $\nu=\nu^M$.
\begin{lemma} \label{lem:supp} Suppose $M:\alpha\mapsto M_\alpha=(E,\BB_\alpha)$ is a matroid flock with support matroid $N=(E, \BB)$. Then $\BB=\bigcup_{\alpha\in \Z^E}\BB_\alpha$.
\end{lemma}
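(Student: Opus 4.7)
The plan is to use Lemma~\ref{lem:nuM}, which gives $\BB_\alpha = \BB^\nu_\alpha$ and allows us to work entirely with the valuation $\nu=\nu^M$, whose support matroid is $N$ (so $\BB=\BB^\nu=\{B\in\binom{E}{d}:\nu(B)<\infty\}$). We then prove the two inclusions separately.

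For the inclusion $\bigcup_{\alpha}\BB_\alpha\subseteq\BB$, I would fix $\alpha\in\Z^E$ and $B\in\BB_\alpha$, so that
\[
e_B^T\alpha-\nu(B)=g^\nu(\alpha)=\sup\!\left\{e_{B'}^T\alpha-\nu(B'):B'\in\tbinom{E}{d}\right\}.
\]
By axiom (V1) for $\nu$, the set $\binom{E}{d}$ contains at least one $B'$ with $\nu(B')<\infty$, so $g^\nu(\alpha)>-\infty$; conversely, since $\binom{E}{d}$ is finite, the supremum is in fact a maximum over the $B'$ with finite $\nu$-value, so $g^\nu(\alpha)\in\Z$. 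Combined with $e_B^T\alpha\in\Z$, the defining equation forces $\nu(B)\in\Z$, i.e., $B\in\BB$.

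For the reverse inclusion $\BB\subseteq\bigcup_{\alpha}\BB_\alpha$, I would take an arbitrary $B\in\BB$ and construct an $\alpha$ that witnesses $B\in\BB_\alpha$ by making $B$ the unique maximizer of $B'\mapsto e_{B'}^T\alpha-\nu(B')$. The natural choice is $\alpha:=Ne_B$ for a large integer $N$. For any $B'\in\binom{E}{d}$ with $\nu(B')<\infty$, one computes
\[
\bigl(e_B^T\alpha-\nu(B)\bigr)-\bigl(e_{B'}^T\alpha-\nu(B')\bigr)=N\bigl(d-|B\cap B'|\bigr)+\nu(B')-\nu(B),
\]
which for $B'\neq B$ has $d-|B\cap B'|\geq 1$. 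Since $\BB$ is finite and $\nu$ takes integer values on $\BB$, the quantity $\max\{\nu(B)-\nu(B'):B'\in\BB,\,B'\neq B\}$ is a finite integer; choosing $N$ strictly larger than this bound makes the displayed expression positive for every $B'\neq B$ in $\BB$, so $B$ attains $g^\nu(\alpha)$ and therefore $B\in\BB^\nu_\alpha=\BB_\alpha$.

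I do not expect a substantive obstacle here: the content reduces to the observation that $\binom{E}{d}$ is finite and that finite values of $\nu$ are integers, so a single Frobenius-direction shift $\alpha=Ne_B$ with $N$ large enough isolates $B$ as the unique maximizer. The only point requiring a little care is ensuring $g^\nu(\alpha)$ is truly finite in both directions, which is handled by (V1) and finiteness of $\binom{E}{d}$.
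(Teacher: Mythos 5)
Your proof is correct and takes essentially the same approach as the paper's: both reduce to the valuation $\nu=\nu^M$ via Lemma~\ref{lem:nuM}, handle $\nu(B)=\infty$ directly, and obtain the reverse inclusion by pushing $\alpha$ in the $e_B$-direction until $B$ becomes the maximizer of $B'\mapsto e_{B'}^T\alpha-\nu(B')$. The only cosmetic difference is that the paper organizes the argument as a descent (the nonnegative quantity $h(\alpha)=g^\nu(\alpha)-e_B^T\alpha+\nu(B)$ drops by at least $1$ each time one adds $e_B$ while $B\notin\BB_\alpha$), whereas you explicitly pick $\alpha=Ne_B$ with $N$ large enough in one step; these are two implementations of the same idea. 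One tiny point worth making explicit: you should take $N$ larger than both $0$ and $\max\{\nu(B)-\nu(B'):B'\in\BB,\ B'\ne B\}$, since $N(d-|B\cap B'|)\ge N$ only uses $d-|B\cap B'|\ge 1$ together with $N\ge 0$.
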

\proof Let $\nu=\nu^M$, and put $g=g^\nu$. By Lemma \ref{lem:nuM}, we have
$$\BB_\alpha=\BB^\nu_\alpha=\left\{B\in \binom{E}{d}: e_B^T\alpha-\nu(B)=g(\alpha)\right\}$$
for all $\alpha\in\Z^E$, and $\BB=\{B\in\binom{E}{d}: \nu(B)<\infty\}$.
Since $\nu(B')<\infty$ for some $B'\in \binom{E}{d}$ by (V1), we have  $g(\alpha)>-\infty$ for all $\alpha\in \Z^E$. Consider a $B\in \binom{E}{d}$.

Suppose first that  $B\in \BB$, i.e. $\nu(B)<\infty$. Consider  the difference $h(\alpha):=g(\alpha)-e^T_B\alpha+\nu(B)$. Then $h(\alpha)$ is nonnegative and finite for all $\alpha\in \Z^E$, and $B\in \BB_\alpha$ if and only if $h(\alpha)=0$. Moreover, if $B$ is not a basis of $M^\nu_\alpha$, then $g(\alpha+e_B)\leq g(\alpha)+|B|-1$ and $e^T_B(\alpha+e_B)=e^T_B\alpha+|B|$, so that
$h(\alpha+e_B)\leq h(\alpha)-1$. It follows that for any fixed $\alpha$ and any sufficiently large $k\in \Z$, we have  $h(\alpha+ke_B)=0$, and then $B\in \BB_{\alpha+ke_B}$. Then $B\in \bigcup_{\alpha\in \Z^E}\BB_\alpha$.

If on the other hand $B\not\in \BB^\nu$, i.e. $\nu(B)=\infty$, then
$e^T_B\alpha-\nu(B)=-\infty<g(\alpha)$ for all $\alpha\in \Z^E$, so that $B\not\in \BB_\alpha$ for any $\alpha\in \Z^E$. Then $B\not\in\bigcup_{\alpha\in \Z^E}\BB_\alpha$.
\endproof
The geometry of valuations is quite intricate, and is studied in much greater detail in tropical geometry \cite{Speyer2008, Hampe2015}. We mention only a few results we need in this paper.
 For any matroid valuation $\nu:\binom{E}{d}\rightarrow \Rinf$, put
$C^\nu_\beta:=\{\alpha\in \R^E: \BB^\nu_\alpha\supseteq \BB^\nu_\beta\}.$
\begin{lemma}\label{lem:poly} Let $\nu:\binom{E}{d}\rightarrow \Zinf$ be a matroid valuation,  and let $\beta\in\R^E$. Then $$C^\nu_\beta=\{\alpha\in \R^E: \alpha_i-\alpha_j\geq \nu(B)-\nu(B')\text{ for all } B\in \BB^\nu_\beta, ~B'\in \BB^\nu\text{ s.t. }B'=B-i+j\}.$$
\end{lemma}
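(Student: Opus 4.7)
\medskip
\noindent\textbf{Proof plan.}
The inclusion $\subseteq$ is essentially by definition. If $\alpha\in C^\nu_\beta$ and $B\in\BB^\nu_\beta$, then $B\in\BB^\nu_\alpha$ as well, so
$$e_B^T\alpha - \nu(B) \;\geq\; e_{B'}^T\alpha - \nu(B')$$
for every $B'\in\binom{E}{d}$; specializing to $B'=B-i+j\in\BB^\nu$ (with $i\in B$, $j\notin B$) this rearranges to exactly $\alpha_i-\alpha_j\geq \nu(B)-\nu(B')$.

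For the reverse inclusion $\supseteq$, suppose $\alpha$ satisfies the single-swap inequalities on the right-hand side. Fix $B\in\BB^\nu_\beta$; my task is to show $B\in\BB^\nu_\alpha$, i.e.\ that $B$ maximizes $B'\mapsto e_{B'}^T\alpha-\nu(B')$ over $\binom{E}{d}$ (values with $\nu(B')=\infty$ being automatic). This is a classical \emph{local-to-global} optimality assertion for valuated matroids, and I plan to argue by contradiction using (V2). Assume some $B'\in\BB^\nu$ strictly beats $B$, and choose such a $B'$ with $|B\setminus B'|$ minimal. Since $B\neq B'$, pick $i\in B\setminus B'$. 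Apply (V2) to $B,B',i$ to obtain $j\in B'\setminus B$ with
$$\nu(B)+\nu(B')\;\geq\;\nu(B-i+j)+\nu(B'+i-j).$$
Set $B_1:=B-i+j$ and $B_1':=B'+i-j$; both lie in $\BB^\nu$ since the right-hand side above is finite, and $e_{B_1}+e_{B_1'}=e_B+e_{B'}$.

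The key step is to combine three inputs: (a) the identity $e_{B_1}^T\alpha+e_{B_1'}^T\alpha=e_B^T\alpha+e_{B'}^T\alpha$; (b) the valuation inequality above; and (c) the hypothesis applied to $B\in\BB^\nu_\beta$ and the single swap $B_1=B-i+j\in\BB^\nu$, which gives $e_B^T\alpha-\nu(B)\geq e_{B_1}^T\alpha-\nu(B_1)$. Adding (a) and (b) yields
$$(e_{B_1}^T\alpha-\nu(B_1))+(e_{B_1'}^T\alpha-\nu(B_1'))\;\geq\;(e_B^T\alpha-\nu(B))+(e_{B'}^T\alpha-\nu(B')),$$
and subtracting (c) then gives
$$e_{B_1'}^T\alpha-\nu(B_1')\;\geq\;e_{B'}^T\alpha-\nu(B')\;>\;e_B^T\alpha-\nu(B).$$
Since $|B\triangle B_1'|=|B\triangle B'|-2$, this contradicts the minimal choice of $B'$. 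Hence no strictly better $B'$ exists, so $B\in\BB^\nu_\alpha$, and since $B\in\BB^\nu_\beta$ was arbitrary, $\alpha\in C^\nu_\beta$.

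The only subtle point is that the hypothesis lets us swap only inside $B\in\BB^\nu_\beta$, not inside an intermediate basis, so a naive induction that decreases $|B\setminus B'|$ by modifying $B$ would fail. The minimum-counterexample argument above sidesteps this by modifying $B'$ instead (replacing it by $B_1'$) and leaving $B$ fixed; this is what makes the one-sided local condition enough.
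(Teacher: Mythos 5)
Your proof is correct and follows essentially the same route as the paper's: the paper also argues the nontrivial inclusion via a minimal-counterexample argument on $|B \setminus B'|$, applies (V2) to $B$ and $B'$, uses the single-swap bound to control the term $\nu(B-i+j)$, and concludes that the other exchange basis $B'+i-j$ is a strictly better (and closer) counterexample, a contradiction. The only cosmetic difference is that the paper works with the shifted valuation $\nu'(B)=\nu(B)-e_B^T\alpha$ and phrases it as a contrapositive, whereas you argue directly; the underlying manipulations are identical.
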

\proof Let $C$ denote the right-hand side polyhedron in the statement of the lemma. Directly from the definition of $\BB^\nu_\alpha$, it follows that $\BB^\nu_\alpha\supseteq \BB^\nu_\beta$ if and only if $$e^T_B\alpha-\nu(B)\geq e^T_{B'}\alpha-\nu(B')$$ for all $B\in\BB^\nu_\beta$ and $B'\in \BB^\nu$.
In particular, $C^\nu_\beta\subseteq C$.

To see that  $C^\nu_\beta\supseteq C$, suppose that  $\alpha\not\in C^\nu_\beta$, that is,  $\BB^\nu_\alpha\not\supseteq \BB^\nu_\beta$, so that
$$e^T_B\alpha-\nu(B)< e^T_{B'}\alpha-\nu(B')$$
for some $B\in\BB^\nu_\beta$ and $B'\in \BB^\nu$.
 Consider the valuation $\nu':B\mapsto \nu(B)-e^T_B\alpha$. Pick $B\in\BB^\nu_\beta, B'\in \BB^\nu$ such that $\nu'(B)>\nu'(B')$ with $B\del B'$ as small as possible.  If $|B\del B'|>1$,  then  by minimality of $|B\del B'|$ we have
$$\nu'(B)+\nu'(B)>\nu'(B)+\nu'(B')\geq \nu'(B-i+j)+\nu'(B'+i-j)\geq \nu'(B)+\nu'(B),$$
for some $i\in B\del B'$ and $j\in B'\del B$, since $\nu'$ is a valuation. This is a contradiction, so $|B\del B'|=1$ and $B'=B-i+j$, and hence
$$\alpha_i-\alpha_j= (e^T_B-e^T_{B'})\alpha<\nu(B)-\nu(B'),$$
so that $\alpha\not\in C$.
\endproof
Thus the cells $C^\nu_\beta$ are `alcoved polytopes' (see \cite{LamPostnikov2007}). The relative interior of such cells is connected also in a discrete sense.
\begin{lemma} \label{lem:walk}Let $M:\alpha\mapsto M_\alpha$ be a matroid flock on $E$, and let $\alpha,\beta\in \Z^E$. If $M_\alpha=M_\beta$, then there is a walk $\gamma^0,\ldots, \gamma^k\in\Z^E$ from $\alpha=\gamma^0$ to $\beta=\gamma^k$ so that $M_{\gamma^i}=M_\alpha$ for $i=0,\ldots, k$, and for each $i$ there is a $J_i$ so that $\gamma^i-\gamma^{i-1}=\pm e_{J_i}$
\end{lemma}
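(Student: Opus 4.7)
The plan is to invoke Theorem~\ref{thm:mf_char} to get a valuation $\nu=\nu^M$ with $M_\gamma=M^\nu_\gamma$, write $\BB:=\BB^\nu_\alpha=\BB^\nu_\beta$, and then combine the translation invariance (MF2) with a ``level set'' walk whose validity is checked via Lemma~\ref{lem:poly}. First I would use (MF2) to replace $\beta$ by $\beta':=\beta+N\one$ for an integer $N\ge 0$ chosen so that $\beta'\ge \alpha$ coordinate-wise; since $M_{\beta'}=M_\beta$, it is enough to walk from $\alpha$ to $\beta'$ inside $U:=\{\gamma\in\Z^E:M_\gamma=M_\beta\}$ and then append $N$ downward $-e_E$ steps to reach $\beta$, each of which stays in $U$ by (MF2).

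With $\delta:=\beta'-\alpha\ge 0$, I would decompose $\delta=\sum_{k=1}^p e_{I_k}$ into its level sets $I_k:=\{i:\delta_i\ge k\}$, so $I_1\supseteq\cdots\supseteq I_p$, and put $\gamma^0:=\alpha$, $\gamma^k:=\gamma^{k-1}+e_{I_k}$. Each step is of the required form $\gamma^k-\gamma^{k-1}=e_{I_k}$; the entire task is to show $\gamma^k\in U$ for $k=0,\ldots,p$. By Lemma~\ref{lem:poly}, membership in $U$ is cut out by the equalities $\gamma_a-\gamma_b=\nu(B)-\nu(B')$ for $B,B'\in \BB$ with $B'=B-a+b$, and the strict inequalities $\gamma_a-\gamma_b>\nu(B)-\nu(B')$ for $B\in\BB$, $B'\in\BB^\nu\setminus\BB$ with $B'=B-a+b$; so it suffices to verify these at each $\gamma^k$.

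The key observation is that $\gamma^k_a-\gamma^k_b=(\alpha_a-\alpha_b)+t_k$, where $t_k:=\sum_{j=1}^k([a\in I_j]-[b\in I_j])$ is \emph{monotone} in $k$: since $a\in I_j$ exactly when $j\le \delta_a$, the $+1$ summands only arise for $\delta_b<j\le \delta_a$ and the $-1$ summands only for $\delta_a<j\le\delta_b$, never both. Hence $\gamma^k_a-\gamma^k_b$ lies between the endpoint values $\alpha_a-\alpha_b$ and $\beta'_a-\beta'_b$, which is enough to preserve every strict inequality (both endpoints are $>\nu(B)-\nu(B')$) and, since an imposed equality forces $\delta_a=\delta_b$ and hence $t_k\equiv 0$, also every equality. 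The main obstacle I anticipate is preventing an ``extra'' basis from entering $M^\nu_{\gamma^k}$ at an intermediate step, and this monotonicity of $t_k$, which is built into the level-set decomposition, is precisely what rules it out.
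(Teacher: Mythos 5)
Your proposal is correct, and it takes a genuinely different route from the paper's proof. The paper normalizes one coordinate to zero, identifies $\{\gamma:M_\gamma=M_\alpha\}$ with the relative interior $(C^\nu_\alpha)^\circ$ using Lemma~\ref{lem:poly}, and then introduces an auxiliary \emph{bounded} alcoved polytope $C=\{\gamma:\gamma_i-\gamma_j\geq \min\{\alpha_i-\alpha_j,\beta_i-\beta_j\},\ \gamma_{i_0}=0\}$ sandwiched between $\{\alpha,\beta\}$ and $(C^\nu_\alpha)^\circ$. It then appeals to total unimodularity to conclude $C$ is an integral polytope, and to the geometry of alcoved polytopes (edges parallel to $e_J$) to extract the walk from the $1$-skeleton of $C$. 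Your argument replaces all of this polytope machinery with the explicit level-set decomposition $\delta=\sum_k e_{I_k}$ with $I_1\supseteq\cdots\supseteq I_p$ and the observation that every difference $\gamma^k_a-\gamma^k_b$ is monotone in $k$ between its endpoint values (which is precisely what the nestedness of the $I_k$ guarantees). Combined with the reduction to $\beta'\geq\alpha$ via (MF2), this produces the walk directly. What each approach buys: the paper's proof is conceptually aligned with the alcoved-polytope viewpoint developed in Subsection~\ref{sec:Cells}, whereas yours is more elementary and constructive, bypassing integrality, total unimodularity, and $1$-skeleton combinatorics entirely. Both arguments tacitly use the same fact from Lemma~\ref{lem:poly} --- that $\{\gamma:M_\gamma=M_\alpha\}$ is cut out by the stated equalities and strict inequalities, i.e.\ equals $(C^\nu_\alpha)^\circ$ --- which in turn relies on basis-exchange connectivity to rule out ``extra'' bases; since the paper leaves that step at the same level of explicitness as you do, this is not a gap relative to the paper.
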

\proof By (MF2), there is a feasible walk from $\alpha$ to $\alpha+k\one$ for any $k\in\Z$, taking steps of the form $\pm\one$. Fixing any $i_0\in E$, we may assume that $\alpha_{i_0}=0$, and similarly that $\beta_{i_0}=0$.

Let $\nu=\nu^M$. Using Lemma \ref{lem:poly}, we have $\{\gamma\in \R^E: M_\gamma=M_\alpha\}=(C^\nu_\alpha)^\circ,$ where $(C^\nu_\alpha)^\circ$ denotes the relative interior of $C^\nu_\alpha$.
For each $i,j$ let $c_{ij}:=\min\{\alpha_i-\alpha_j, \beta_i-\beta_j\}$. Then by inspection of the system of inequalities which defines $C^\nu_\alpha$ (Lemma \ref{lem:poly}), we have
$$\alpha, \beta\in C:=\{\gamma\in \R^E: \gamma_i-\gamma_j\geq c_{ij}\text{ for all }i,j, \text{ and }\gamma_{i_0}=0\}\subseteq \{\gamma\in \R^E: M_\gamma=M_\alpha\}.$$
Then  $C$ is a bounded polyhedron defined by a totally unimodular system of inequalities with integer constant terms $c_{ij}$. It follows that $C$ is an integral polytope. Moreover $\alpha, \beta$ are both vertices of $C$, and hence there is a walk from $\alpha$ to $\beta$ over the 1-skeleton of $C$. Since $C$ has integer vertices, and each edge of $C$ is parallel to $e_J$ for some $J\subseteq E$, the lemma follows.
\endproof
\begin{lemma} \label{lem:step}Let $M:\alpha\mapsto M_\alpha$ be a matroid flock on $E$, let $\alpha\in \Z^E$, and let $J\subseteq E$. If $M_\alpha=M_{\alpha+e_J}$, then $\lambda_{M_\alpha}(J)=0$.\end{lemma}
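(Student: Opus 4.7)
The plan is to derive the conclusion directly from Lemma \ref{lem:triangle}, which is the only earlier result that compares rank functions at $\alpha$ and at $\alpha + e_J$. Specifically, taking $I = J$ and the larger set equal to $E$ in Lemma \ref{lem:triangle} gives
\[
r_\alpha(E) \;=\; r_\alpha(J) + r_{\alpha + e_J}(\overline{J}).
\]
This is an identity that holds for \emph{any} matroid flock, with no hypothesis on $M_\alpha$ versus $M_{\alpha+e_J}$.

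Now I would invoke the hypothesis $M_\alpha = M_{\alpha+e_J}$, which means in particular that the rank functions agree on every subset of $E$, so $r_{\alpha + e_J}(\overline{J}) = r_\alpha(\overline{J})$ and $r_\alpha(E) = r_{M_\alpha}(E)$. Substituting into the displayed equation yields $r_{M_\alpha}(E) = r_{M_\alpha}(J) + r_{M_\alpha}(\overline{J})$, which by definition says $\lambda_{M_\alpha}(J) = 0$. There is no real obstacle here; the whole content of the lemma is packaged into the triangle identity of Lemma \ref{lem:triangle}, and the proof is a two-line substitution.
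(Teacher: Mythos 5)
Your proof is correct and is essentially the paper's proof phrased in terms of rank functions: the paper applies Lemma~\ref{lem:mf} to get $M_\alpha/J = M_{\alpha+e_J}\setminus J = M_\alpha\setminus J$ and reads off $\lambda_{M_\alpha}(J)=0$, while you apply the rank-function corollary Lemma~\ref{lem:triangle} (which is proved from Lemma~\ref{lem:mf}) and substitute $r_{\alpha+e_J}=r_\alpha$ to reach the same identity $r_\alpha(E)=r_\alpha(J)+r_\alpha(\overline J)$.
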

\proof By Lemma \ref{lem:mf}, we have $M_\alpha/J=M_{\alpha+e_J}\del J=M_\alpha\del J$. The lemma follows.\endproof

\section{Frobenius flocks from algebraic matroids} \label{sec:algebraic}
\ignore{Let $E$ be a finite set, let $K$ be a field, and let $f:K\rightarrow K$ be a field automorphism. An {\em $f$-flock of rank $d$ on $E$ over $K$} is a map $V$ which assigns a linear subspace $V_\alpha \subseteq K^E$ of dimension $d$ to each $\alpha\in\Z^E$, such that
\begin{itemize}
\item[(FF1)] $V_{\alpha}/ i=V_{\alpha+e_i}\del i$ for all $\alpha\in\Z^E$ and $i\in E$; and
\item[(FF2)] $V_{\alpha}=f[V_{\alpha+\one}] $ for all $\alpha\in \Z^E$.
\end{itemize}

A {\em Frobenius flock} is an $F^{-1}$-flock over a perfect field $K$ of nonzero characteristic $p$, where  $F:x\mapsto x^p$ is the Frobenius automorphism.

Let $\alpha\in \Z^E$ and let $w\in K^E$. In the context of $f$-flocks, we describe the effect of applying the automorphism $f$ $\alpha_i\in \Z$ times to each $i$th entry of  $w$  using the notation $$\alpha w := (f^{\alpha_i}(w_i))_{i\in E}.$$
Note that this defines an action of $\Z^E$ on $K^E$ which sends additive subgroups to additive subgroups.
For a set  $W\subseteq K^E$, we write $\alpha W:=\{ \alpha w: w\in W\}.$
With that notation our two flock axioms are written as
\begin{itemize}
\item[(FF1)] $V_{\alpha}/ i=V_{\alpha+e_i}\del i$ for all $\alpha\in\Z^E$ and $i\in E$; and
\item[(FF2)] $V_{\alpha+\one}=\one V_{\alpha} $ for all $\alpha\in \Z^E$.
\end{itemize}
By inspection of the definition of a matroid flock, it is clear that the map $\alpha \mapsto M(V_\alpha)$ is a matroid flock. We write $M(V)$ for the support matroid of this associated matroid flock.
In view of Lemma \ref{lem:supp}, we may consider a flock $V$ as a representation of its support matroid $M(V)$.
}

Let $E$ be a finite set, and let $K$ be an algebraically closed field of positive characteristic $p>0$, so that the Frobenius map $F:x\mapsto x^p$ is an automorphism of $K$. For any $w\in K^E$ and $\alpha\in \Z^E$, let $$\alpha w:=(F^{-\alpha_i}(w_i))_{i\in E},$$ and for a subset $W\subseteq K^E$,  let $\alpha W:=\{ \alpha w: w\in W\}.$

A {\em Frobenius flock of rank $d$ on $E$ over $K$} is a map $V$ which assigns a linear subspace $V_\alpha \subseteq K^E$ of dimension $d$ to each $\alpha\in\Z^E$, such that
\begin{itemize}
\item[(FF1)] $V_{\alpha}/ i=V_{\alpha+e_i}\del i$ for all $\alpha\in\Z^E$ and $i\in E$; and
\item[(FF2)] $V_{\alpha+\one}=\one V_{\alpha} $ for all $\alpha\in \Z^E$.
\end{itemize}
By inspection of the definition of a matroid flock, it is clear that the map $\alpha \mapsto M(V_\alpha)$ is a matroid flock. We write $M(V)$ for the support matroid of this associated matroid flock.
In view of Lemma \ref{lem:supp}, we may consider a flock $V$ as a representation of its support matroid $M(V)$.

Our interest in Frobenius flocks originates from
the observation that they can represent arbitrary algebraic matroids.
This is especially interesting since in positive
characteristic no other
``linear'' representation of arbitrary algebraic matroids was available so
far. In this section, we first introduce some preliminaries on algebraic
matroids, and then establish their representability by Frobenius flocks.

\subsection{Preliminaries on algebraic matroids}

Let $K$ be a field and $L$ an extension field of $K$.  Elements
$a_1,\ldots,a_n \in L$ are called {\em algebraically independent} over
$K$ if there exists no nonzero polynomial $f \in K[x_1,\ldots,x_n]$
such that $f(a_1,\ldots,a_n)=0$. Algebraic independence satisfies the
matroid independence axioms (I1)---(I3) \cite{OxleyBook}, and this leads to the following notion.

\begin{definition}
Let $M$ be a matroid on a finite set $E$. An {\em algebraic
representation} of $M$ over $K$ is a pair $(L,\phi)$ consisting of a
field extension $L$ of $K$ and a map $\phi:E \to L$ such that any $I
\subseteq E$ is independent in $M$ if and only if the multiset $\phi(I)$
is algebraically independent over $K$.
\end{definition}

For our purposes it will be useful to take a more geometric viewpoint;
a good general reference for the algebro-geometric terminology that
we will use is \cite{Cox2007}; and we refer to \cite{Kiralyi2013,Rosen2014}
for details on the link to algebraic matroids.

First, we assume throughout this section that $K$ is algebraically
closed. This is no loss of generality in the following sense: take an
algebraic closure $L'$ of $L$ and let $K'$ be the algebraic closure of $K$
in $L'$. Then for any subset $I \subseteq E$ the set $\phi(I) \subseteq
L$ is algebraically independent over $K$ if and only if $\phi(I)$ is
algebraically independent over $K'$.

Second, there is clearly no harm in assuming that $L$ is generated by
$\phi(E)$. Then let $P$ be the kernel of the $K$-algebra homomorphism
from the polynomial ring $R:=K[(x_i)_{i \in E}]$ into $L$ that sends
$x_i$ to $\phi(i)$. Since $L$ is a domain, $P$ is a prime ideal, so
the quotient $R/P$ is a domain and $L$ is isomorphic to the field of
fractions of this domain.

By Hilbert's basis theorem, $P$ is finitely generated, and one can store
algebraic representations on a computer by means of a list of generators
(of course, this requires that one can already compute with elements of
$K$). In these terms, a subset $I \subseteq E$ is independent if and
only if $P \cap K[x_i : i \in I]=\{0\}$. Given generators of $P$,
this intersection can be computed using Gr\"obner bases
\cite[Chapter 3, \S 1, Theorem 2]{Cox2007}.

The vector space $K^E$ is equipped with the Zariski topology, in which
the closed sets are those defined by polynomial equations; we will
use the term {\em variety} or {\em closed subvariety} for such a set.
In particular, let $X \subseteq K^E$ be the closed subvariety defined as
the $\{a \in K^E : \forall f \in P: f(a)=0\}$. Since $P$ is prime, $X$
is an irreducible closed subvariety, and by Hilbert's Nullstellensatz,
$P$ is exactly the set of all polynomials that vanish everywhere on $X$.

We have now seen how to go from an algebraic representation over $K$
of a matroid on $E$ to an irreducible subvariety of $K^E$.  Conversely,
every irreducible closed subvariety $Y$ of $K^E$ determines an algebraic
representation of some matroid, as follows: let $Q \subseteq R$ be
the prime ideal of polynomials vanishing on $Y$, let $K[Y]:=R/Q$ be
the integral domain of regular functions on $Y$, and set $L:=K(Y)$, the fraction
field of $K[Y]$. The map $\phi$ sending $i$ to the class of $x_i$ in
$L$ is a representation of the matroid $M$ in which $I \subseteq E$
is independent if and only if $Q \cap K[x_i : i \in I]=\{0\}$. This
latter condition can be reformulated as saying that the image of $Y$
under the projection $\pi_I:K^E \to K^I$ is dense in the latter space,
i.e., that $Y$ projects {\em dominantly} into $K^I$. Our discussion is
summarised in the following definition and lemma.

\begin{definition}
An {\em algebro-geometric representation} of a matroid $M$ on the ground
set $E$ over the algebraically closed field $K$ is an irreducible, closed
subvariety $Y$ of $K^E$ such that $I \subseteq E$ is independent in $M$
if and only if $\overline{\pi_I(Y)}=K^I$. We denote the matroid $M$
represented by $Y$ as $M(Y)$.
\end{definition}

We have seen:

\begin{lemma}
A matroid $M$ admits an algebraic representation over the algebraically
closed field $K$ if and only if it admits an algebro-geometric
representation over $K$. \hfill $\square$
\end{lemma}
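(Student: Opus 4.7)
The proof is essentially bookkeeping: the discussion preceding the lemma already constructs both directions of the equivalence, so the plan is to collect those constructions and verify that each of them preserves the matroid structure.

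For the forward implication, I would start with an algebraic representation $(L,\phi)$ and, without loss of generality, replace $L$ by the subfield generated by $\phi(E)$ over $K$. The kernel $P$ of the $K$-algebra map $R=K[(x_i)_{i\in E}]\to L$, $x_i\mapsto \phi(i)$, is prime because $L$ is a domain, so $Y:=V(P)\subseteq K^E$ is an irreducible closed subvariety. The matroid-theoretic verification rests on the standard fact (consequence of the Nullstellensatz plus the fact that $K$ is algebraically closed) that for any $I\subseteq E$,
\[
\overline{\pi_I(Y)} \;=\; V\bigl(P\cap K[x_i:i\in I]\bigr)\subseteq K^I.
\]
Hence $\overline{\pi_I(Y)}=K^I$ if and only if $P\cap K[x_i:i\in I]=\{0\}$, which is exactly the condition that $\phi(I)$ is algebraically independent over $K$, i.e.\ that $I$ is independent in $M$. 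Thus $Y$ is an algebro-geometric representation of $M$.

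For the converse, given an algebro-geometric representation $Y\subseteq K^E$ of $M$, I would take the prime ideal $Q:=I(Y)\subseteq R$, form the coordinate ring $K[Y]=R/Q$ (a domain because $Y$ is irreducible), let $L:=K(Y)$ be its fraction field, and define $\phi:E\to L$ by $\phi(i):=[x_i]$. The same identity $\overline{\pi_I(Y)}=V(Q\cap K[x_i:i\in I])$ again shows that $I$ is matroid-independent in $M(Y)$ precisely when $Q\cap K[x_i:i\in I]=\{0\}$, i.e.\ when the restriction of the quotient map $K[x_i:i\in I]\to L$ is injective, i.e.\ when $\phi(I)$ is algebraically independent over $K$.

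The only mildly delicate point is the ``multiset'' language in the definition of an algebraic representation: if $i\ne j$ but $\phi(i)=\phi(j)$, then $\phi(I)$ cannot be algebraically independent whenever $\{i,j\}\subseteq I$, and this correctly matches the algebro-geometric side because in that case $Y\subseteq V(x_i-x_j)$, so $\pi_I$ cannot be dominant. Beyond this observation there is no real obstacle; the argument is a routine dictionary between prime ideals in $R$ and irreducible subvarieties of $K^E$, and the lemma follows.
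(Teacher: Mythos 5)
Your proposal is correct and follows essentially the same route as the paper's own (implicit) proof, which is given in the discussion preceding the lemma: pass from $(L,\phi)$ to the prime kernel $P$ of $K[(x_i)_{i\in E}]\to L$ and its zero set, and conversely from $Y$ to the fraction field of $K[Y]$, with both directions hinging on the elimination-ideal identity $\overline{\pi_I(Y)}=V(P\cap K[x_i:i\in I])$. Your closing remark about the multiset language is a minor clarification the paper leaves implicit, but the substance matches.
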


The rank function on $M$ corresponds to dimension:

\begin{lemma}
If $Y$ is an algebro-geometric representation of $M$, then for each
$I \subseteq E$ the rank of $I$ in $M$ is the dimension of the Zariski
closure $\overline{\pi_I(Y)}$.  \hfill $\square$
\end{lemma}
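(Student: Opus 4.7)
The plan is to reduce the statement to the standard fact from field theory that, for a finitely generated field extension $K(a_1,\ldots,a_n)/K$, the transcendence degree of $K(a_1,\ldots,a_n)$ over $K$ equals the maximal size of an algebraically independent subset of $\{a_1,\ldots,a_n\}$ (in fact, any maximal algebraically independent subset is a transcendence basis).

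First, I would set up the standard dictionary between projections and subfields. For any irreducible closed subvariety $Z \subseteq K^n$, one has $\dim Z = \operatorname{trdeg}_K K(Z)$, and the function field $K(Z)$ is generated over $K$ by the restrictions of the coordinate functions $x_1|_Z,\ldots,x_n|_Z$. Apply this with $Z = \overline{\pi_I(Y)} \subseteq K^I$: writing $\phi_i := x_i|_Y \in K(Y)$ for each $i\in E$, the function field of $\overline{\pi_I(Y)}$ is precisely the subfield $K(\phi_i : i \in I) \subseteq K(Y)$, so
\[
\dim \overline{\pi_I(Y)} \;=\; \operatorname{trdeg}_K K(\phi_i : i \in I).
\]

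Next, by the very definition of algebro-geometric representation, a subset $J \subseteq E$ is independent in $M = M(Y)$ exactly when $\overline{\pi_J(Y)} = K^J$, i.e.\ when $\dim\overline{\pi_J(Y)} = |J|$. By the identity just established, this is equivalent to $\operatorname{trdeg}_K K(\phi_j : j \in J) = |J|$, which is exactly the statement that the family $(\phi_j)_{j\in J}$ is algebraically independent over $K$.

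Finally, combine the two observations. The rank $r_M(I)$ is by definition the largest size of an independent subset $J \subseteq I$; by the previous step this equals the largest size of an algebraically independent subfamily of $(\phi_i)_{i\in I}$; and by the field-theoretic fact recalled at the outset, this in turn equals $\operatorname{trdeg}_K K(\phi_i : i \in I) = \dim \overline{\pi_I(Y)}$, completing the proof. There is no real obstacle here beyond invoking the classical correspondence between dimension and transcendence degree; everything else is a direct unpacking of the definitions.
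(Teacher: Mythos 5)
Your proof is correct and is precisely the standard argument; the paper itself omits the proof of this lemma (marking it with $\square$), relying on exactly this dictionary between Zariski closures of coordinate projections and subfields of the function field, which it has already set up in the surrounding discussion. The two ingredients you invoke --- $\dim Z = \operatorname{trdeg}_K K(Z)$ for irreducible $Z$, and the fact that any maximal algebraically independent subfamily of a generating set is a transcendence basis --- are exactly what is needed, and your reduction of $r_M(I)$ to the maximal size of an algebraically independent subfamily of $(\phi_i)_{i\in I}$ is the right unpacking of the matroid rank function.
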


Because of this equivalence between algebraic and algebro-geometric
representations, we will continue to use the term algebraic representation
for algebro-geometric representations.

\subsection{Tangent spaces}

Crucial to our construction of a flock from an algebraic matroid are
tangent spaces, which were also used in \cite{Lindstrom1985} in the
study of characteristic sets of algebraic matroids.  In this subsection,
$Y \subseteq K^E$ is an irreducible, closed subvariety with vanishing
ideal $Q \subseteq R$, $K[Y]:=R/Y$ is its coordinate ring, and $K(Y)$
its function field.

\begin{definition}
Define the $K[Y]$-module
\[ J_Y:=\left\{\left(\frac{\partial f}{\partial x_j} + Q\right)_{j \in E} : f \in Q\right\}
\subseteq K[Y]^E. \]
For any $v \in Y$, define the tangent space $T_v Y := J_Y(v)^\perp
\subseteq K^E$, where $J_Y(v) \subseteq K^E$ is the image of $J_Y$ under
evaluation at $v$.
Let $\eta \in K(Y)^E$ be the generic point of $Y$,
i.e., the point $(x_j+Q)_{j \in E}$, and define $T_\eta Y$ as $(K(Y)
\otimes_{K[Y]} J_Y)^\perp \subseteq K(Y)^E$. The variety $Y$ is called
{\em smooth} at $v$ (and $v$ a {\em smooth point} of $Y$) if $\dim_K
T_v Y = \dim_{K(Y)} T_\eta Y$.
\end{definition}

The $K[Y]$-module $J_Y$ is generated by the rows of the Jacobi matrix
$(\frac{\partial f_i}{\partial x_j} + Q)_{i,j}$ for any finite set of
generators $f_1,\ldots,f_r$ of $Q$. The right-hand side in the smoothness
condition also equals the transcendence degree of $K(Y)$ over $K$ and
the Krull dimension of $Y$. The smooth points in $Y$ form an open and
dense subset of $Y$.

We recall the following property of smooth points.

\begin{lemma} \label{lm:Saturated}
Assume that $Y$ is smooth at $v \in Y$ and let $S$ be the local ring
of $Y$ at $v$, i.e., the subring of $K(Y)$ consisting of all fractions
$f/g$ where $g(v) \neq 0$. Then $M:=S \otimes_{K[Y]} J_Y \subseteq S^E$
is a free $S$-module of rank equal to $|E|-\dim Y$, which is
saturated in the sense that $su \in M$ for $s \in S$ and $u
\in S^E$ implies $u \in M$. \hfill $\square$
\end{lemma}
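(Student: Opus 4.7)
The plan is to identify the quotient $K[Y]^E/J_Y$ with a module of K\"ahler differentials and then to invoke the fact that differentials of a smooth variety are locally free.

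Specifically, set $R := K[(x_i)_{i\in E}]$, so $K[Y] = R/Q$. The first fundamental exact sequence for K\"ahler differentials (the conormal sequence of the closed embedding $Y \hookrightarrow K^E$) reads
\[
Q/Q^2 \xrightarrow{d} \Omega_{R/K} \otimes_R K[Y] \to \Omega_{K[Y]/K} \to 0.
\]
Because $\Omega_{R/K}$ is $R$-free on the symbols $dx_i$, the middle term is canonically $K[Y]^E$, and the left map sends $f + Q^2$ to $\bigl(\partial f/\partial x_j + Q\bigr)_{j \in E}$. Its image is, by definition, exactly $J_Y$, so $K[Y]^E/J_Y \cong \Omega_{K[Y]/K}$. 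Since $S$ is a localization of $K[Y]$ it is flat over $K[Y]$, so tensoring preserves the inclusion $J_Y \hookrightarrow K[Y]^E$ (thereby justifying the stated containment $M \subseteq S^E$) and yields a short exact sequence
\[
0 \to M \to S^E \to \Omega_{S/K} \to 0,
\]
using that formation of K\"ahler differentials commutes with localization.

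Next I would invoke the standard characterization of smooth points: since $K$ is algebraically closed (hence perfect) and $v$ is a smooth point of $Y$, the local ring $S$ is a regular local ring of Krull dimension $d := \dim Y$, and $\Omega_{S/K}$ is a free $S$-module of rank $d$. In particular $\Omega_{S/K}$ is projective, so the displayed short exact sequence splits, giving $S^E \cong M \oplus \Omega_{S/K}$, and therefore $M$ is projective. Since $S$ is local, every finitely generated projective $S$-module is free; comparing ranks yields that $M$ is free of rank $|E| - d$, as claimed.

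Saturation then comes for free from the isomorphism $S^E/M \cong \Omega_{S/K}$: a free module over the integral domain $S$ is torsion-free, so if $su \in M$ with a nonzero $s \in S$ and $u \in S^E$, then the class of $u$ in $S^E/M$ is annihilated by the nonzerodivisor $s$ and must vanish, whence $u \in M$. The one substantive input that I would cite wholesale rather than prove is the freeness of $\Omega_{S/K}$ at a smooth point of a variety over a perfect field; everything else is elementary bookkeeping with a short exact sequence, so no step really qualifies as a major obstacle.
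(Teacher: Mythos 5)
Your proof is correct, and it follows the standard route: identify $K[Y]^E/J_Y$ with $\Omega_{K[Y]/K}$ via the conormal sequence, localize (using exactness of localization and commutation of $\Omega$ with localization) to get $0 \to M \to S^E \to \Omega_{S/K} \to 0$, then invoke freeness of $\Omega_{S/K}$ of rank $\dim Y$ at a smooth point of a variety over a perfect field to split the sequence; saturation is torsion-freeness of the free quotient. The paper states this lemma as a known fact and omits the proof, so there is no alternative argument in the paper to compare against; the one point worth confirming is that the paper's definition of ``smooth at $v$'' ($\dim_K T_v Y = \dim_{K(Y)} T_\eta Y$, the latter equal to $\dim Y$) is equivalent to the Jacobian criterion and hence to the standard notion under which $\Omega_{S/K}$ is free, and indeed it is. Also note the saturation statement implicitly requires $s\neq 0$, which you correctly supply.
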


Now we come to a fundamental difference between characteristic zero and
positive characteristic.

\begin{lemma}
Let $v \in Y$ be smooth, and let $I \subseteq E$. Then $\dim \pi_I(T_v Y)
\leq \dim \overline{\pi_I(Y)}$. If, moreover, the characteristic of $K$
is equal to zero, then the set of smooth points $v \in Y$ for which
equality holds is an open and dense subset of $Y$. \hfill $\square$
\end{lemma}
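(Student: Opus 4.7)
The plan is to let $Z := \overline{\pi_I(Y)}$ and study the dominant morphism $\phi := \pi_I|_Y \colon Y \to Z$ obtained by restricting the linear projection. Because $\pi_I$ is linear and hence coincides with its own differential, the differential $d\phi_v$ at any $v \in Y$ is simply $\pi_I$ restricted to $T_v Y$; a short chain-rule computation (applied to $g \circ \pi_I$ for $g$ in the vanishing ideal of $Z$, which then lies in the vanishing ideal of $Y$) shows that $\pi_I(T_v Y) \subseteq T_{\pi_I(v)} Z$, so $d\phi_v$ is legitimately a map $T_v Y \to T_{\pi_I(v)} Z$ whose image is $\pi_I(T_v Y)$.

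For the inequality I would invoke the fiber-dimension theorem. Fix a smooth $v \in Y$, let $F := \phi^{-1}(\pi_I(v))$, and let $F_0$ be an irreducible component of $F$ through $v$; Chevalley's theorem gives $\dim F_0 \geq \dim Y - \dim Z$. Since $\phi$ is constant on $F_0$, we have $T_v F_0 \subseteq T_v Y \cap \ker \pi_I$, and together with $\dim T_v F_0 \geq \dim F_0$ this yields
\[
\dim(T_v Y \cap \ker \pi_I) \;\geq\; \dim Y - \dim Z.
\]
Smoothness of $v$ enters decisively here, giving $\dim T_v Y = \dim Y$; rank--nullity applied to $\pi_I|_{T_v Y}$ then delivers $\dim \pi_I(T_v Y) \leq \dim Z = \dim \overline{\pi_I(Y)}$. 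Note that without smoothness the tangent space could be larger than $\dim Y$ and the argument would not conclude.

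For the equality claim in characteristic zero I would use generic smoothness: the function-field extension $K(Z) \hookrightarrow K(Y)$ is automatically separable in characteristic $0$, so the differential of $\phi$ is surjective at the generic point, hence on a nonempty Zariski-open subset $U_0 \subseteq Y$. Intersecting $U_0$ with the smooth locus of $Y$ and with $\phi^{-1}$ of the smooth locus of $Z$---both open and dense in $Y$, the latter by dominance of $\phi$---produces an open dense $U \subseteq Y$ on which $\pi_I(T_v Y) = T_{\pi_I(v)} Z$ and $\dim T_{\pi_I(v)} Z = \dim Z$. The main obstacle, and indeed the phenomenon motivating the entire paper, is that this separability step collapses in positive characteristic: the extension $K(Z) \hookrightarrow K(Y)$ can be purely inseparable, in which case $\pi_I(T_v Y)$ has dimension strictly smaller than $\dim Z$ at \emph{every} point of $Y$, as with the prototypical example $V(x_1-x_2^p)$ from the introduction. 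The Frobenius flock machinery to come is designed precisely to recover the missing dimension by allowing the variety to be twisted by powers of $F$ before tangent spaces are formed.
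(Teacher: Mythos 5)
Your proof is correct and complete. The paper itself does not spell out a proof of this lemma---it calls the inequality ``fairly straightforward'' and defers to Shafarevich for the characteristic-zero statement---so there is no written paper argument to compare against. Your argument is the natural direct one and it holds up: $\pi_I$, being linear, equals its own differential, so the chain rule applied to elements of the vanishing ideal of $Z:=\overline{\pi_I(Y)}$ gives $\pi_I(T_vY)\subseteq T_{\pi_I(v)}Z$; the fiber-dimension theorem gives $\dim(T_vY\cap\ker\pi_I)\geq\dim T_vF_0\geq\dim F_0\geq\dim Y-\dim Z$; and smoothness of $v$ pins $\dim T_vY=\dim Y$, so rank--nullity yields $\dim\pi_I(T_vY)\leq\dim Z$. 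For the characteristic-zero equality locus, generic smoothness of the dominant morphism $\phi=\pi_I|_Y$ (separability of $K(Z)\hookrightarrow K(Y)$ being automatic) intersected with the smooth loci of $Y$ and of $Z$ (pulled back along $\phi$, which stays dense by dominance and irreducibility) supplies an open dense set where $\pi_I(T_vY)=T_{\pi_I(v)}Z$ has dimension $\dim Z$, and lower-semicontinuity of matrix rank plus openness of the smooth locus shows that the \emph{full} set of smooth points achieving equality is open. One minor naming point: the lower bound on fiber dimension is usually called the fiber-dimension theorem rather than Chevalley's theorem (the latter name more commonly refers to constructibility of images), though the attribution to Chevalley is also in circulation.
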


The inequality is fairly straightforward, and it shows that
$M(T_v Y)$ is always a weak image of $M(Y)$ (of the same
rank as the latter).  For a proof of the statement
in characteristic zero, see for instance \cite[Chapter II, Section
6]{Shafarevich1994}. A direct consequence of this lemma is the
following, well-known theorem \cite{Ingleton1971}.

\begin{theorem}[Ingleton] \label{thm:CharZero}
If $\cha K=0$, then every matroid that admits an algebro-geometric
representation over $K$ also admits a linear representation over $K$.
\end{theorem}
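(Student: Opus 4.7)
The plan is to show that for any algebro-geometric representation $Y \subseteq K^E$ of $M$, the tangent space $T_v Y$ at a sufficiently general smooth point $v \in Y$ is itself a linear representation of $M$. The whole argument is a finiteness-of-$2^E$ reduction to the preceding lemma.

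First I would invoke that lemma: in characteristic zero, for each fixed $I \subseteq E$ the set $U_I$ of smooth points $v \in Y$ at which $\dim \pi_I(T_v Y) = \dim \overline{\pi_I(Y)}$ is open and dense in $Y$. Since $E$ is finite, there are only finitely many subsets $I$, and so $U := \bigcap_{I \subseteq E} U_I$ is a finite intersection of open dense subsets of the irreducible variety $Y$. Hence $U$ is itself open and dense, and in particular nonempty. I fix any $v \in U$.

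Next I would verify that the linear subspace $W := T_v Y \subseteq K^E$ represents $M$. The rank function of $M(W)$ satisfies $r_W(I) = \dim(W \del \overline{I}) = \dim \pi_I(W)$. By choice of $v \in U$, this equals $\dim \overline{\pi_I(Y)}$ for every $I \subseteq E$, which in turn is the rank $r_M(I)$ of $I$ in $M = M(Y)$. Since a matroid is determined by its rank function, $M(T_v Y) = M$, so $T_v Y$ is the sought linear representation of $M$ over $K$.

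The only nontrivial ingredient is the generic equality $\dim \pi_I(T_v Y) = \dim \overline{\pi_I(Y)}$, which is the preceding lemma and rests on separability of field extensions in characteristic zero. Given that, the present theorem is essentially just the observation that $2^E$ is finite, so finitely many genericity conditions can be imposed on $v$ simultaneously without exhausting $Y$. I do not expect any real obstacle in executing this plan; the point worth flagging is rather that in positive characteristic the lemma fails, and one is forced to allow Frobenius twists $\alpha X$, at which point genericity has to be enforced along the infinite family $\alpha \in \Z^E$ rather than the finite family $I \subseteq E$ — this is the issue that motivates the Frobenius-flock machinery developed in the remainder of the paper.
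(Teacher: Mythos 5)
Your proof is correct and follows essentially the same route as the paper's: intersect the finitely many open dense loci $U_I$ guaranteed by the preceding lemma and take the tangent space at any point in the intersection. The only difference is that you spell out the identification of rank functions $r_{T_vY}(I)=\dim\pi_I(T_vY)=\dim\overline{\pi_I(Y)}=r_{M(Y)}(I)$, which the paper leaves implicit.
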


\begin{proof}
If $Y \subseteq K^E$ represents $M$, then the set of smooth points $v
\in Y$ such that $\dim \pi_I(T_v Y)=\dim \overline{\pi_I(M)}$ for all
$I \subseteq E$ is a finite intersection of open, dense subsets, and
hence open and dense. For any such point $v$, the linear space $T_v Y$
represents the same matroid as $Y$.
\end{proof}

A fundamental example where this reasoning fails in positive
characteristic was given in the introduction.
As we will see next, Frobenius flocks are a variant
of Theorem~\ref{thm:CharZero} in positive characteristic.

\subsection{Positive characteristic}

Assume that $K$ is algebraically closed of characteristic $p>0$.  Then we
have the action of $\Z^E$ on $K^E$ by $\alpha w:=(F^{-\alpha_i}w_i)_{i
\in E}$.

Let $X \subseteq K^E$ be an irreducible closed subvariety. To study the
orbit of $X$ under $\Z^E$, we need the following lemma.

\begin{lemma}
The action of $\Z^E$ on $K^E$ is via homeomorphisms
in the Zariski topology.
\end{lemma}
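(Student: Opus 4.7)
The plan is as follows. Since $K$ is algebraically closed, hence perfect, $F$ is a bijection on $K$ with inverse also a bijection; consequently each $\phi_\alpha : w \mapsto (F^{-\alpha_i} w_i)_{i \in E}$ is a bijection on $K^E$, and one checks directly that $\phi_\alpha \circ \phi_\beta = \phi_{\alpha+\beta}$, so $\phi_{-\alpha}$ inverts $\phi_\alpha$. Thus to prove that each $\phi_\alpha$ is a homeomorphism, it suffices to show that $\phi_\alpha$ is Zariski-continuous for every $\alpha \in \Z^E$.

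Second, since compositions of continuous maps are continuous and any $\alpha \in \Z^E$ is a finite sum of vectors of the form $\pm e_i$, it suffices to verify continuity of the generators $\phi_{e_i}$ and $\phi_{-e_i}$ for each $i \in E$. The map $\phi_{-e_i}$ keeps all coordinates except the $i$-th fixed and sends $w_i$ to $w_i^p$, so it is a polynomial map of affine space, hence continuous in the Zariski topology.

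The substantive step is the continuity of $\phi_{e_i}$, which fixes all coordinates except the $i$-th and sends $w_i$ to $w_i^{1/p}$. Given any polynomial $f = \sum_\mu c_\mu x^\mu \in K[x_E]$, the key computation is
\begin{equation*}
f(\phi_{e_i}(w))^p \;=\; \sum_\mu c_\mu^p\, w_i^{\mu_i} \prod_{j \neq i} w_j^{p \mu_j} \;=:\; g(w),
\end{equation*}
valid because of the Frobenius identity $(a+b)^p = a^p + b^p$ in characteristic $p$. The right-hand side $g$ is a genuine polynomial in $w$, and since $K$ is a field, $f(\phi_{e_i}(w)) = 0$ if and only if $g(w) = 0$. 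Hence $\phi_{e_i}^{-1}(V(f)) = V(g)$ is Zariski-closed, which proves continuity of $\phi_{e_i}$.

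The argument is entirely elementary; the only subtlety is the characteristic-$p$ identity used above, which converts the a priori non-polynomial expression $f(\phi_{e_i}(w))$ into a polynomial upon raising to the $p$-th power. No serious obstacle is anticipated.
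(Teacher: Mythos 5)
Your proof is correct and rests on the same key idea as the paper's: since $f(\phi_\alpha(w))$ is {\em a priori} not a polynomial when $\alpha$ has positive entries, raise it to a suitable power of $p$ so that the Frobenius identity $(a+b)^p=a^p+b^p$ turns the fractional exponents into integer ones, giving a polynomial with the same zero locus over the integral domain $K$. The only cosmetic difference is that the paper handles a general $\alpha$ at once (raising $f\circ\alpha^{-1}$ to the power $p^e$ for a single $e$ large enough to clear all denominators, and arguing via closedness of the image $\alpha Y$), whereas you reduce to the generators $\pm e_i$ and raise to the single power $p$, arguing via preimages of basic closed sets. These are two ways of organizing the same computation; neither buys anything substantive over the other.
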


These homeomorphisms are not polynomial automorphisms since
$F^{-1}:K \to K,c \mapsto c^{1/p}$, while well-defined as a map, is
not polynomial.

\begin{proof}
Let $\alpha \in \Z^E$ and let $Y \subseteq K^E$ be closed
with vanishing ideal $Q$. Then
\[ \alpha Y= \{v \in K^E : \forall_{f \in Q} f(\alpha^{-1}
v)=0\}. \]
Now for $f \in Q$ the function $g:K \to K,\ v \mapsto f(\alpha^{-1} v)$ is not
necessarily polynomial if $\alpha$ has negative entries. But
for every $e \in \Z$ the equation $g(v)=0$ has the same
solutions as the equation $g(v)^{p^e}=0$; and by choosing $e$
sufficiently large, $h(v):=g(v)^{p^e}$ {\em does} become a polynomial.
Hence $\alpha Y$ is Zariski-closed, and the map $K^E \to K^E$
defined by $\alpha$ is continuous. The same applies to $\alpha^{-1}$, so
$\alpha$ is a homeomorphism.
\end{proof}

As a consequence of the lemma, $\alpha X$ is an irreducible
subvariety of $K^E$ for each $\alpha \in \Z^E$, and has the same Krull
dimension as $X$---indeed, both of these terms have purely topological
characterisations. The ideal of $\alpha X$ can be obtained explicitly
from that of $X$ by writing $\alpha=c\one - \beta$ with $c \in \Z$
and $\beta \in \Z^E_{\geq 0}$ and applying the following two lemmas.

\begin{lemma} \label{lm:EqAlphaX}
Let $Y \subseteq K^E$ be closed with vanishing ideal $Q$,
and $\beta \in \Z_{\geq 0}^E$. Then the ideal of $(-\beta) Y$ equals
\[ \left\{f((x_i)_{i \in E}) : f\left((x_i^{(p^{\beta_i})})_{i
\in E}\right) \in Q \right\}. \]
\end{lemma}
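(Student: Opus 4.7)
The proof is essentially a direct unwinding of the definitions, so I would keep it short.

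First, I would unwind the action on a single point: for $\beta \in \Z_{\geq 0}^E$, the definition $\alpha w = (F^{-\alpha_i} w_i)_{i \in E}$ with $\alpha = -\beta$ gives
\[ (-\beta) w = (F^{\beta_i} w_i)_{i \in E} = (w_i^{p^{\beta_i}})_{i \in E}, \]
because $F: c \mapsto c^p$. Hence $(-\beta) Y = \{(w_i^{p^{\beta_i}})_{i \in E} : w \in Y\}$ as a set.

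Next, I would characterize the vanishing ideal of this set. For any polynomial $g \in R = K[(x_i)_{i \in E}]$, define $\tilde g \in R$ by the substitution $\tilde g((x_i)_{i \in E}) := g((x_i^{p^{\beta_i}})_{i \in E})$. Then for every $w \in Y$,
\[ g((-\beta) w) = g\!\left((w_i^{p^{\beta_i}})_{i \in E}\right) = \tilde g(w). \]
So $g$ vanishes on every point of $(-\beta) Y$ if and only if $\tilde g$ vanishes on every point of $Y$, i.e.\ $\tilde g \in Q$ (since $Q$ is the vanishing ideal of $Y$). This is exactly the condition defining the set on the right-hand side of the lemma.

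Finally, I would note that the vanishing ideal of $(-\beta) Y$ really is the ideal of all polynomials vanishing on $(-\beta) Y$: since the previous lemma shows $(-\beta) Y$ is closed, this coincides with its defining ideal. So the right-hand side is exactly the vanishing ideal of $(-\beta) Y$, proving the lemma. There is no real obstacle here; the only thing to check carefully is that the substitution $x_i \mapsto x_i^{p^{\beta_i}}$ is a well-defined polynomial map (it is, since each $\beta_i \geq 0$), which is precisely why the statement is restricted to $\beta \in \Z_{\geq 0}^E$ rather than arbitrary $\alpha \in \Z^E$.
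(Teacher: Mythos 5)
Your proof is correct. It takes a slightly different route from the paper: the paper observes that $(-\beta)Y$ is the image of $Y$ under the polynomial map $w \mapsto (w_i^{p^{\beta_i}})_i$ and then cites the elimination theorem of \cite[Chapter 3, \S 3, Theorem 1]{Cox2007} to identify its ideal with $Q \cap K[(x_i^{p^{\beta_i}})_i]$ (with the obvious relabeling), whereas you verify the claimed equality of ideals directly at the level of points: $g$ vanishes on $(-\beta)Y$ if and only if $g\circ\psi = \tilde g$ vanishes on $Y$, i.e.\ $\tilde g \in Q$. Your argument is more elementary and self-contained --- it does not need to form a graph variety or eliminate variables, and it automatically handles the image rather than its closure (which is immaterial here anyway, since the vanishing ideal of a set equals that of its closure). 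The paper's citation of elimination theory does carry one extra payload which your proof does not: it immediately gives a Gr\"obner-basis algorithm for computing the new ideal, a point the authors explicitly exploit in the remark following the lemma. One very small nit: the sentence invoking the previous lemma to know $(-\beta)Y$ is closed is unnecessary for identifying its vanishing ideal (which is defined for any subset), though it does justify calling $(-\beta)Y$ a variety.
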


\begin{proof}
The variety $(-\beta) Y$ is the image of $Y$ under a polynomial map, and by
elimination theory \cite[Chapter 3, \S 3, Theorem 1]{Cox2007} its ideal
is obtained from the intersection $Q \cap K\left[\left(x_i^{(p^{\beta_i})}\right)_{i
\in E}\right]$ by replacing $x^{(p^{\beta_i})}$ by $x_i$.
\end{proof}

Note that the ideal in the lemma can be computed from $Q$ by means
of Gr\"obner basis calculations, again using \cite[Chapter 3,\S 1,
Theorem 2]{Cox2007}.

\begin{lemma} \label{lm:EqconeX}
Let $Y \subseteq K^E$ be closed with vanishing ideal generated by
$f_1,\ldots,f_k$. Then for each $c \in \Z$ the ideal of $(c \one)Y$
is generated by $g_1:=F^{-c}(f_1),\ldots,g_k:=F^{-c}(f_k)$, where $F^{-c}$ acts
on the coefficients of these polynomials only.
\end{lemma}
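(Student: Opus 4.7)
The plan is to work directly with the ring automorphism of $R=K[(x_i)_{i\in E}]$ that $F^{-c}$ induces on coefficients, and verify both a set-theoretic identity and a radicality statement.

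First I would unpack the action: by definition $(c\one)w = (F^{-c}(w_i))_{i\in E}$, so a point $v \in K^E$ lies in $(c\one)Y$ if and only if $(F^c(v_i))_{i\in E} \in Y$. Since $K$ is algebraically closed of characteristic $p$, the Frobenius $F$ and its iterate $F^{-c}$ are field automorphisms of $K$. Let $\sigma$ denote the ring automorphism of $R$ obtained by applying $F^{-c}$ to coefficients and fixing each $x_i$; its inverse acts by $F^c$ on coefficients. The key pointwise identity is
\[
\sigma(f)(v) \;=\; \sigma\bigl(f(F^c(v))\bigr)\qquad\text{for every }f\in R,\ v\in K^E,
\]
which I would justify simply by noting that $\sigma$ is a ring homomorphism fixing $x_i$ and that $F^c$ is a ring homomorphism on $K$; writing $f=\sum a_\alpha x^\alpha$ gives $\sigma(f)(v) = \sum F^{-c}(a_\alpha) v^\alpha$, and applying $\sigma^{-1}=F^c$ on both sides yields $\sum a_\alpha F^c(v)^\alpha = f(F^c(v))$. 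Since $\sigma$ is a bijection on $K$, $\sigma(f)(v)=0$ if and only if $f(F^c(v))=0$.

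Applying this to each generator, the common zero set of $g_1=\sigma(f_1),\ldots,g_k=\sigma(f_k)$ equals $\{v\in K^E : F^c(v)\in Y\} = (c\one)Y$, so the varieties agree. To upgrade this to equality of ideals rather than just of varieties, I would argue as follows: $Q=\langle f_1,\ldots,f_k\rangle$ is the vanishing ideal of $Y$, hence radical; since $\sigma$ is a ring automorphism of $R$, its image $\sigma(Q) = \langle g_1,\ldots,g_k\rangle$ is again radical. By Hilbert's Nullstellensatz (applicable because $K$ is algebraically closed), a radical ideal coincides with the vanishing ideal of its zero locus, so $\langle g_1,\ldots,g_k\rangle$ is exactly the vanishing ideal of $(c\one)Y$, as desired.

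The only subtle point, and thus the main thing to get right, is the distinction between showing that $g_1,\ldots,g_k$ cut out $(c\one)Y$ set-theoretically and showing that they \emph{generate} the vanishing ideal. The radicality step via the automorphism $\sigma$ is what bridges the two; conceptually this is the same phenomenon as in Lemma~\ref{lm:EqAlphaX}, but it is more transparent here since $\sigma$ is a genuine automorphism of $R$ rather than an elimination operation.
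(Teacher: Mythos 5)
Your proof is correct and takes essentially the same route as the paper: establish the set-theoretic equality of $(c\one)Y$ with the common zero locus of the $g_i$, then observe that $\sigma$ is a ring automorphism of $R$ carrying the radical ideal $Q$ to the radical ideal $\langle g_1,\ldots,g_k\rangle$, and invoke the Nullstellensatz. The only quibble is notational: you overload $\sigma$ to mean both the automorphism of $R$ and its restriction $F^{-c}$ to $K$, which is harmless but worth flagging for clarity.
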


\begin{proof}
A point $a \in K^E$ lies in $(c \one)Y$ if and only if $(-c\one)a
\in Y$, i.e., if and only if $f_i(F^{c}(a))=0$ for all $i$, which is
equivalent to $g_i(a)=0$ for all $i$. So by Hilbert's Nullstellensatz
the vanishing ideal of $\alpha Y$ is the radical of the ideal generated
by the $g_i$. But the $g_i$ are the images of the $f_i$ under a ring
automorphism $R \to R$, and hence generate a radical ideal
since the $f_i$ do.
\end{proof}

\begin{lemma} \label{lm:SameMatroid}
For each $\alpha \in \Z^E$, the variety $\alpha X$
represents the same matroid as $X$.
\end{lemma}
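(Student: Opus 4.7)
The plan is to reduce the statement to the observation that the $\Z^E$-action commutes with coordinate projections, and then invoke the fact (just established) that this action is by homeomorphisms, so it preserves Zariski closures.

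More precisely, I will first note that for any $I \subseteq E$ and any $\alpha \in \Z^E$, writing $\alpha_I := (\alpha_i)_{i \in I} \in \Z^I$ for the restriction, the projection satisfies
\[ \pi_I(\alpha w) = (F^{-\alpha_i} w_i)_{i \in I} = \alpha_I \pi_I(w) \]
for every $w \in K^E$. Hence $\pi_I(\alpha X) = \alpha_I \pi_I(X)$. Applying the preceding lemma to $K^I$ in place of $K^E$, the map $\alpha_I : K^I \to K^I$ is a homeomorphism in the Zariski topology, so it commutes with taking closures:
\[ \overline{\pi_I(\alpha X)} = \overline{\alpha_I\, \pi_I(X)} = \alpha_I\, \overline{\pi_I(X)}. \]

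Since $\alpha_I$ is in particular a bijection of $K^I$ onto itself, $\overline{\pi_I(X)} = K^I$ if and only if $\alpha_I \overline{\pi_I(X)} = K^I$, i.e., if and only if $\overline{\pi_I(\alpha X)} = K^I$. By the defining property of an algebro-geometric representation, this says exactly that $I$ is independent in $M(\alpha X)$ if and only if $I$ is independent in $M(X)$, so $M(\alpha X) = M(X)$.

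There is no real obstacle here; all the nontrivial content (that each $\alpha$ acts by Zariski homeomorphisms, despite not being polynomial in negative coordinates) has already been packaged into the previous lemma, and what remains is simply to observe that this action is coordinate-wise, hence intertwines with every coordinate projection.
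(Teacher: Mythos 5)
Your proof is correct and takes essentially the same route as the paper's: both hinge on the observation that the coordinatewise $\Z^E$-action intertwines with the projections $\pi_I$, so that $\pi_I(\alpha X)=\alpha_I\,\pi_I(X)$, and then use that each $\alpha_I$ acts by Zariski homeomorphisms (the preceding lemma) to transfer density of projections back and forth. The only cosmetic difference is that the paper proves one direction and invokes symmetry via $-\alpha$, whereas you get the biconditional directly from the homeomorphism preserving closures.
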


\begin{proof}
If $I \subseteq E$ is independent in the matroid represented by $X$,
then the map $\pi_I:X \to K^I$ has a dense image. But the image of the
projection $\alpha X \to K^I$ equals $(\alpha|_I) \im \pi_I$, and is
hence also dense in $K^I$. So all sets independent for $X$
are independent for $\alpha X$, and the same argument with
$-\alpha$ yields the converse.
\end{proof}

\begin{lemma} \label{lm:NonSep}
Let $Y \subseteq K^E$ be an irreducible, closed subvariety with generic
point $\eta$ and let $j \in E$. Then $e_j \in T_\eta Y$ if and only
if the vanishing ideal of $Y$ is generated by polynomials in $x_j^p$
and the $x_i$ with $i \neq j$.
\end{lemma}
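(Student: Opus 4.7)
The plan is to translate the tangency condition into a differential condition on $Q$, the vanishing ideal of $Y$ in $R := K[x_i : i \in E]$. By definition of $T_\eta Y$ as the orthogonal complement in $K(Y)^E$ of the localization $K(Y)\otimes_{K[Y]} J_Y$, the vector $e_j$ lies in $T_\eta Y$ if and only if the image of $\partial f/\partial x_j$ in $K[Y] = R/Q$ vanishes for every $f \in Q$; equivalently, $\partial f/\partial x_j \in Q$ for all $f \in Q$.

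For the direction $(\Leftarrow)$ the argument is essentially one line: if $Q$ admits generators $g_1,\ldots,g_m$ each lying in the subring $R' := K[x_j^p,\ x_i\ :\ i\neq j]$, then $\partial g_k/\partial x_j = 0$, since the only occurrence of $x_j$ in $g_k$ is through $x_j^p$ and $\partial(x_j^p)/\partial x_j = p x_j^{p-1}$ vanishes in characteristic $p$. The product rule then forces $\partial f/\partial x_j \in Q$ for every $f \in Q$.

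The converse is the substantive step. Observe that $R$ is a free $R'$-module with basis $1, x_j, x_j^2, \ldots, x_j^{p-1}$, and set $Q'' := Q \cap R'$; the members of $Q''$ are exactly the polynomials in $Q$ of the required form. Given $f \in Q$, expand uniquely as $f = \sum_{l=0}^{p-1} f_l x_j^l$ with $f_l \in R'$. Since $R'$ is annihilated by $\partial/\partial x_j$, iterating the hypothesis gives $\partial^k f/\partial x_j^k = \sum_{l\geq k} \frac{l!}{(l-k)!} f_l x_j^{l-k} \in Q$ for all $k \geq 0$. I finish by reverse induction on $l$: assuming $f_{l+1},\ldots,f_{p-1}\in Q$, the element $\tilde f := f - \sum_{m>l} f_m x_j^m$ lies in $Q$ and has $x_j$-degree at most $l$, so $\partial^l \tilde f/\partial x_j^l = l!\,f_l \in Q$. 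Because $0 \leq l < p$ the factorial $l!$ is invertible in $K$, hence $f_l \in Q \cap R' = Q''$, and consequently $f$ lies in the $R$-ideal generated by $Q''$. The single point that must be checked carefully is precisely this invertibility of $l!$, which dictates that the correct basis of $R$ over $R'$ is $\{x_j^l : 0 \leq l < p\}$ and not any longer stretch; if one tried to push $l$ up to $p$ or beyond, the factorial would vanish and the inductive step would collapse.
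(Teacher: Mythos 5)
Your proof is correct, but it takes a genuinely different route from the paper's. The translation of $e_j \in T_\eta Y$ into the condition $\partial f/\partial x_j \in Q$ for all $f \in Q$, and the easy direction, coincide; the divergence is in the substantive direction. The paper differentiates a reduced Gr\"obner basis $G$ of $Q$: for $g \in G$, if $\partial g/\partial x_j$ were a nonzero element of $Q$, its leading monomial (which has the form $v/x_j$ for some monomial $v$ of $g$, hence is strictly smaller than $\mathrm{LM}(g)$) would have to be divisible by $\mathrm{LM}(g')$ for some $g' \in G \setminus \{g\}$, and then $v$ would be too, contradicting reducedness; so $\partial g/\partial x_j = 0$ for every $g \in G$. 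You instead exploit the free-module decomposition $R = \bigoplus_{l=0}^{p-1} R'\, x_j^l$ over $R' := K[x_j^p,\ x_i : i \neq j]$, iterate the derivative, and recover each $R'$-component $f_l$ of an arbitrary $f \in Q$ by downward induction, dividing by the unit $l!$. This shows $Q = R\cdot(Q \cap R')$. Your argument is more elementary and self-contained --- no Gr\"obner machinery is imported --- and it yields the slightly sharper conclusion that the generators can be taken to span $Q \cap R'$ rather than merely to lie in $R'$; the paper's argument is terser and exhibits a specific generating set (the reduced Gr\"obner basis itself). One small imprecision you should fix: $\tilde f = \sum_{m\leq l} f_m x_j^m$ does not literally have $x_j$-degree at most $l$, since the $f_m \in R'$ may carry arbitrarily high powers of $x_j$ through $x_j^p$. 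What actually makes $\partial^l \tilde f/\partial x_j^l = l!\,f_l$ true is that $\partial/\partial x_j$ annihilates $R'$, so only the explicit factors $x_j^m$ with $m \leq l$ are differentiated; state that instead.
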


\begin{proof}
If the ideal $Q$ of $Y$ is generated by polynomials in which all exponents
of $x_j$ are multiples of $p$, then $Q$ is stable under the derivation
$\frac{\partial}{\partial x_j}$. This means that the projection of
$J_Y \subseteq K[Y]^E$ onto the $j$-th coordinate is identically zero,
so that $e_i \perp (K(Y) \otimes_{K[Y]} J_Y)$. This proves the ``if'' direction.

For ``only if'' suppose that $e_j \in T_\eta Y$, let $G$ be a reduced Gr\"obner basis of $Q$ relative to
any monomial order, and let $g \in G$. Then $f:=\frac{\partial g}{\partial
x_j}$ is zero in $K[Y]$, i.e., $f \in Q$. Assume that $f$ is a nonzero
polynomial. Then the leading monomial $u$ of $f$ is divisible by the
leading monomial $u'$ of some element of $G \setminus \{g\}$. But $u$
equals $v/x_j$ for some monomial $v$ appearing in $g$, and hence $v$ is
divisible by $u'$; this contradicts the fact that $G$ is reduced. Hence
$\frac{\partial g}{\partial x_j}=0$, and therefore all exponents of $x_j$
in elements of $G$ are multiples of $p$.
\end{proof}

For any closed, irreducible subvariety $X\subseteq K^E$, let
$M(X,\alpha):=M(T_\xi \alpha X)$, where $\xi$ is the generic point of
$\alpha X$.

\begin{theorem} \label{thm:AlgFlock}
Let $K$ be algebraically closed of characteristic $p>0$, $X \subseteq K^E$
a closed, irreducible subvariety. Let $v \in X$ be such that for each $\alpha \in
\Z^E$, we have
\begin{equation} \label{eq:VeryGeneral} \tag{*}
\text{\em
$M(T_{\alpha v} \alpha X)= M(X,\alpha)$
}
\end{equation}
Then the assignment
$V:\alpha \mapsto V_\alpha:=T_{\alpha v} \alpha X$ is a Frobenius flock
that satisfies $M(V)=M(X)$.
\end{theorem}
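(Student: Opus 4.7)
The hypothesis (*) forces $\alpha v$ to be a smooth point of $\alpha X$ for every $\alpha \in \Z^E$ (since the tangent space matroid on $E$ has the correct rank $d := \dim X$), so each $V_\alpha = T_{\alpha v} \alpha X$ is a well-defined linear subspace of $K^E$ of dimension $d$. It remains to verify (FF2), (FF1), and the equality $M(V) = M(X)$. For (FF2), it suffices to establish the identity $T_{\one w} \one Y = \one T_w Y$ for a smooth point $w$ of any closed irreducible $Y \subseteq K^E$: by Lemma \ref{lm:EqconeX} the vanishing ideal of $\one Y$ is generated by the polynomials obtained from generators of the ideal of $Y$ by applying $F^{-1}$ to their coefficients, and since partial differentiation commutes with this action on coefficients the Jacobian at $\one w$ is the entrywise $F^{-1}$-image of the Jacobian at $w$; a short computation then shows that its kernel is precisely $\one T_w Y$. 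Applied to $Y = \alpha X$ and $w = \alpha v$, this gives (FF2).

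Axiom (FF1) is the heart of the matter. Set $Y := \alpha X$ and $Y' := (\alpha+e_i) X = e_i Y$, with $w := \alpha v$ and $w' := e_i w$, so that $V_\alpha = T_w Y$ and $V_{\alpha+e_i} = T_{w'} Y'$. The polynomial bijection $\psi: Y' \to Y$, $z \mapsto -e_i z$, has Jacobian at $w'$ equal to the diagonal matrix which is the identity on coordinates $j \neq i$ and zero on coordinate $i$ (because $p = 0$ in $K$). This gives $d\psi_{w'}(T_{w'} Y') \subseteq T_w Y \cap \{u \in K^E : u_i = 0\}$, whence $V_{\alpha+e_i}\del i \subseteq V_\alpha/i$. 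The reverse inclusion reduces to a dimension count: a direct calculation yields $\dim V_\alpha/i = d - 1 + [i \text{ is a loop of } M(V_\alpha)]$ and $\dim V_{\alpha+e_i}\del i = d - [e_i \in V_{\alpha+e_i}]$, so equality holds if and only if exactly one of the conditions ``$i$ is a loop of $M(V_\alpha)$'' and ``$e_i \in V_{\alpha+e_i}$'' is satisfied. By (*) each of these matroid conditions agrees with its counterpart at the generic point, so the claim can be checked there. Using Lemma \ref{lm:NonSep} together with the explicit description of the ideal of $e_i Y$ provided by Lemma \ref{lm:EqAlphaX}, the second condition $e_i \in T_\xi Y'$ is equivalent to $\bar x_i \notin K(Y)^p$; on the other hand, the first condition, that $i$ is a loop of $M(T_\eta Y)$, translates to $d \bar x_i = 0$ in $\Omega_{K(Y)/K}$, which since $K$ is perfect is equivalent to $\bar x_i \in K(Y)^p$. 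These two conditions are complementary, which completes the proof of (FF1).

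Finally, for $M(V) = M(X)$: the inclusion $M(V) \subseteq M(X)$ follows at once from the weak-image lemma stated in the excerpt (giving $\BB(M(V_\alpha)) \subseteq \BB(M(\alpha X))$ for every $\alpha$) combined with Lemma \ref{lm:SameMatroid} (so $M(\alpha X) = M(X)$) and Lemma \ref{lem:supp}. For the reverse inclusion, I appeal to the classical theorem of MacLane that every finitely generated field extension of a perfect field admits a separating transcendence basis: given any basis $B$ of $M(X)$, one can use Frobenius shifts on the coordinates outside $B$ to remove any residual inseparability, producing an $\alpha \in \Z^E$ for which $B$ is a separating transcendence basis of $\alpha X$ and hence a basis of $M(T_\xi \alpha X) = M(V_\alpha)$. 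The principal obstacle in this plan is the dichotomy at the core of (FF1), namely that the single ring-theoretic condition $\bar x_i \in K(Y)^p$ controls both whether $i$ is a loop of $M(V_\alpha)$ and whether $e_i$ lies in $V_{\alpha+e_i}$.
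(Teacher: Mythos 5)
Your proof is correct, and while (FF2) and the verification of $M(V)=M(X)$ essentially coincide with the paper's argument, your treatment of (FF1) takes a genuinely different route. The paper handles the hard case (where $e_j\in V_{\alpha+e_j}$) by an explicit computation: it uses Lemma~\ref{lm:NonSep} and Lemma~\ref{lm:EqAlphaX} to pass from generators of the ideal of $(\alpha+e_j)X$ in $x_j^p$ to generators of the ideal of $\alpha X$, and then compares the two Jacobi matrices column by column at the specific point $(-e_j)y$ to conclude that both sides of (FF1) have dimension $d-1$. You instead reduce (FF1) to a clean dichotomy — exactly one of ``$i$ is a loop of $M(V_\alpha)$'' and ``$e_i\in V_{\alpha+e_i}$'' must hold — then transfer both conditions to the generic point via (*), and observe that they are both controlled by the single field-theoretic condition $\bar x_i\in K(\alpha X)^p$: the loop condition via $d\bar x_i = 0$ in $\Omega_{K(\alpha X)/K}$, and the coloop condition (at $\alpha+e_i$) via Lemma~\ref{lm:NonSep}. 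This is a tidy conceptual repackaging that makes the role of $p$-th powers transparent and unifies both cases. One caveat: the equivalence ``$e_i\in T_\xi((\alpha+e_i)X)$ iff $\bar x_i\notin K(\alpha X)^p$'' does not follow immediately from Lemmas~\ref{lm:NonSep} and~\ref{lm:EqAlphaX}; it additionally requires showing that the ideal of $(\alpha+e_i)X$ is generated by polynomials in $x_i^p$ precisely when $K[\alpha X][T]/(T^p-\bar x_i)$ is reduced, which in turn happens iff $\bar x_i\notin K(\alpha X)^p$. Likewise the fact that $da=0$ in $\Omega_{L/K}$ iff $a\in L^p$ (for $L/K$ finitely generated, $K$ perfect) is asserted but not proved. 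Both are standard and correct, but a complete write-up should supply them. Finally, for the converse inclusion $M(X)\subseteq M(V)$, the invocation of MacLane's theorem is a bit of a detour: what you actually use is the Frobenius-shift construction (exactly as in the paper), which directly produces an $\alpha$ making $B$ a separating transcendence basis of $K(\alpha X)$ over $K$.
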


\begin{definition}
This Frobenius flock is called the {\em Frobenius flock
associated to the pair $(X,v)$}.
\end{definition}

\begin{proof}[Proof of Theorem~\ref{thm:AlgFlock}.]
For each $\alpha \in \Z^E$ we have $M(T_{\alpha v} \alpha
X)=M(T_\xi \alpha X)$, and this implies that $\dim_K V_\alpha=\dim X=:d$.

Next, for $j \in E$ the action by $(-e_j) \in \Z^E$ sends $Y:=(\alpha+e_j) X$
into $(-e_j) Y=\alpha X$ by raising the $j$-th coordinate to the
power $p$. Hence the derivative of this map at $y:=(\alpha+e_j) v$, which
is the projection onto $e_j^\perp$ along $e_j$, maps $V_{\alpha+e_j}=T_y Y$
into $V_{\alpha}=T_{(-e_j) y} (-e_j) Y$, and therefore $V_{\alpha + e_j} \del j
 \subseteq V_\alpha /j$. If the left-hand side has dimension
$d$, then equality holds, and (FF1) follows.

If not, then $e_j \in T_y Y$, i.e., $j$ is a co-loop in $M(T_y Y)$. Then
by \eqref{eq:VeryGeneral} $j$ is also a co-loop in $M(T_\eta Y)$, where
$\eta$ is the generic point of $Y$. By Lemma~\ref{lm:NonSep} the
ideal of $Y$ is generated by polynomials $f_1,\ldots,f_r$ in which
all exponents of $x_j$ are multiples of $p$. By Lemma~\ref{lm:EqAlphaX},
replacing $x_j^p$ by $x_j$ in these generators yields generators
$g_1,\ldots,g_r$ of the ideal of $e_j Y$.
Now the Jacobi matrix of $g_1,\ldots,g_r$
at $(-e_j) y$ equals that of $f_1,\ldots,f_r$ at $y$ except that the
$j$-th column may have become nonzero. But this means that $
V_\alpha/j$ has dimension equal to that of $V_{\alpha+e_j} \setminus j$, namely,
$d-1$. Hence (FF1) holds in this case, as well.

For (FF2), let $Z:=\alpha Y$ and $z:=\alpha y$, pick any generating set $f_1,\ldots,f_r$ of $I_Z$, raise all
$f_i$ to the $(1/p)$-th power, and replace each $x_j$ in the result
by $x_j^p$. By Lemma~\ref{lm:EqconeX}, the resulting polynomials
$g_1,\ldots,g_r$ generate $I_{\one Z}$. The Jacobi matrix of
$g_1,\ldots,g_r$ at $\one z$ equals that of $f_1,\ldots,f_r$ at $z$
except with $F^{-1}$ applied to each entry.  Hence $V_{\alpha+\one}=\one
V_\alpha$ as claimed. This proves that $V$ is a Frobenius flock.

We next verify that $M(V)=M(X)$;
by Lemma~\ref{lm:SameMatroid} the right-hand side is also $M(\alpha X)$
for each $\alpha \in \Z^E$.  Assume that $I$ is independent
in $M(V_\alpha)$ for some $\alpha$. This means that the projection
$T_{\alpha v} \alpha X \to K^I$ is surjective and since $\alpha v$
is a smooth point of $\alpha X$ by \eqref{eq:VeryGeneral} we find that
the projection $\alpha X \to K^I$ is dominant, i.e., $I$ is independent
in $M(X)$.

Conversely, assume that $I$ is a basis for $M(X)$, so that $|I|=d$. Then
$K(X)$ is an algebraic field extension of $K(x_i :  i \in I)=:K'$. If
this is a separable extension, then by \cite[AG.17.3]{Borel1991} the
projection $T_u X \to K^I$ is surjective (i.e., a linear isomorphism)
for general $u \in X$, hence also for $u=v$ by \eqref{eq:VeryGeneral}. If
not, then for each $j \in \bar{I}$ let $\alpha_j \in \Z_{\geq 0}$ be
minimal such that $x_j^{(p^{\alpha_j})}$ {\em is} separable over $K'$,
and set $\alpha_i=0$ for $i \in I$. Then the extension $K' \subseteq
K((-\alpha) X)$ {\em is} separable, and hence the projection
$T_{(-\alpha) v}: (-\alpha X) \to K^I$ surjective.
\end{proof}

For {\em fixed} $\alpha \in \Z^E$, the condition \eqref{eq:VeryGeneral}
holds for $v$ in some open dense subset of $X$, i.e., for {\em general}
$v$ in the language of algebraic geometry---indeed, it says that certain
subdeterminants of the Jacobi matrix that do not vanish at the generic
point of $\alpha X$ do not vanish at the point $\alpha v$ either. However,
we require that \eqref{eq:VeryGeneral} holds for {\em all} $\alpha \in
\Z^E$. This means that $v$ must lie outside a countable union of proper
Zariski-closed subsets of $X$, i.e., it must be {\em very general}
in the language of algebraic geometry.  {\em A priori}, that $K$ is
algebraically closed does not imply the existence of such a very general
point $v$. This can be remedied by enlarging $K$.  For instance, if we
change the base field to $K(X)$, then the generic point of $X$ satisfies
\eqref{eq:VeryGeneral}; alternatively, if $K$ is taken uncountable
(in addition to algebraically closed), then a very general $v$ also
exists. But in fact, as we will see in Theorem~\ref{cor:AlgFlock},
certain general finiteness properties of flocks imply that, after all,
it {\em does} suffice that $K$ is algebraically closed.


\begin{corollary} \label{cor:MatFlock}
Let $K$ be algebraically closed of characteristic $p>0$, and let $X \subseteq K^E$
be a closed, irreducible subvariety. Then
$M: \alpha\mapsto M(X,\alpha)$ is a matroid flock with support matroid $M(X)$.
\end{corollary}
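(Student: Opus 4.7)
The plan is to reduce the corollary directly to Theorem~\ref{thm:AlgFlock}. If a point $v \in X$ satisfying the condition~\eqref{eq:VeryGeneral} for every $\alpha \in \Z^E$ is available, then the associated Frobenius flock $V: \alpha \mapsto T_{\alpha v}\alpha X$ has
\[ M(V_\alpha) = M(T_{\alpha v}\alpha X) = M(T_\xi \alpha X) = M(X,\alpha), \]
so $\alpha \mapsto M(X,\alpha)$ is the matroid flock associated with $V$, and its support matroid is $M(V)=M(X)$. The only obstruction is that such a very general $v$ need not exist in $X(K)$ when $K$ is countable; this is the obstacle to address.

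To circumvent it, I would base change to the algebraic closure $\tilde K$ of the function field $K(X)$, and set $\tilde X := X \times_K \tilde K \subseteq \tilde K^E$. The tautological $\tilde K$-point $v$ of $\tilde X$ corresponding to the generic point of $X$ has the property that, for each $\alpha \in \Z^E$, the point $\alpha v$ is the generic point of $\alpha \tilde X$. Hence~\eqref{eq:VeryGeneral} holds trivially, and Theorem~\ref{thm:AlgFlock} produces a Frobenius flock $\tilde V$ over $\tilde K$ whose associated matroid flock is $\alpha \mapsto M(\tilde X,\alpha)$ with support matroid $M(\tilde X)$.

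The key step is the base-change invariance $M(\tilde X,\alpha) = M(X,\alpha)$ and $M(\tilde X)=M(X)$. The explicit descriptions of the vanishing ideal of $\alpha X$ in Lemmas~\ref{lm:EqAlphaX} and~\ref{lm:EqconeX} are manifestly compatible with extension of scalars, so $\alpha \tilde X = (\alpha X)\times_K \tilde K$. Both matroids in question are then read off from ranks of appropriate submatrices of the Jacobi matrix at the generic point, respectively from the dominance of coordinate projections; and these quantities are preserved under faithfully flat extension of algebraically closed base fields. This is the main technical content, although it is entirely standard.

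Once this invariance is in hand, the matroid-flock axioms (MF1) and (MF2) for $\alpha \mapsto M(X,\alpha)$ are inherited from $\tilde V$, and the identification of $M(X)$ as the support matroid follows from Theorem~\ref{thm:AlgFlock} applied over $\tilde K$ together with Lemma~\ref{lem:supp}. I expect no other difficulty; the whole corollary is a clean formal consequence of Theorem~\ref{thm:AlgFlock}, modulo the base-change step used to manufacture a very general point.
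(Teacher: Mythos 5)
Your proposal takes essentially the same route as the paper: base-change to (an algebraic closure of) the function field $K(X)$ so that the generic point becomes a very general point, then apply Theorem~\ref{thm:AlgFlock}. You are in fact slightly more careful than the paper's one-paragraph proof, which glosses over both the need to pass to an algebraic closure of $K(X)$ and the base-change invariance of $\alpha\mapsto M(X,\alpha)$ that you explicitly verify.
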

\proof Let $v$ be the generic point of $X$ over the enlarged base field $K(X)$. Then the Frobenius flock $V$
associated with $(X,v)$ satisfies $M(V_\alpha)=M(X,\alpha)$. As $V$ is a Frobenius flock, the assignment
$\alpha\mapsto M(V_\alpha)$, and hence $M$, is a matroid flock. By Theorem \ref{thm:AlgFlock}, the support matroid of $V$, and hence of its matroid flock $M$, is $M(X)$. \endproof

We now prove that under
certain technical assumptions, if $v$ satisfies \eqref{eq:VeryGeneral}
at some $\alpha \in \Z^E$, then it also satisfies \eqref{eq:VeryGeneral}
at $\alpha - e_I$. This will reduce the number
of conditions on $v$ from countable to finite, whence ensuring that a
general $v$ satisfies them.


\begin{lemma} \label{lm:VeryGeneral}
Let $v \in X$ satisfy \eqref{eq:VeryGeneral} for some $\alpha \in \Z^E$,
and let $I \subseteq E$ be such that $M(X, \alpha)=M(X, \alpha
-e_I)$. Then $v$ satisfies \eqref{eq:VeryGeneral} for $\alpha-e_I$.
\end{lemma}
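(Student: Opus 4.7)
First, I would extract the matroid-theoretic consequence of the hypothesis. By Corollary~\ref{cor:MatFlock} the map $\beta\mapsto M(X,\beta)$ is a matroid flock, and applying Lemma~\ref{lem:step} to this flock at $\alpha-e_I$ with $J=I$ yields $\lambda_N(I)=0$, where $N:=M(X,\alpha)=M(X,\alpha-e_I)$. Hence $N$ decomposes as $N|_{\overline{I}}\oplus N|_I$. Writing $y:=\alpha v$, the hypothesis \eqref{eq:VeryGeneral} at $\alpha$ says $M(T_y\alpha X)=N$, and so $T_y\alpha X$ inherits a direct-sum decomposition $T_y\alpha X=W_1\oplus W_2$, with $W_1\subseteq\{w_I=0\}$ of dimension $r_N(\overline{I})$ and $W_2\subseteq\{w_{\overline{I}}=0\}$ of dimension $r_N(I)$; an analogous splitting holds at the generic points of $\alpha X$ and $(\alpha-e_I)X$.

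Next, I would analyze the polynomial map $\phi:=(-e_I):\alpha X\to(\alpha-e_I)X$. Its differential at any point equals the projection $K^E\to\{w_I=0\}$, since the derivative of $z\mapsto z^p$ vanishes in characteristic $p$. Writing $y':=(\alpha-e_I)v=\phi(y)$, one has $d\phi_y(T_y\alpha X)=d\phi_y(W_1)\oplus d\phi_y(W_2)=W_1$, producing the inclusion $W_1\subseteq T_{y'}(\alpha-e_I)X$. Moreover, by the chain rule, for any $g$ in the vanishing ideal of $(\alpha-e_I)X$ the polynomial $\phi^*g$ lies in the vanishing ideal of $\alpha X$, and its Jacobian at $y$ has vanishing $I$-columns with $\overline{I}$-columns equal to the $\overline{I}$-columns of $\nabla g$ at $y'$. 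This pins down the $\overline{I}$-part of the Jacobian of $(\alpha-e_I)X$ at $y'$ in terms of the Jacobian of $\alpha X$ at $y$, which is already fully controlled via the splitting and the matroid $N$.

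The remaining and most delicate task is to supply the missing $r_N(I)$ dimensions in $T_{y'}(\alpha-e_I)X$ living in $\{w_{\overline{I}}=0\}$ that realize the $I$-block $N|_I$. For each coloop of $N$ contained in $I$, Lemma~\ref{lm:NonSep} combined with Lemma~\ref{lm:EqAlphaX} produces a generator of the ideal of $(\alpha-e_I)X$ whose Jacobian at $y'$ contributes a required $I$-direction. When $N|_I$ is not free---i.e., $I$ contains elements that are neither loops nor coloops of $N$---this direct route fails, and one must instead work in the saturated local Jacobi module at the smooth point $y$ (Lemma~\ref{lm:Saturated}) to lift the splitting of $J_{\alpha X}$ visible at the generic point down to local generators that descend through $\phi^*$ and produce the needed $I$-supported Jacobians at $y'$. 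This is the main technical obstacle; a natural way around it is to decompose $I$ into the connected components of $N|_I$ via the walk lemma (Lemma~\ref{lem:walk}) and induct, reducing to blocks where either Lemma~\ref{lm:NonSep} applies directly or the $I$-component is a single loop of $N$ and $(\alpha-e_I)X$ is literally obtained from $\alpha X$ by relabeling a constant coordinate. Once a $d$-dimensional tangent space at $y'$ realizing $N$ is assembled, the desired equality $M(T_{y'}(\alpha-e_I)X)=N$ follows, using also that $M(T_{y'}(\alpha-e_I)X)$ is \emph{a priori} a weak image of $M(T_{\eta'}(\alpha-e_I)X)=N$, so that both inclusions of bases are forced.
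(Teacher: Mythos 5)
You correctly identify the key input: Lemma~\ref{lem:step} gives $\lambda_N(I)=0$, so $N$ splits as a direct sum across $I$ and $\overline I$, and you also correctly observe that the differential of $\phi=(-e_I)$ kills the $I$-coordinates, yielding $W_1 = d\phi_y(T_y\,\alpha X) \subseteq T_{y'}(\alpha-e_I)X$. This is the ``easy'' half. But from there your argument does not close: you candidly flag the case where $N|_I$ is not free as ``the main technical obstacle'' and offer only tentative workarounds (handling coloops via Lemma~\ref{lm:NonSep}, decomposing $I$ into connected components, inducting). None of these, as stated, covers the case where $N|_I$ has a component of rank and corank both at least one (e.g.\ a $U_{2,4}$-block inside $I$), and the walk lemma does not further decompose a connected block. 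Moreover Lemma~\ref{lm:NonSep} is a statement about the generic tangent space, not the one at $y$, so invoking it coloop-by-coloop does not directly control $T_{y'}$; and the $\phi^*$ pullback goes in the wrong direction for producing $I$-supported generators of the ideal of $(\alpha-e_I)X$ without first establishing an exponent-divisibility property.

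The missing step — and the heart of the paper's argument — is this: after passing to the local ring $S$ of $\alpha X$ at $y$ and using Lemma~\ref{lm:Saturated} to transfer the block decomposition of the generic Jacobi module to the saturated local module $M=S\otimes_{K[Y]} J_Y$, one obtains generators $f_1,\dots,f_r$ and $g_1,\dots,g_s$ of the local ideal with $\partial f_i/\partial x_j=0$ in $S$ for $j\in\overline I$ and $\partial g_i/\partial x_j=0$ in $S$ for $j\in I$. Condition \eqref{eq:VeryGeneral} at $\alpha$ then forces the $K$-rank structure of the evaluated Jacobian $A(y)$ to match that of $A$ over $K(Y)$, and from this one reads off that the exponents of $x_j$ ($j\in I$) in the $g_i$ and of $x_j$ ($j\in\overline I$) in the $f_i$ are all multiples of $p$. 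This divisibility is precisely what lets one explicitly write down new generators $g_i'$ (substituting $x_j$ for $x_j^p$ over $I$) and $f_i'$ (raising $f_i$ to the $p$-th power and then substituting over $I$) for the ideal of $(\alpha-e_I)X$ near $y'$, whose Jacobian at $y'$ has full rank $r+s$ and represents a matroid with at least as many bases as $M(T_\eta\,\alpha X)=N$, hence exactly $N$. Your proposal never reaches the exponent-divisibility consequence of \eqref{eq:VeryGeneral}, which is the step that handles arbitrary $N|_I$; without it the argument has a genuine gap.
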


\begin{proof}
Set $Y:=\alpha X$ and set $W:=K(Y) \otimes_{K[Y]} J_Y \subseteq K(Y)^E$.
By Lemma~\ref{lem:step} applied to the matroid flock $\alpha\mapsto M(X, \alpha)$, we
 find that the connectivity of
$I$ in $M(W)=M(X,\alpha)$ is zero, and hence that $W=(W/I) \times (W/\bar{I})$.

We claim that the same decomposition happens over the local ring $S$ of
$Y$ at $\alpha v$. Let $M:=S \otimes_{K[Y]} J_X \subseteq S^E$.  Let $m
\in M$ and write $m=m_1+m_2$ where $m_1,m_2$ have nonzero entries only in
$I,\bar{I}$, respectively. By the decomposition of $W$, we have $m_1,m_2
\in W$, and by clearing denominators it follows that $s_1m_1,s_2m_2 \in
J_Y \subseteq M$ for suitable $s_1,s_2 \in K[Y] \subseteq S$. Then by
Lemma~\ref{lm:Saturated}, $m_1,m_2$ themselves already lie in $M$. Thus
$M=(M/I) \times (M/\bar{I})$, as claimed.

This means that there exist generators $f_1,\ldots,f_r,g_1,\ldots,g_s$
of the maximal ideal of $S$ with $r+s=|E|-\dim Y$ and such that
$\frac{\partial f_i}{\partial x_j}=0 \in S$ for all $i=1,\ldots,r$ and $j \in
\bar{I}$ and $\frac{\partial g_i}{\partial x_j}=0 \in S$ for all $i=1,\ldots,s$
and $j \in I$. Thus the Jacobi matrix of $f_1,\ldots,f_r,g_1,\ldots,g_t$
looks as follows:
\begin{center}
\includegraphics{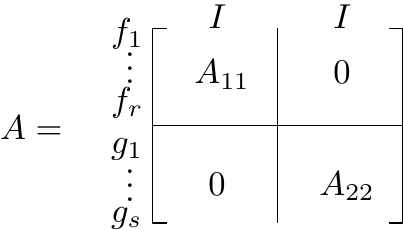}
\end{center}
By \eqref{eq:VeryGeneral} the $K$-row space of $A(y)$
defines the same matroid as the $K(Y)$-row space of $A$ itself.

It follows that the exponents of $x_j$ with $j \in I$ in the $g_i$
are multiples of $p$, and so are the exponents of the $x_j$ with $j \in
\bar{I}$ in the $f_i$. Let $g'_i$ be the polynomial obtained from $g_i$
by replacing $x_j^p$ with $x_j$ for $j \in I$, and let $f'_i$ be the
polynomial obtained from $f_i^p$ by replacing each $x_j^p$ with $x_j$
for $j \in I$. Then the $f'_i$ and $g'_i$ lie in the maximal ideal of
$(-e_I) Y$ at $(-e_I) y$, and their Jacobi matrix looks like this:
\begin{center}
\includegraphics{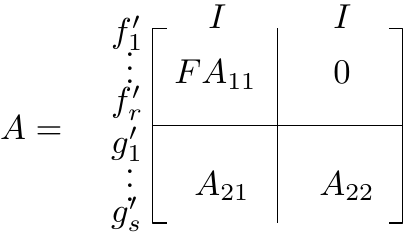}
\end{center}
Here $FA_{11}$ is the matrix over $S$ whose entries are obtained by
applying the Frobenius automorphism to all coefficients.  In particular,
the evaluation $A'((-e_I) y)$ has full $K$-rank $r+s$, $(-e_I) Y$ is smooth
at $(-e_I) y$, and the $f_i'$ and the $g_i'$ generate the maximal
ideal of the local ring of $(-e_I) Y$ at $(-e_I) y$.
Moreover, if a subset $I'$
of the columns of $A$ is independent, then the columns
labelled by $I'$
are also independent in $A'((-e_I) y)$.  This means that
$M(T_{(-e_I)y} (-e_I) Y)$ has at
least as many bases as $M(T_\xi Y)=M(T_\eta e_I Y)$, but it cannot have
more, so that $v$ satisfies \eqref{eq:VeryGeneral} at $\alpha-e_I$.
\end{proof}

\begin{theorem} \label{cor:AlgFlock}
Let $K$ be algebraically closed of characteristic $p>0$ and let
$X \subseteq K^E$ be an irreducible closed subvariety of
dimension $d$. Then
for a general point $v \in X$ the map $V:\alpha \mapsto
T_{\alpha v} \alpha X$ is a matroid flock of rank $d$ over
$K$ such that $M(X,\alpha)=M(V_{\alpha})$ for each $\alpha
\in \Z^E$.
\end{theorem}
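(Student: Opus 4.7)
The strategy is to reduce the infinitely many conditions \eqref{eq:VeryGeneral} --- one per $\alpha \in \Z^E$ --- to a finite list whose joint validity defines an open dense subset of $X$, after which Theorem~\ref{thm:AlgFlock} delivers the desired Frobenius flock. Finiteness will come from Corollary~\ref{cor:MatFlock} and Theorem~\ref{thm:mf_char}: the map $\alpha \mapsto M(X,\alpha)$ is a matroid flock encoded by a single valuation $\nu$, and by the cell description of Section~\ref{sec:Cells} together with (MF2) only finitely many distinct matroids $N$ appear as $M(X,\alpha)$. For each such $N$ I would pick a representative $\alpha_N$; the condition \eqref{eq:VeryGeneral} at $\alpha_N$ is the non-vanishing of certain subdeterminants of the Jacobi matrix at $\alpha_N v$ which do not vanish at the generic point of $\alpha_N X$, so it cuts out a nonempty Zariski-open $U_N \subseteq X$. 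Taking $U := \bigcap_N U_N$ gives a nonempty open subset of $X$.

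The main step is to show that for $v \in U$ and any $\alpha \in \Z^E$, condition \eqref{eq:VeryGeneral} at $\alpha$ follows from \eqref{eq:VeryGeneral} at $\alpha_N$ where $N = M(X,\alpha)$. I would invoke Lemma~\ref{lem:walk} to produce a walk $\alpha_N = \gamma^0, \gamma^1, \ldots, \gamma^k = \alpha$ inside the cell $\{\beta : M(X,\beta) = N\}$ with steps $\gamma^i - \gamma^{i-1} = \pm e_{J_i}$, and then propagate \eqref{eq:VeryGeneral} along this walk by induction. A $-e_{J_i}$-step is handled directly by Lemma~\ref{lm:VeryGeneral}, since the matroid is $N$ at both endpoints. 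A $+e_{J_i}$-step, which Lemma~\ref{lm:VeryGeneral} does not directly address, I would split as $\gamma^{i-1} \to \gamma^{i-1} - e_{\overline{J_i}} \to (\gamma^{i-1} - e_{\overline{J_i}}) + \one = \gamma^{i-1} + e_{J_i}$. The first substep is a $-e_{\overline{J_i}}$-step whose target still has matroid $N$, because $(\gamma^{i-1} - e_{\overline{J_i}}) + \one = \gamma^{i-1} + e_{J_i}$ has matroid $N$ by assumption and (MF2); thus Lemma~\ref{lm:VeryGeneral} applies. The second substep is a $+\one$-shift, and \eqref{eq:VeryGeneral} is preserved because the identity $V_{\gamma + \one} = \one V_{\gamma}$ is established in the (FF2)-part of the proof of Theorem~\ref{thm:AlgFlock} without any appeal to \eqref{eq:VeryGeneral}, while the semi-linear $\one$-action on $K^E$ preserves the matroid of any $K$-linear subspace.

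Once \eqref{eq:VeryGeneral} has been verified at every $\alpha \in \Z^E$ for each $v \in U$, Theorem~\ref{thm:AlgFlock} concludes that $V : \alpha \mapsto T_{\alpha v}\, \alpha X$ is a Frobenius flock of rank $d$ with $M(V_\alpha) = M(X,\alpha)$ throughout. The main obstacle is the asymmetric direction in Lemma~\ref{lm:VeryGeneral} --- it propagates $\alpha \Rightarrow \alpha - e_I$ but not $\alpha \Rightarrow \alpha + e_I$ --- which is precisely why the two-step detour through the $+\one$-shift is needed, and the verification that the intermediate point $\gamma^{i-1} - e_{\overline{J_i}}$ remains in the same matroid-flock cell is the careful (MF2)-bookkeeping described above.
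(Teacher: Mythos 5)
Your proof is correct and follows essentially the paper's argument: pick one representative $\alpha$ per matroid cell, note that each condition~\eqref{eq:VeryGeneral} is Zariski-open, connect any $\alpha$ to its representative via Lemma~\ref{lem:walk}, and propagate \eqref{eq:VeryGeneral} along the walk using Lemma~\ref{lm:VeryGeneral} together with the $+\one$-shift. You are somewhat more explicit than the paper in decomposing a $+e_J$ step into $-e_{\overline{J}}$ followed by $+\one$, which the paper condenses into the phrase ``moves of the form $\alpha\to\alpha+\one$ or $\alpha\to\alpha-e_I$.''
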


\begin{proof}
By Theorem~\ref{thm:AlgFlock} it suffices to prove that there exists
a $v \in X$ satisfying \eqref{eq:VeryGeneral} at every $\alpha \in
\Z^E$. For some field extension $K' \supseteq K$ there does exist a
$K'$-valued point $v' \in X(K')$ that satisfies
\eqref{eq:VeryGeneral} at every $\alpha$,
and we may form the Frobenius flock $V'$ associated to $(X,v')$ over $K'$.

For each matroid $M'$ on $E$ the points $\alpha \in \Z^E$ with
$M(V'_\alpha)=M'$ are connected to each other by means of moves of
the form $\alpha\to \alpha+\one$ or $\alpha \to \alpha-e_I$ for some subset $I
\subseteq E$ of connectivity $0$ in $M'$; see
Lemma~\ref{lem:walk}.  Hence by Lemma~\ref{lm:VeryGeneral}, for a $v
\in X(K)$ to satisfy \eqref{eq:VeryGeneral} for all $\alpha \in \Z^E$
it suffices that $v$ satisfies this condition for one representative
$\alpha$ for each matroid $M'$. Since there are only finitely many matroids $M'$ to consider,
we find that, after all, a general $v \in X(K)$ suffices these conditions.
\end{proof}

Observe the somewhat subtle structure of this proof: apart from
commutative algebra, it also requires the entire combinatorial machinery
of flocks.

\begin{example}
Let $E=\{1,2,3,4\}$ and consider the polynomial map $\phi:K^2 \to
K^4$ defined by $\phi(s,t)=(s,t,s+t,s+t^{(p^g)})$ where $p=\cha K$
and $g>0$. This is a morphism of (additive) algebraic groups, hence
$X:=\im \phi$ is closed. The polynomials in the parameterisation $\phi$
are pairwise algebraically independent, so that $M(X)$ is the uniform
matroid on $E$ of rank $2$.
\begin{figure}
\begin{center}
\includegraphics[width=.5\textwidth]{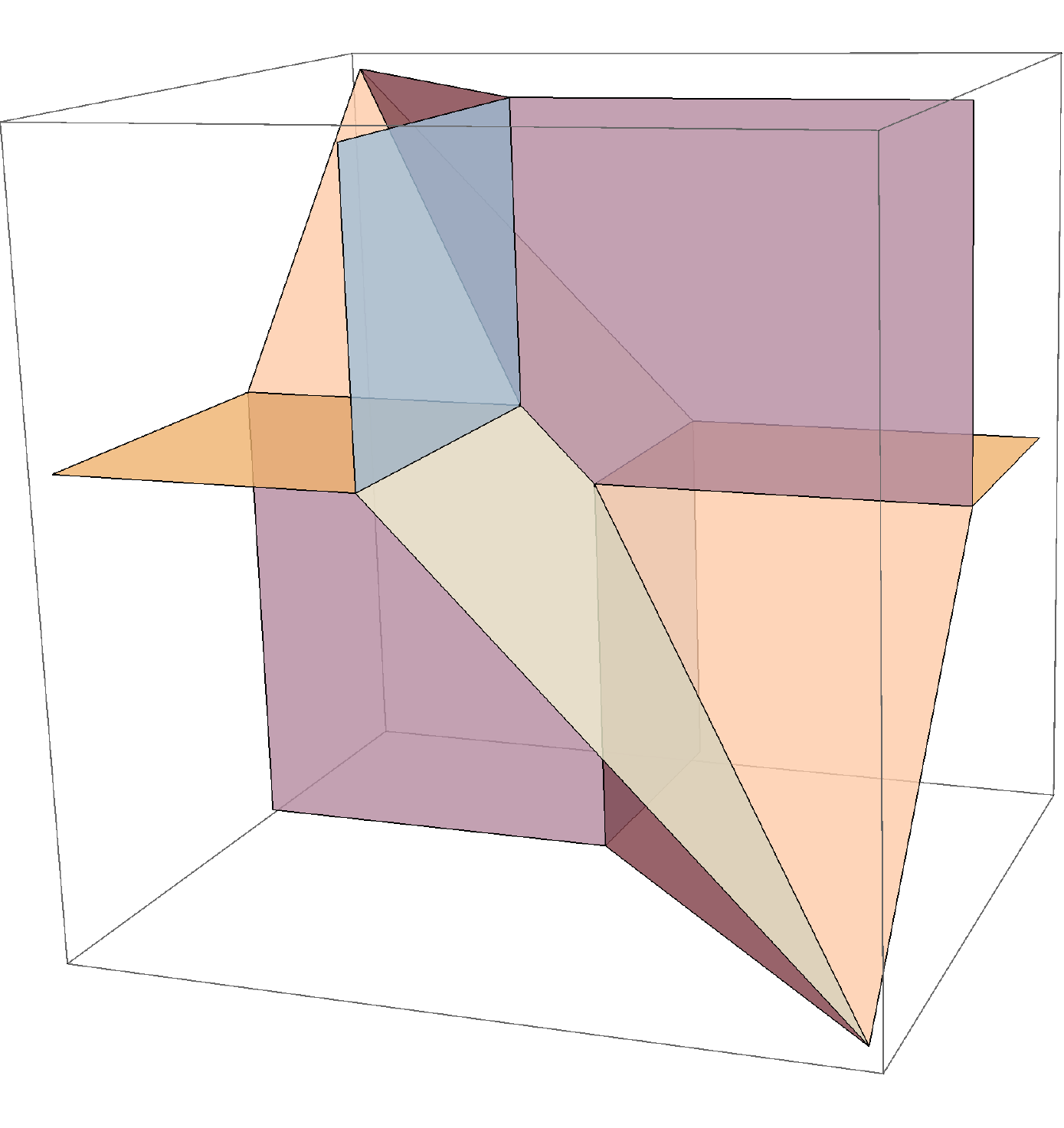}
\caption{The cell decomposition for a matroid flock;
the two zero-dimensional cells are $0$ and $-ge_2-ge_3$.}
\label{fig:U24}
\end{center}
\end{figure}

One can verify that the point $0$ is general in the sense of
Theorem~\ref{cor:AlgFlock}, and
\[
T_0 X= \im d_0 \phi = \text{the row space of }
\begin{bmatrix}
1 & 0 & 1 & 1 \\
0 & 1 & 1 & 0
\end{bmatrix}.
\]
Note that in $M(T_0 X)$ the elements $1$ and $4$ are
parallel. Compute
\[
(-e_2-e_3)X=\{(s,t^p,s^p+t^p,s+t^{(p^g)}) :  s,t \in K\}=
\{(s,t,s^p+t,s+t^{(p^{g-1})}) :  s,t \in K\};
\]
so
\[
T_0 (-e_2-e_3)X=\text{the row space of }
\begin{bmatrix}
1 & 0 & 0 & 1 \\
0 & 1 & 1 & 0
\end{bmatrix}.
\]
Here, not only $1$ and $4$ are parallel, but also $2$ and
$3$. We see the same matroid for $(-ke_2-ke_3)X$ with
$k=2,\ldots,g-1$. But
$(-ge_2-ge_3)X=\{(s,t,s^{p^g}+t,s+t) :  s,t \in K\}$, so
\[ T_0 (-ge_2-ge_3) X=\text{the row space of }
\begin{bmatrix}
1 & 0 & 0 & 1 \\
0 & 1 & 1 & 1
\end{bmatrix}.
\]
Here only $2$ and $3$ are parallel. The alcoved polytopes from
Subsection~\ref{sec:Cells}, intersected with the hyperplane where one
of the coordinates is zero, are depicted in
Figure~\ref{fig:U24}.  \hfill $\clubsuit$
\end{example}

\section{Algebraic and linear matroids}
If $X$ is an algebraic matroid representation, and $M: \alpha\mapsto M(X,\alpha)$ is the associated matroid flock (Corollary \ref{cor:MatFlock}), then we call $\nu^X:=\nu^M$ the {\em Lindstr\"om valuation} of $X$. In this section, we consider the role of the Lindstr\"om valuation in the relation between algebraic representability in characteristic $p$, and linear representability in characteristics $p$ and $0$.

\subsection{Rational matroids} \label{sec:Rational}
Matroids which are linear over $\mathbb{Q}$ are algebraic in each positive
characteristic. We will describe the construction to obtain an algebraic
representation $X$ from a linear representation over $\mathbb{Q}$,
and then express the flock valuation of $X$ in terms of the linear
representation over $\mathbb{Q}$.

Suppose $N=(E,\BB)$ is linear over $\mathbb{Q}$, and $K$ is an
algebraically closed field of characteristic $p$. As $N$ is rational,
there exists a subspace $W\subseteq \mathbb{Q}^E$ so that $N=M(W)$.

Let $A \in \Z^{d \times E}$ be any matrix whose rows form a basis of
$W$, let $a_j, j \in E$ be the columns of $A$, and consider the homomorphism of algebraic tori $\phi: (K^*)^d \to
(K^*)^E$ given by $\phi(t)=(t^{a_j})_{j \in J}$.  Then $Y=Y(W):=\im
\phi$ is a closed subtorus in $(K^*)^E$, independent of the choice of
$A$, of dimension $d$. Let $X=X(W)$ be its Zariski closure in $K^E$,
a $d$-dimensional irreducible variety.

\begin{lemma}
The matroids $M(W)$ and $M(X)$ coincide.
\end{lemma}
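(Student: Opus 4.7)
The plan is to match up the independent sets of $M(W)$ and $M(X)$ directly in terms of the integer matrix $A$, whose rows span $W$ and whose columns $a_j$ encode the monomials defining $\phi$.

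First, I would unravel independence in $M(W)$. Since $W$ is the row span of $A$, the projection $W \to \mathbb{Q}^I$ is the row span of the submatrix $A|_I$ with columns indexed by $I$. Hence $r_W(I)=\mathrm{rank}(A|_I)$, and $I$ is independent in $M(W)$ if and only if the columns $\{a_i:i\in I\}$ are $\mathbb{Q}$-linearly independent.

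Second, I would translate independence in $M(X)$. By definition, $I\subseteq E$ is independent in $M(X)$ exactly if $\pi_I: X\to K^I$ is dominant. Since $X$ is the Zariski closure of $\im\phi$, the closure of $\pi_I(X)$ equals the closure of $\phi_I((K^*)^d)$, where $\phi_I:(K^*)^d\to (K^*)^I$ is the monomial map $t\mapsto (t^{a_i})_{i\in I}$. So independence of $I$ in $M(X)$ is equivalent to $\phi_I$ having dense image in $K^I$, equivalently, to $\phi_I$ being surjective onto the open subset $(K^*)^I$.

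Third, I would invoke the standard fact that the image of a homomorphism of algebraic tori is a closed subtorus whose dimension equals the rank (over $\mathbb{Q}$) of the integer matrix defining it. Applied to $\phi_I$, this matrix is $A|_I$, so the image has dimension $\mathrm{rank}(A|_I)$, and fills $(K^*)^I$ if and only if this rank equals $|I|$ --- that is, if and only if $\{a_i:i\in I\}$ is $\mathbb{Q}$-linearly independent. Chaining the three equivalences yields $M(W)=M(X)$.

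The main technical point to get right is step three in positive characteristic: the differential of $\phi_I$ can fail to be surjective for reasons of inseparability, so one should avoid the differential and argue either via the dual map on character lattices (whose image has rank $\mathrm{rank}(A|_I)$, forcing the image of $\phi_I$ to be a subtorus of that dimension), or via the Smith normal form of $A|_I$, which reduces $\phi_I$ up to isomorphism of both source and target tori to a map of the form $(t_1,\ldots,t_d)\mapsto (t_1^{d_1},\ldots,t_r^{d_r},1,\ldots,1)$; each factor $t\mapsto t^{d_k}$ is surjective on $K^*$ because $K$ is algebraically closed, making the dimension of the image transparent. Either argument is independent of the characteristic.
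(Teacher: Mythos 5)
Your proof is correct and follows essentially the same approach as the paper's one-line argument, which cites the classical fact that monomials $t^{a_j}$ are algebraically dependent over $K$ precisely when their exponent vectors $a_j$ are $\Q$-linearly dependent. You unpack that fact explicitly via the character lattice / Smith normal form of $A|_I$, and your remark that one must avoid differentials in positive characteristic (inseparability of power maps) is a correct and worthwhile caution, though the paper does not dwell on it since the cited fact holds characteristic-free.
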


\begin{proof}
For $J \subseteq E$ the columns $a_j, j \in J$ are linearly dependent
over $\Q$ if and only if the monomials $t^{a_j}, j \in J$ in the variables
$t_1,\ldots,t_d$ are algebraically dependent over $K$.
\end{proof}

Let $\Z^E$ act on $\Q^E$ by $\alpha w:=(w_j/(p^{\alpha_j}))_{j \in E}$.

\begin{lemma} \label{lm:Actions}
For all $\alpha \in \Z^E$ we have $\alpha(Y(W))=Y(\alpha W)$, and similarly
for $X$.
\end{lemma}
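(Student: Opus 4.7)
The plan is to reduce the statement to the torus level — verifying $\alpha Y(W)=Y(\alpha W)$ — and then extend to the Zariski closure $X$ using the earlier lemma that $\alpha$ acts as a homeomorphism on $K^E$.

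First I would unwind the definition: $Y(W)$ is the image of the torus homomorphism $\phi(t)=(t^{a_j})_{j\in E}$, where the $a_j\in\Z^d$ are the columns of an integer matrix $A$ whose rows form a basis of $W$. Applying $\alpha$ coordinatewise yields
$\alpha\phi(t)=(t^{p^{-\alpha_j}a_j})_{j\in E}$, where the fractional exponents $p^{-\alpha_j}$ make sense because the Frobenius $F$ is a bijection on $K^{*}$.

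The crux of the argument is a change of variables that absorbs the Frobenius into the torus parameter. Choose an integer $N\geq\max_{j}\alpha_{j}$ and reparameterise by $t=F^{N}(s)$; this is a bijection on $(K^{*})^d$, and a routine computation gives $(F^{N}(s))^{p^{-\alpha_j}a_{j}}=s^{p^{N-\alpha_{j}}a_{j}}$, now with integer exponent vector $p^{N-\alpha_{j}}a_{j}\in\Z^d$. Thus $\alpha Y(W)=Y(\tilde{W})$, where $\tilde W$ is spanned by the rows of the integer matrix $\tilde A$ with columns $p^{N-\alpha_j}a_j$. A direct computation identifies the $i$-th row of $\tilde A$ with $p^N\cdot\alpha(\mathrm{row}_i A)$, which lies in $\alpha W$; since $Y(\cdot)$ depends only on the $\Q$-span of the rows, and scaling by the nonzero scalar $p^N$ preserves that span, one has $\tilde W=\alpha W$, so $\alpha Y(W)=Y(\alpha W)$.

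For the statement about $X$, I would invoke the earlier lemma that $\alpha$ is a homeomorphism of $K^E$ in the Zariski topology, so it commutes with closure: $X(\alpha W)=\overline{Y(\alpha W)}=\overline{\alpha Y(W)}=\alpha\overline{Y(W)}=\alpha X(W)$.

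The main obstacle is not conceptual but bookkeeping: verifying that the fractional-power exponents behave as expected under the substitution, that $F^{N}$ genuinely induces a bijection on the torus, and that the integer matrix $\tilde A$ really has the claimed rows. Once these are in place the identity at the torus level is a change of variables, and the closure statement is immediate from the homeomorphism property.
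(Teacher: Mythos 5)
Your proof is correct. The paper's proof reduces to the generators $\alpha=-\one$ and $\alpha=e_j$ of $\Z^E$: for $-\one$ both $W$ and $Y$ are invariant, and for $e_j$ one observes directly that multiplying the $j$-th column of the exponent matrix $A$ by $p$ corresponds exactly to raising the $j$-th coordinate of the torus to the $p$-th power. You instead handle an arbitrary $\alpha$ in a single step by reparametrising via $t=F^N(s)$, which turns the fractional exponents $p^{-\alpha_j}a_j$ into integers and exhibits $\alpha Y(W)$ as $Y(\tilde W)$ with $\tilde W$ the span of the column-rescaled matrix, then identifies $\tilde W$ with $\alpha W$ up to the overall factor $p^N$. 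The two proofs share the same core observation (Frobenius on $Y$ corresponds to diagonal scaling of the columns of $A$); the paper's generator-by-generator version keeps each computation trivial at the cost of a reduction step, while your version is more uniform but requires the $F^N$-substitution to avoid non-integer exponents. One small point you rely on without proof, as does the paper: that $Y(W)$ depends only on the $\Q$-span of the rows of $A$, which ultimately rests on isogenies of tori being surjective over the algebraically closed $K$. The closure step via the homeomorphism property is exactly as in the paper.
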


\begin{proof}
It suffices to prove this for $\alpha=-\one$ and for $\alpha=e_j$. For
$\alpha=-\one$ note that $\alpha W=W$ and also $\alpha Y=Y$.

Suppose that $\alpha=e_j$ and let $A' \in \Z^{d \times E}$ be a matrix
whose rows span $e_i W$. Then the rows of the matrix $A$ obtained from
$A'$ by multiplying the $j$-th column with $p$ span $W$, and
by inspection
$Y(W)$ is obtained from $Y(e_j W)$ by raising the $j$-th coordinate to
the power $p$.
\end{proof}

\begin{lemma} \label{lm:OneGeneral}
The all-one vector $\one \in X$ satisfies condition (*) from
Theorem~\ref{thm:AlgFlock} for each $\alpha \in \Z^E$.
\end{lemma}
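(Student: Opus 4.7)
The plan is to exploit the fact that the Frobenius fixes $1$, together with the toric structure of $\alpha X$ provided by Lemma \ref{lm:Actions}.

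First I would observe that $\alpha\one = (F^{-\alpha_j}(1))_{j \in E} = \one$ for every $\alpha \in \Z^E$, so condition (*) for $v = \one$ reduces to $M(T_\one \alpha X) = M(T_\xi \alpha X)$, where $\xi$ is the generic point of $\alpha X$. By Lemma \ref{lm:Actions}, $\alpha X = X(\alpha W) = \overline{Y(\alpha W)}$; set $Y' := Y(\alpha W)$. This $Y'$ is a $d$-dimensional subtorus of $(K^*)^E$ containing $\one$. Since a homomorphic image of tori is closed in the target torus, $Y'$ is closed in $(K^*)^E$, and hence $\alpha X \cap (K^*)^E = Y'$; in particular $Y'$ is open, dense, and smooth inside the irreducible $\alpha X$, and $\one$ is a smooth point of $\alpha X$.

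The core step is a translation argument. For each $y \in Y'$, componentwise multiplication $m_y : K^E \to K^E$, $x \mapsto (y_j x_j)_{j \in E}$, is a linear automorphism of $K^E$; since $Y'$ is a subgroup of $(K^*)^E$, $m_y$ preserves $Y'$, and by passing to Zariski closures it preserves $\alpha X$. As $m_y$ is linear, its own derivative at $\one$ is $m_y$, so $T_y \alpha X = y \cdot T_\one \alpha X$ (componentwise product). Because scaling the coordinates of a linear subspace by entries in $K^*$ does not change its matroid (each projection $\pi_I$ is rescaled by an invertible diagonal), we obtain $M(T_y \alpha X) = M(T_\one \alpha X)$ for every $y \in Y'$.

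To finish I would invoke the standard semicontinuity fact that the locus $U := \{v \in \alpha X : M(T_v \alpha X) = M(T_\xi \alpha X)\}$ is open and dense in $\alpha X$: for each basis $B$ of the generic matroid, non-vanishing of the corresponding maximal minor of the Jacobian is an open condition that holds generically, and only finitely many such conditions are imposed. Since $Y'$ is also open dense in the irreducible variety $\alpha X$, the two intersect, and for any $y \in Y' \cap U$ we combine $M(T_\one \alpha X) = M(T_y \alpha X)$ with $M(T_y \alpha X) = M(T_\xi \alpha X) = M(X,\alpha)$ to conclude (*). The main obstacle I anticipate is purely bookkeeping---verifying that $Y' = \alpha X \cap (K^*)^E$ is open in $\alpha X$ and that $m_y$ genuinely preserves $\alpha X$ as a variety---but both follow immediately from $Y'$ being a closed subgroup of $(K^*)^E$, so the substance of the proof is the translation trick plus the matroid-invariance under coordinate scaling.
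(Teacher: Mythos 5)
Your proposal is correct and takes essentially the same approach as the paper: the heart of both arguments is the translation-invariance of the tangent matroid along the dense subtorus (via $T_y \alpha X = y \cdot T_\one \alpha X$ and the matroid's invariance under coordinate rescaling), combined with the fact that the locus of points achieving the generic tangent matroid is open dense, so it must meet the torus. The paper's proof is terser and leaves the final density step implicit, but the ideas and their order are the same.
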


\begin{proof}
The torus $Y$ is smooth, and its tangent space $T_{\phi(t)} Y=T_{\phi(t)}
X$ at the point $\phi(t)$ equals $\phi(t) \cdot T_\one X$, where $\cdot$
is the componentwise multiplication and $\one$ is the
unit element in the torus $(K^*)^E$. This implies that $M(T_{\phi(t)}
X)=M(T_\one X)$.

For each $\alpha \in \Z^E$ we have $\alpha X=\overline{\alpha Y}$, where
$\alpha Y$ is also a $d$-dimensional torus with unit element $\one=\alpha
\one$. Hence the previous paragraph applies.
\end{proof}

\begin{lemma} \label{lm:TangentTorus}
Let $A \in \Z^{d \times E}$ such that its rows generate the lattice
$W \cap \Z^E$. Then $T_\one X$ is the span in $K^E$ of the rows of $A$.
\end{lemma}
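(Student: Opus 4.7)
The plan is to compute the differential of $\phi$ at $t=\one$ and to identify its image with $T_\one X$, using that $\one$ is a smooth point of $X$.

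First, a direct computation yields $\partial(t^{a_j})/\partial t_i\bigr|_{t=\one}=a_{ij}$ (the image of $a_{ij}\in \Z$ in $K$). Hence $d_\one \phi: K^d\to K^E$ is the linear map $s\mapsto sA$, whose image $R\subseteq K^E$ is the $K$-linear row span of $A$. Since $\phi$ is a homomorphism of algebraic groups onto its image $Y$ (with $Y$ automatically closed in $(K^*)^E$), and $Y$ is a $d$-dimensional torus, hence smooth, that is open in $X$ and contains $\one$, we have $T_\one X = T_\one Y \supseteq R$, with equality whenever $\dim_K R=d$.

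The substance of the argument is thus to show that the rows of $A$ are $K$-linearly independent, and this is where the lattice hypothesis on $A$ enters. Suppose some nontrivial $K$-linear combination of the rows of $A$ vanishes. Since the entries of $A$ lie in the prime field $\mathbb{F}_p\subseteq K$, we may take the coefficients in $\mathbb{F}_p$, and lift to an integer vector $c\in \Z^d$ not all of whose entries are divisible by $p$, with $cA\equiv 0\pmod p$ in $\Z^E$. Then $w:=cA/p$ is an integer vector lying in the $\Q$-span of the rows of $A$, that is, $w\in W\cap \Z^E$. By hypothesis $w=\sum_i d_i r_i$ for some $d_i\in \Z$, where $r_1,\ldots,r_d$ are the rows of $A$. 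Comparing the two expressions $pw=cA=\sum_i c_ir_i$ and $pw=\sum_i pd_ir_i$ and using the $\Z$-linear independence of the $r_i$ forces $c_i=pd_i$ for all $i$, contradicting the choice of $c$.

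The main obstacle is this last paragraph: one has to translate the failure of $K$-linear independence in characteristic $p$ into the existence of an element of $W\cap \Z^E$ that escapes the lattice spanned by the rows of $A$. It is essential here that the rows of $A$ generate the entire lattice $W\cap \Z^E$, rather than merely span $W$ over $\Q$; the same torus $Y$ could be parametrised by any $A'$ whose rows form a $\Q$-basis of $W$, but only for an integral lattice basis does $d_\one\phi$ remain injective in characteristic $p$, yielding the dimension count that closes the argument.
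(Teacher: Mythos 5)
Your proof is correct, and it takes a genuinely different route to the key fact — that the image of $A$ in $K^{d\times E}$ has full rank $d$. The paper argues abstractly: since $\Z^E/(W\cap\Z^E)$ is torsion-free, $A$ extends to an integral $|E|\times|E|$-matrix of determinant $\pm1$; by Laplace expansion, $\pm 1$ is an integral combination of the $d\times d$-minors of $A$, so at least one such minor is a unit mod $p$, giving full rank. You instead argue by contradiction directly from the lattice hypothesis: a nontrivial $\mathbb{F}_p$-relation among the rows lifts to $c\in\Z^d$, not all entries divisible by $p$, with $cA\equiv 0\pmod p$; then $w:=cA/p\in W\cap\Z^E$, and expressing $w$ integrally in the rows of $A$ and comparing with $pw=cA$ forces $p\mid c$, a contradiction. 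Both proofs ultimately rest on the saturation of $W\cap\Z^E$ in $\Z^E$; the paper invokes it through the extendability to a unimodular matrix (a structure-theorem fact), while yours uses it more directly and elementarily via the step $cA/p\in\Z^E\Rightarrow cA/p\in W\cap\Z^E$. The surrounding framing (computing $d_\one\phi$, $Y$ open and smooth in $X$, so $T_\one X=T_\one Y$ and equality holds once the dimension matches) agrees with the paper's. One small point worth flagging: you justify taking the dependency coefficients in $\mathbb{F}_p$ by the standard fact that row rank of a matrix over a subfield does not change under field extension; it would be cleaner to state this explicitly rather than leave it as ``we may take.''
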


\begin{proof}
As $\Z^E/(W \cap \Z^E)$ is torsion-free, $A$ can be extended
to an integral $|E| \times E$-matrix with determinant $1$. This
latter determinant is an integral linear combination of the $d \times
d$-subdeterminants of $A$, so at least one $d \times d$-submatrix $A_J$ of
$A$ has $p \not | \det(A_J)$. Hence the image of $A$ in $K^{d \times E}$
has rank $d$.  This image, acting on row vectors, is also the matrix of
the derivative $d_\one \phi$.  We conclude that $T_\one X=\im d_\one \phi$
and that this space is spanned by the images in $K^E$ of the rows of $A$.
\end{proof}

Let $\val_p:\mathbb{Q}\rightarrow \Z$ denote the $p$-adic valuation.

\begin{lemma} \label{lm:BasesTorus}
Let $A' \in \Q^{d \times E}$ be any matrix whose rows span $W$. Then
for $B \in \binom{E}{d}$ we have $B \in M(T_\one X)$ if and only if
$\val_p(A'_B) \leq \val_p(A'_{B'})$ for all $B' \in \binom{E}{d}$.
\end{lemma}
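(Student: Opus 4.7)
The plan is to reduce to the case of an integer matrix whose rows generate the lattice $W \cap \Z^E$, for which Lemma~\ref{lm:TangentTorus} already tells us exactly which $B$ index a basis of $T_\one X$.

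First I observe that the statement to prove is invariant under left multiplication of $A'$ by any $G \in GL_d(\Q)$: the Cauchy--Binet type identity $\det(A'_B) = \det(G)\det(A''_B)$ (where $A''=G^{-1}A'$) shifts every value $\val_p(\det(A'_B))$ by the fixed constant $\val_p(\det(G))$, so the collection of minimizers $B$ on either side of the claimed equivalence is unchanged. Hence I may replace $A'$ by any matrix $A \in \Z^{d \times E}$ whose rows generate the $\Z$-lattice $W \cap \Z^E$; such an $A$ exists because $W \cap \Z^E$ is a free $\Z$-module of rank $d$, and any two such matrices differ by a left multiplication in $GL_d(\Q)$ (in fact in $GL_d(\Z)$).

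Next I invoke Lemma~\ref{lm:TangentTorus}: the tangent space $T_\one X$ is the row span in $K^E$ of the reduction $\bar A$ of $A$ modulo $p$. Consequently $B \in \binom{E}{d}$ is a basis of $M(T_\one X)$ if and only if $\bar A_B$ is invertible over $K$, i.e.\ if and only if $p \nmid \det(A_B)$, i.e.\ if and only if $\val_p(\det(A_B)) = 0$. Since $A$ has integer entries, every $\det(A_B)$ is an integer, so $\val_p(\det(A_B)) \geq 0$ for all $B$. Moreover, the argument given inside the proof of Lemma~\ref{lm:TangentTorus} (using that $\Z^E/(W \cap \Z^E)$ is torsion-free, so $A$ extends to a unimodular $|E|\times E$ matrix and its Laplace expansion along the added rows shows some $d\times d$ minor of $A$ is coprime to $p$) gives at least one $B$ with $\val_p(\det(A_B))=0$.

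Combining these two facts, the minimum of $\val_p(\det(A_B))$ over $B \in \binom{E}{d}$ equals $0$, and this minimum is attained precisely on the bases of $M(T_\one X)$. Translating back to the original $A'$ via the constant shift $\val_p(\det(G))$, we conclude that $B$ is a basis of $M(T_\one X)$ if and only if $\val_p(\det(A'_B))$ is minimal over all $B' \in \binom{E}{d}$, as required. There is no real obstacle; the only point worth care is the invariance under the base change $G$, which ensures that the characterization does not depend on the particular matrix $A'$ chosen to span $W$.
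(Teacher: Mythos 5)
Your proof is correct and follows essentially the same route as the paper: observe invariance under left multiplication by $GL_d(\Q)$ (which shifts all $\val_p$ of $d\times d$ minors by a constant), reduce to the lattice-generating integer matrix $A$ of Lemma~\ref{lm:TangentTorus}, and identify the bases of $M(T_\one X)$ as exactly the $B$ with $p \nmid \det(A_B)$. The paper's proof is terser but makes the same two moves; you have merely filled in why $\val_p(\det(A_B))\geq 0$ with equality attained, which is a helpful clarification.
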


\begin{proof}
For $A'=A$ as in the previous lemma this is evident by the previous lemma:
these are the subdeterminants that are not divisible by $p$. Any other
$A'$ is related to such an $A$ via $A'=gA$, where $g \in \Q^{d \times
d}$ is an invertible linear transformation, and then $\det(A'_B)=\det(g)
\det(A_B)$ is minimal if and only if $\det(A_B)$ is.
\end{proof}

\begin{theorem}
Let $A \in \Z^{d \times E}$ as in Lemma~\ref{lm:TangentTorus}.
Then the Lindstr\"om valuation associated to the flock of $X$ maps $B
\in \binom{E}{d}$ to $\val_p(\det A_B)$.
\end{theorem}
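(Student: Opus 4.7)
The plan is to compute $M(X,\alpha)$ for each $\alpha \in \Z^E$ using the toric description of $X$, identify it with $M^\mu_\alpha$ for the candidate valuation $\mu(B) := \val_p(\det A_B)$, and then argue that $\mu$ coincides with the Lindstr\"om valuation $\nu^X = \nu^M$ by checking the correct normalisation.

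First I would use Lemma~\ref{lm:Actions} to write $\alpha X = X(\alpha W)$, and then apply Lemma~\ref{lm:OneGeneral} to this toric variety to conclude that $\one \in \alpha X$ is a very general point, so that $M(X,\alpha) = M(T_\one \alpha X)$. A $\Q$-spanning matrix for $\alpha W$ is $A^{(\alpha)}$ with entries $A_{i,j}/p^{\alpha_j}$, giving $\det A^{(\alpha)}_B = \det A_B / \prod_{j\in B} p^{\alpha_j}$. Lemma~\ref{lm:BasesTorus} applied to $\alpha W$ then yields that $B$ is a basis of $M(T_\one \alpha X)$ if and only if
$$\sum_{j\in B}\alpha_j - \mu(B) \geq \sum_{j\in B'}\alpha_j - \mu(B')$$
for every $B' \in \binom{E}{d}$. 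This is precisely the defining inequality of $M^\mu_\alpha$, so $M(X,\alpha) = M^\mu_\alpha$ for every $\alpha$. Here $\mu$ is a matroid valuation, since it is the classical $p$-adic valuation associated with the rational matrix $A$.

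To identify $\mu$ with $\nu^X$, I would use that $\nu^M$ is defined as the Legendre--Fenchel dual of $g^M$ evaluated on basis indicators. By Theorem~\ref{thm:MLdual} and the M-convexity of the function $f_\mu$ defined by $f_\mu(e_B) = \mu(B)$ on $\BB^\mu$ and $+\infty$ elsewhere, one has $\mu(B) = f_\mu^{\bullet\bullet}(e_B) = \tilde g^{\bullet}(e_B)$ for $\tilde g := f_\mu^\bullet$. Hence it suffices to prove that the L-convex function $\tilde g(\alpha) = \sup_B (e_B^T\alpha - \mu(B))$ coincides with $g^M$. By the uniqueness clause in Lemma~\ref{lem:mf_g}, this reduces to checking $\tilde g(0) = 0$ and $\tilde g(\alpha + e_I) - \tilde g(\alpha) = r_\alpha(I)$ for all $\alpha,I$. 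The first equality amounts to $\min_B \mu(B) = 0$, which is precisely the unimodularity observation in the proof of Lemma~\ref{lm:TangentTorus}: some $d\times d$ minor of $A$ has determinant coprime to $p$, while all such minors are integers.

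For the discrete-derivative identity I would proceed by induction on $|I|$. The base case $|I|=1$ follows from $M^\mu_\alpha = M_\alpha$: the ``$\geq$'' direction uses a basis of $M_\alpha$ containing $i$ (when $i$ is not a loop), while the ``$\leq$'' direction uses that $\mu$ is integer-valued, so any $B' \ni i$ that fails to be a basis of $M_\alpha$ lags behind the maximum of $e_B^T \alpha - \mu(B)$ by at least $1$, compensating for the $+1$ gained from $e_i$. The inductive step writes $\tilde g(\alpha + e_I) - \tilde g(\alpha)$ as a telescoping sum and recombines the one-step increments into $r_\alpha(I)$ using Lemma~\ref{lem:triangle}. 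The main subtlety in this plan is precisely this last normalisation step: a matroid flock determines its underlying valuation only up to an additive constant, so the proof must leverage both the unimodularity of $A$ and the integrality of $\val_p$ to pin $\mu$ down to $\val_p(\det A_B)$ on the nose.
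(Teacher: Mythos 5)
Your proof is correct, and its main engine is the same as the paper's: use Lemma~\ref{lm:Actions} and Lemma~\ref{lm:OneGeneral} to reduce $M(X,\alpha)$ to $M(T_\one\,\alpha X)$, then apply Lemma~\ref{lm:BasesTorus} to the spanning matrix $A\,\mathrm{diag}(p^{-\alpha})$ of $\alpha W$ to conclude $M(X,\alpha)=M^\mu_\alpha$ for the candidate $\mu(B)=\val_p(\det A_B)$. Where you genuinely add something is the final identification $\mu=\nu^X$: the paper simply invokes Theorem~\ref{thm:mf_char}, tacitly using that a valuation producing a given matroid flock is determined only up to an additive constant, and that both $\nu^X$ and $\mu$ are already normalised to have minimum zero (the former because the defining condition $g^M(0)=0$ forces $\min_B\nu^M(B)=0$, the latter by the unimodularity observation inside Lemma~\ref{lm:TangentTorus}). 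You make this normalisation explicit by verifying that $\tilde g:=f_\mu^\bullet$ satisfies the two conditions of Lemma~\ref{lem:mf_g} characterising $g^M$, and then passing through Fenchel biduality (Theorem~\ref{thm:MLdual}). Your inductive check of the discrete-derivative identity, using Lemma~\ref{lem:triangle} together with the integrality of $\val_p$, is sound; it is a more self-contained way to close the argument than the paper's terse final sentence, and correctly pins down the constant that the statement of the theorem requires.
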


\begin{proof}
Let $\alpha \in \Z^E$ and set $A':=A \  \mathrm{diag}(p^{-\alpha})$. The rows
of this matrix span $\alpha W$, and by Lemma~\ref{lm:Actions} as well as
Lemma~\ref{lm:BasesTorus} applied to $\alpha X$ a $B \in \binom{E}{d}$
is a basis in $M(T_\one \alpha X)$ if and only if $\val_p (A'_B) \leq
\val_p(A'_{B'})$ for all $B' \in \binom{E}{d}$. This translates into
the inequality $\val_p(A_B)-e^T_B \alpha \leq
\val_p(A_{B'})-e^T_B \alpha$. Now the result follows from
Lemma~\ref{lm:OneGeneral} and the characterisation in
Theorem~\ref{thm:mf_char} of the Lindstr\"om valuation.
\end{proof}

\subsection{Rigid matroids}
 A matroid valuation $\nu:\binom{E}{d}\rightarrow\Rinf$ is  {\em trivial} if there exists an $\alpha\in \R^E$ so that $\nu(B)=e_B^T\alpha$ for all $B\in \BB^\nu$. 
  \begin{lemma} \label{lem:triv} Suppose $\nu:\binom{E}{d}\rightarrow\Zinf$ is a matroid valuation. Then there is an $\alpha\in \Z^E$ such that $M^\nu_\alpha=M^\nu$ if and only if $\nu$ is trivial.
 \end{lemma}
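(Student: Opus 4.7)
The plan is to handle the two directions separately.

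The forward implication is straightforward. If $\alpha \in \Z^E$ satisfies $M^\nu_\alpha = M^\nu$, then $e_B^T \alpha - \nu(B) = g^\nu(\alpha)$ is the same integer $c$ for every $B \in \BB^\nu$, while for $B' \notin \BB^\nu$ the value $\nu(B')=\infty$ automatically excludes $B'$ from being a basis of $M^\nu_\alpha$. Setting $\alpha_0 := \alpha - (c/d)\,\one \in \R^E$ then yields $\nu(B)=e_B^T\alpha_0$ for every $B \in \BB^\nu$, witnessing triviality.

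For the converse, suppose $\alpha_0 \in \R^E$ satisfies $\nu(B) = e_B^T\alpha_0$ for every $B \in \BB^\nu$. The goal is to find $\alpha \in \Z^E$ for which $e_B^T\alpha - \nu(B)$ is the same integer for all $B \in \BB^\nu$; then $g^\nu(\alpha)$ equals that integer, since the $B' \notin \BB^\nu$ contribute $-\infty$, giving $M^\nu_\alpha = M^\nu$. The key arithmetic observation is that whenever $B' = B - i + j$ is a single-exchange pair in $\BB^\nu$, one has $\alpha_{0,j} - \alpha_{0,i} = \nu(B') - \nu(B) \in \Z$. I plan to upgrade this to the claim that for any two elements $i,j$ lying in the same connected component $C_\ell$ of the support matroid $M^\nu$, the difference $\alpha_{0,i} - \alpha_{0,j}$ is an integer, so that all coordinates of $\alpha_0$ restricted to $C_\ell$ share a common fractional part $f_\ell$.

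The bridge from single-exchange to matroid connectivity is the main obstacle, and it is supplied by the standard fact (see \cite{OxleyBook}) that in a connected matroid any two elements $i,j$ lie in a common circuit $K$. Extending $K \setminus \{j\}$ to a basis $B$ of $M^\nu$ makes $K$ the fundamental circuit of $j$ relative to $B$; since $i \in K$, the set $B - i + j$ is again a basis of $M^\nu$, furnishing the required single-exchange pair. Granted the common fractional parts $f_\ell$, I define $\alpha \in \Z^E$ by setting $\alpha_i := \alpha_{0,i} - f_\ell$ for $i \in C_\ell$ and $\alpha_i := 0$ for loops of $M^\nu$. Because every $B \in \BB^\nu$ meets each $C_\ell$ in a subset of fixed size $d_\ell$ (independent of $B$), summing yields $e_B^T\alpha - \nu(B) = -\sum_\ell d_\ell f_\ell$, a single constant over $\BB^\nu$, which gives $M^\nu_\alpha = M^\nu$ as required.
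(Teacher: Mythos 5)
Your forward direction matches the paper's argument exactly. For the converse, you take a genuinely different and more elementary route: the paper invokes Lemma~\ref{lem:poly}, observing that $C^\nu_\beta$ is a nonempty polyhedron cut out by a totally unimodular system of the form $\alpha_i-\alpha_j\ge c_{ij}$ with integer right-hand sides, and hence contains a lattice point. You instead build the integer $\alpha$ directly: starting from a real witness $\alpha_0$ of triviality, you show via fundamental-circuit exchanges that $\alpha_{0,i}-\alpha_{0,j}\in\Z$ whenever $i,j$ lie in a common connected component of $M^\nu$, then subtract the common fractional part on each component. The key facts you rely on --- that in a connected matroid any two elements share a circuit, that $B-i+j$ is a basis when $i$ lies in the fundamental circuit of $j$ relative to $B$, and that every basis meets each component $C_\ell$ in a fixed number $d_\ell$ of elements (the direct-sum decomposition) --- are standard and correctly applied; the final computation $e_B^T\alpha-\nu(B)=-\sum_\ell d_\ell f_\ell$ is a constant over $\BB^\nu$, and bases outside $\BB^\nu$ are excluded automatically by $\nu=\infty$. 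Your proof is correct. What you gain is self-containedness (no appeal to total unimodularity or the alcoved-polytope description of the cells); what the paper's route gains is that it exposes the integral polyhedral structure of $C^\nu_\beta$, which is reused elsewhere (e.g.\ in Lemma~\ref{lem:walk}). One small presentational nit: the fractional parts $f_\ell$ are only defined on non-loop components, but since $d_\ell=0$ for loop components and you set $\alpha_i=0$ there, the formula is unaffected --- worth a clarifying word.
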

 \proof Necessity: if $M^\nu_\alpha=M^\nu$ for some
 $\alpha\in \Z^E$, then $e^T_B\alpha-\nu(B)=g^\nu(\alpha)$
 for all bases $B$ of $M^\nu$.
 Equivalently, $\nu(B)=e_B^T\alpha - g^\nu(\alpha)=e_B^T\alpha'$ for all bases $B$ of $M^\nu$, where $\alpha'=\alpha-(g^\nu(\alpha)/d) \one$. Then $\nu$ is trivial.

 Sufficiency: If $\nu$ is trivial, then there exists a $\beta\in \R^E$  so that $\nu(B)=e_B^T\beta$ for all bases $B$ of $M^\nu$, or equivalently,  $M^\nu_\beta=M^\nu$. By Lemma \ref{lem:poly}, the set $C^\nu_\beta:=\{\alpha\in\R^E: \BB^\nu_\alpha\supseteq \BB^\nu_{\beta}\}$ is determined by a totally unimodular system of inequalities. As $C^\nu_\beta$ is nonempty, it must also contain an integer vector $\alpha\in\Z^E$. \endproof

 Following Dress and Wenzel, we call a matroid $M$ {\em rigid} if all valuations of $M$ are  trivial.

\begin{theorem} \label{thm:alg_lin}Let $N$ be a matroid, and let $K$ be an algebraically closed field of positive characteristic. If $N$ is rigid, then  $N$ is algebraic  over  $K$ if and only if $N$ is linear over $K$.
\end{theorem}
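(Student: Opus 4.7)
The ``if'' direction is standard and holds over any field: given a linear representation $W\subseteq K^E$, viewing $W$ as an irreducible closed subvariety of $K^E$ shows that $I\subseteq E$ is independent in $M(W)$ if and only if the projection $W\to K^I$ is surjective, if and only if it is dominant, so $W$ is also an algebraic (equivalently algebro-geometric) representation of $N$.

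For the nontrivial direction, assume $N$ is rigid and that there is an irreducible closed subvariety $X\subseteq K^E$ with $M(X)=N$. The plan is to produce a particular $\alpha\in\Z^E$ for which $M(X,\alpha)=N$, and then extract a linear representation as a tangent space $T_{\alpha v}\alpha X$ using the work of Section~\ref{sec:algebraic}. First, by Corollary~\ref{cor:MatFlock}, the assignment $M\colon \alpha\mapsto M(X,\alpha)$ is a matroid flock with support matroid $N$. Theorem~\ref{thm:mf_char} (and the explicit construction in Lemma~\ref{lem:nuM}) then associates to this flock its Lindstr\"om valuation $\nu:=\nu^X$, a matroid valuation with support matroid $M^\nu=N$ and with $M_\alpha=M^\nu_\alpha$ for every $\alpha\in\Z^E$.

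Since $N$ is rigid, $\nu$ is trivial. By Lemma~\ref{lem:triv}, triviality of $\nu$ gives an \emph{integer} vector $\alpha\in\Z^E$ for which $M^\nu_\alpha=M^\nu=N$, that is, $M(X,\alpha)=N$. Now apply Theorem~\ref{cor:AlgFlock}: for a general point $v\in X$, the linear subspace $V_\alpha:=T_{\alpha v}\alpha X\subseteq K^E$ is the $\alpha$-component of a Frobenius flock and satisfies $M(V_\alpha)=M(X,\alpha)=N$. Thus $V_\alpha$ is a linear representation of $N$ over $K$, completing the proof.

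The real content of this argument lies entirely in the machinery that has already been built. The only step that requires any vigilance is the passage from the existence of some $\beta\in\R^E$ realizing a trivial valuation to an \emph{integer} $\alpha\in\Z^E$ realizing $M^\nu_\alpha=N$; this is exactly the point where Lemma~\ref{lem:triv} invokes the fact that the cell $C^\nu_\beta$ is cut out by a totally unimodular system of inequalities with integer data, and therefore contains a lattice point whenever it is nonempty. Once such an integer $\alpha$ is in hand, Theorem~\ref{cor:AlgFlock} is directly applicable and delivers the linear representation as a tangent space.
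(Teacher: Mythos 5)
Your proof is correct and follows the same route as the paper: construct the matroid flock from $X$, extract the Lindstr\"om valuation $\nu^X$, use rigidity to conclude $\nu^X$ is trivial, apply Lemma~\ref{lem:triv} to obtain an integer $\alpha$ with $M(X,\alpha)=N$, and take the tangent space at a general point of $\alpha X$ as the linear representation. The only cosmetic difference is that you invoke Theorem~\ref{cor:AlgFlock} for the final tangent-space step, whereas the paper uses the weaker observation that for a single fixed $\alpha$ a general $x\in\alpha X$ already satisfies $M(T_x\alpha X)=M(X,\alpha)$; both are valid and amount to the same thing here.
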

\proof If $N$ is linear over $K$, then clearly $N$ is also algebraic over $K$. We prove the converse. Let $X\subseteq K^E$ be an algebraic representation of $N$, and let $\nu=\nu^X$ be the Lindstr\"om valuation of $X$. Since we assumed that $N$ is rigid, and $\nu$ is a valuation with support matroid $M(X)=N$, it follows that $\nu$ is  trivial. By Lemma \ref{lem:triv}, there exists an $\alpha\in\Z^E$ such that $M(X,\alpha)=M^\nu_\alpha=M^\nu=N$ for some $\alpha\in \Z^E$. For a sufficiently general $x\in \alpha X$, we have $M(T_x \alpha X)=M(X,\alpha)$, and then $T_x \alpha X$ is a linear representation of $N$ over $K$.\endproof
We will give a brief account of the work of Dress and Wenzel on matroid rigidity. The following lemma, which is given in much greater generality in \cite{DressWenzel1992}, points out how the structure of a matroid $M$ restricts the set of valuations of $M$.
\begin{lemma}[Dress and Wenzel] \label{lem:tutte_val}Let $\nu$ be a matroid valuation with support matroid $M=(E,\BB)$. Let $F\subseteq E$ and  let $a,b,c,d\in E\setminus F$ be distinct. If $F+a+c, ~F+b+d, ~F+a+d, ~F+b+c\in \BB$, but $F+a+b\not\in \BB$, then
\begin{equation}\label{eq:tutte_val}\nu(F+a+c)+\nu(F+b+d)=\nu(F+a+d)+\nu(F+b+c).\end{equation}
\end{lemma}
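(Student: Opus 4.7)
The plan is to derive the equality as a two-sided inequality, using the valuation exchange axiom (V2) twice. The key observation is that when (V2) is applied, the ``existential'' $j$ can sometimes be forced: if one of the candidate choices would put $F+a+b$ on the right-hand side, then since $\nu(F+a+b)=\infty$, that choice yields an infinite right-hand side, which is incompatible with the finite left-hand side. So the other choice must realize the inequality.

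Concretely, first I would apply (V2) to the pair $B := F+a+c$ and $B' := F+b+d$, both of which lie in $\BB$ by hypothesis, with the index $i := a \in B\setminus B'$. The axiom guarantees some $j \in B'\setminus B = \{b,d\}$ such that
\[
\nu(B)+\nu(B') \geq \nu(B-a+j)+\nu(B'+a-j).
\]
The two candidates are $j=b$, giving the pair $(F+b+c,\,F+a+d)$, and $j=d$, giving the pair $(F+c+d,\,F+a+b)$. Since $\nu(B)+\nu(B')$ is finite but the $j=d$ choice forces the right-hand side to include $\nu(F+a+b)=\infty$, only $j=b$ is admissible. This yields
\[
\nu(F+a+c)+\nu(F+b+d)\;\geq\;\nu(F+b+c)+\nu(F+a+d).
\]

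Next I would apply (V2) symmetrically to $B := F+a+d$ and $B' := F+b+c$ with $i := a$. Now $j\in\{b,c\}$; the choice $j=c$ produces the pair $(F+c+d,\,F+a+b)$ and again would force $\nu(F+a+b)=\infty$ on the right. Hence $j=b$ must work, giving the reverse inequality
\[
\nu(F+a+d)+\nu(F+b+c)\;\geq\;\nu(F+b+d)+\nu(F+a+c).
\]
Combining the two inequalities yields the claimed equality \eqref{eq:tutte_val}.

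The main obstacle, such as it is, is purely bookkeeping: one must choose the right index $i$ (here $i=a$) in each application of (V2) so that the forbidden basis $F+a+b$ appears among the outputs of the undesired branch; any other choice of $i$ would not force the branch. Once this is set up, finiteness of $\nu$ on the four hypothesized bases eliminates all alternatives and the three-term Plücker-type identity drops out immediately.
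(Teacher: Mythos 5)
Your proof is correct and takes essentially the same approach as the paper: apply (V2) twice with $i=a$, observe that the undesired exchange $j$ would produce $F+a+b$ with $\nu=\infty$ and hence is excluded, and combine the two resulting inequalities. (Your write-up in fact states the (V2) inequalities with the correct orientation $\geq$; the paper's displays carry a harmless sign typo.)
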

\proof Apply (V2) to bases $B=F+a+c, ~B'=F+b+d$ and the element $i=a\in B\setminus B'$. Since $F+a+b\not \in \BB$, the only feasible exchange element is $j=b$. Hence
$$\nu(F+a+c)+\nu(F+b+d)=\nu(B)+\nu(B')\leq \nu(B-i+j)+\nu(B'+i-j)=\nu(F+b+c)+\nu(F+a+d).$$
We similarly obtain the complementary inequality $$\nu(F+b+c)+\nu(F+a+d) \leq \nu(F+a+c)+\nu(F+b+d)$$ by considering $B=F+b+c,~ B'=F+a+d$, and $i=b$.\endproof
So from any given matroid $M=(E,\BB)$ we obtain a list of linear equations \eqref{eq:tutte_val} which together confine the valuations of $M$ to a subspace of $\R^{\BB}$ (ignoring the $B\not \in \BB$, which are fixed to $\nu(B)=\infty$). If this subspace coincides with the set of  trivial valuations of $M$, then $M$ is rigid. A straightforward calculation reveals that this happens, for example, if $M$ is the Fano matroid.
\ignore{
Dress and Wenzel formalized this argument in terms of the {\em Tutte group} of a matroid:
\begin{definition} For any matroid $M$ on $E$ with bases $\BB$, let $F_M$ be the free abelian group generated by a symbol $[B]$ for each $B\in \BB$ and a symbol $\epsilon$, and let $K_M$ be the subgroup of $F_M$ generated by
\begin{itemize}
\item $\epsilon^2$
\item $\epsilon[Fac][Fbd][Fad]^{-1}[Fbc]^{-1}$ whenever $Fac, Fbd, Fad, Fbc\in \BB$, but $Fab\not\in \BB$
\end{itemize}
Then the {\em Tutte group of $M$} is the factor group $\mathbb{T}_M:=F_M/K_M$. \end{definition}

Let $c:F_M\rightarrow (\Z^E, +)$ be the group homomorphism which extends $c:[B]\mapsto e_B$, $c:\epsilon\mapsto 0$.
Then $K_M$ is in the kernel of $c$,  hence $c$ induces a homomorphism $c:\mathbb{T}_M\rightarrow (\Z^E, +)$.  The {\em inner Tutte group} of $M$ is the subgroup of `homogeneous' elements
$\mathbb{T}_M^{(0)}:=\ker c= \{t \in \mathbb{T}_M: c(t)=0\}.$

\ignore{The following technical lemma is straightforward from Lemma \ref{lem:tutte_val}. We omit the proof.
\begin{lemma} \label{lem:tut_val}Let $M$ be a matroid with bases $\BB$.
\begin{enumerate}
\item if $\nu$ is a valuation of $M$, then there is a unique group homomorphism $h^{\nu}:\mathbb{T}_M\rightarrow (\R, +)$ so that $h^{\nu}:[B]\mapsto\nu(B)$ for all $B\in \BB$ and $h^{\nu}: \epsilon\mapsto 1$; and
\item if $\nu, \nu'$ are valuations of $M$, then $\nu\sim\nu'$  if and only if $h^{\nu}(t)=h^{\nu'}(t)$ for all $t\in \mathbb{T}_M^{(0)}$.
\end{enumerate}
\end{lemma}
Thus the following sufficient condition for matroid rigidity is obtained. }

The following theorem from \cite{DressWenzel1992} then follows essentially from Lemma \ref{lem:tutte_val}.
\begin{theorem}[Dress and Wenzel] \label{thm:tutte_rigid} Let $M$ be a matroid. If each element of $\mathbb{T}_M^{(0)}$ has finite order, then $M$ is rigid.
\end{theorem}
\ignore{\proof If $\mathbb{T}_M^{(0)}$ is torsion, then for any homomorphism $h:\mathbb{T}_M\rightarrow (\R, +)$, we have $h(t)=0$ for all $t\in  \mathbb{T}_M^{(0)}$, as $0$ is the only element of $(\R, +)$ of finite order. In particular, if $\nu$ is any valuation of $M$, and   $\nu'$ is the trivial valuation, then $h^{\nu}(t)=0=h^{\nu'}(t)$ for all $t\in \mathbb{T}_M^{(0)}$, where $h^\nu$ is the homomorphism from Lemma \ref{lem:tut_val} (1).
By Lemma  \ref{lem:tut_val} (2), we have $\nu\sim \nu'$, so that $\nu$ is essentialy trivial.\endproof}
Dress and Wenzel have shown that
\begin{itemize}
\item if $q$ is a prime power and $M=PG(q,r)$, then $\mathbb{T}_M^{(0)}\cong GF(q)^*$;
\item if $M$ is binary, then $\mathbb{T}_M^{(0)}$ is trivial if $M$ has $F_7$ as a minor and $\mathbb{T}_M^{(0)}\cong C_2$ otherwise.
\end{itemize}
In each case, the inner Tutte group is a torsion group, hence the matroid is rigid.
}

Using a consideration about the {\em Tutte group}, which essentially relies on Lemma \ref{lem:tutte_val}, Dress and Wenzel showed \cite[Thm 5.11]{DressWenzel1992}:
\begin{theorem} If the inner Tutte group of a matroid $M$ is a torsion group, then $M$ is rigid. In particular:
\begin{enumerate}
\item binary matroids are rigid; and
\item if $r\geq 3$ and  $q$ is a prime power, then the finite projective space $PG(r-1,q)$ is rigid.
\end{enumerate}
\end{theorem}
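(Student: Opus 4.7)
My plan is to prove rigidity via the Tutte group machinery of Dress and Wenzel, using Lemma~\ref{lem:tutte_val} as the bridge between matroid valuations and group homomorphisms. First, I would observe that the four-term identities in Lemma~\ref{lem:tutte_val} are precisely the defining relations of $\mathbb{T}_M$ (after collapsing the involution $\epsilon$: any homomorphism from $\mathbb{T}_M$ into the $2$-torsion-free group $(\R,+)$ must send $\epsilon \mapsto 0$, and in the additive picture the relation $\epsilon[Fac][Fbd][Fad]^{-1}[Fbc]^{-1}=1$ becomes exactly the equation of Lemma~\ref{lem:tutte_val}). Hence any valuation $\nu$ of $M$ extends uniquely to a group homomorphism $h^\nu : \mathbb{T}_M \to (\R,+)$ with $[B] \mapsto \nu(B)$ and $\epsilon \mapsto 0$.

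Next, I would identify the \emph{trivial} valuations with those homomorphisms that factor through the degree map $c:\mathbb{T}_M \to \Z^E$. Since $c([B])=e_B$, a valuation of the form $\nu(B)=e_B^T\alpha$ is exactly the composition $\mathbb{T}_M \xrightarrow{c} \Z^E \xrightarrow{\langle \alpha,\cdot\rangle} \R$. Equivalently, $\nu$ is trivial if and only if $h^\nu$ vanishes on $\mathbb{T}_M^{(0)}=\ker c$, and two valuations differ by a trivial one if and only if the corresponding homomorphisms agree on $\mathbb{T}_M^{(0)}$.

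The general assertion then reduces to a one-line torsion argument. If $\mathbb{T}_M^{(0)}$ is a torsion group, every homomorphism $\mathbb{T}_M^{(0)} \to (\R,+)$ must be zero, because $(\R,+)$ is torsion-free. Consequently $h^\nu\lvert_{\mathbb{T}_M^{(0)}}=0$ for every valuation $\nu$, which by the previous paragraph means that $\nu$ differs from the zero valuation by a trivial one, so $\nu$ itself is trivial. Hence $M$ is rigid.

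The genuine work, and where I expect the main obstacle to lie, is in establishing the two concrete cases (1) and (2) by explicitly computing $\mathbb{T}_M^{(0)}$. For $PG(r-1,q)$ with $r\geq 3$, one interprets the Dress--Wenzel relations as cross-ratio identities in the ambient projective geometry and shows that $\mathbb{T}_M^{(0)}$ is generated by these cross-ratios, yielding an isomorphism $\mathbb{T}_{PG(r-1,q)}^{(0)} \cong GF(q)^*$, a cyclic group of order $q-1$ and thus torsion. For arbitrary binary matroids, one exploits the rigidity of the relations imposed by the absence of $U_{2,4}$-minors: a careful case analysis (essentially tracing through the relations supported on any connected binary matroid, with $F_7$ playing a pivotal role) shows that $\mathbb{T}_M^{(0)}$ is either trivial (when $F_7$ is a minor of $M$) or isomorphic to $C_2$, in both cases torsion. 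Both computations, being purely combinatorial presentations-of-abelian-groups arguments, are cleanly separable from the valuation-theoretic content of the general theorem.
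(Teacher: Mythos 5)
Your proposal is correct and follows the route the theorem was originally proved by. The paper simply cites this result as \cite[Thm 5.11]{DressWenzel1992} and does not give a proof, but your reconstruction---each valuation $\nu$ determines a homomorphism $h^\nu:\mathbb{T}_M\to(\R,+)$ by killing the $\epsilon[Fac][Fbd][Fad]^{-1}[Fbc]^{-1}$ relations via Lemma~\ref{lem:tutte_val}, triviality of $\nu$ is equivalent to $h^\nu$ vanishing on $\mathbb{T}_M^{(0)}=\ker c$ (using that any homomorphism from a subgroup of $\Z^E$ into $\R$ extends to one of the form $\langle\alpha,\cdot\rangle$), and torsion-freeness of $(\R,+)$ then forces this vanishing---is exactly the Dress--Wenzel argument the authors allude to. You also correctly identify that the real content of parts (1) and (2) lies in computing $\mathbb{T}_M^{(0)}$ ($\cong GF(q)^*$ for $PG(r-1,q)$; trivial or $\cong C_2$ for binary matroids according as $F_7$ is or is not a minor), which the paper, like you, appropriately defers to Dress and Wenzel rather than reproving.
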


Rigidity is a rather strong condition of matroids. The following is straightforward from the definition of matroid valuations.
\begin{lemma} Let $M=(E, \BB)$ be a matroid of rank $d$, and let $B_0\in \BB$ be such that $B_0-i+j\in \BB$ for all $i\in B_0$ and all $j\in E\del B_0$. Then $\nu: \binom{E}{d}\rightarrow\Rinf$ defined by
$$\nu(B):=\left\{\begin{array}{ll}
v&\text{if }B=B_0\\
0&\text{if } B\in \BB, B\neq B_0\\
\infty & \text{otherwise }\\
\end{array}\right.$$
is a valuation of $M$ for all $v\geq 0$.
\end{lemma}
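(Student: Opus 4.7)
The plan is to verify the two valuation axioms (V1) and (V2) for $\nu$. Axiom (V1) is immediate since $\nu(B_0) = v < \infty$. For (V2), fix $B, B' \in \binom{E}{d}$ and $i \in B \setminus B'$; I need to produce $j \in B' \setminus B$ with $\nu(B) + \nu(B') \geq \nu(B-i+j) + \nu(B'+i-j)$. If $B \notin \BB$ or $B' \notin \BB$ the left-hand side is $\infty$ and any $j \in B' \setminus B$ works, and if $v = 0$ any exchange supplied by the basis exchange axiom (B2) of $M$ makes both sides $0$. So assume $v > 0$ and $B, B' \in \BB$.

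If $B = B_0$ (the case $B' = B_0$ being symmetric), then the left-hand side equals $v$; for every $j \in B' \setminus B_0$ the special property of $B_0$ gives $B_0 - i + j \in \BB \setminus \{B_0\}$, contributing $0$ to the right-hand side, and (B2) applied to $B_0, B', i$ produces such $j$ with additionally $B' + i - j \in \BB$ (contributing at most $v$). The remaining case is $B, B' \in \BB \setminus \{B_0\}$ with $v > 0$: the left-hand side is $0$ and I must find $j$ with both $B - i + j$ and $B' + i - j$ in $\BB \setminus \{B_0\}$. Direct inspection shows $B - i + j = B_0$ forces $i \notin B_0$ and pins $j$ as the unique $j_0 \in B_0 \setminus B$ with $B = B_0 - j_0 + i$, while $B' + i - j = B_0$ forces $i \in B_0$ and pins $j$ as the unique $j_1 \in B' \setminus B_0$ with $B' = B_0 - i + j_1$. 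These are mutually exclusive, so the natural split is on whether $i \in B_0$.

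If $i \notin B_0$ and the problematic $j_0$ exists, note $B \subseteq B_0 + i$ and $B - i + j = B_0 - j_0 + j$, so the special property yields $B - i + j \in \BB \setminus \{B_0\}$ for every $j \in (B' \setminus B) \setminus B_0$; to obtain such $j$ also satisfying $B' + i - j \in \BB$, I argue by contradiction via fundamental circuits. If no such $j$ lay in the fundamental circuit $C(i, B')$, then $C(i, B') \cap B' \subseteq B \cup B_0 \subseteq B_0 + i$, so $C(i, B') \subseteq B_0 + i = C(i, B_0)$, where the last identity follows from the special property forcing every element of $B_0$ into $C(i, B_0)$. Circuit incomparability then gives $C(i, B') = C(i, B_0)$, hence $B_0 \subseteq B'$ and thus $B_0 = B'$, contradicting $B' \neq B_0$. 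If instead $i \in B_0$ and the problematic $j_1$ exists, I use a two-step exchange: pick $i^* \in B \setminus B_0$ (nonempty since $B \neq B_0$), set $\tilde B := B_0 - i + i^* \in \BB$ by the special property, and apply (B2) to $B, \tilde B$ at $i \in B \setminus \tilde B$ to obtain $j \in \tilde B \setminus B = B_0 \setminus B$ with $B - i + j \in \BB$; this $j$ lies in $B' \setminus B$ (since $j \in B_0 \setminus \{i\} \subseteq B' = B_0 - i + j_1$), and $B' + i - j = B_0 + j_1 - j \in \BB \setminus \{B_0\}$ by one more application of the special property with $j \in B_0$ and $j_1 \notin B_0$.

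The main obstacle is this last case $B, B' \in \BB \setminus \{B_0\}$: here a naive appeal to (B2) may hand us a $j$ accidentally producing $B_0$ on the right, and ruling this out requires exploiting the special property of $B_0$, either via the fundamental-circuit identity $C(i, B_0) = B_0 + i$ or via the auxiliary basis $\tilde B$.
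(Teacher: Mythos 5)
The paper states this lemma without proof, merely calling it ``straightforward from the definition,'' so there is no paper proof to compare against. Your argument is correct, and it shows that the claim is not quite as immediate as the paper's phrasing suggests: the only genuinely delicate point is the case $B,B'\in\BB\setminus\{B_0\}$ with $v>0$, where a blind application of (B2) might return a $j$ for which one of the two exchange sets equals $B_0$, and you correctly isolate this as the crux. Your handling is sound: you observe that at most one $j$ can produce $B_0$ on the left (forcing $i\notin B_0$) and at most one $j$ can produce $B_0$ on the right (forcing $i\in B_0$), so the two obstructions never coexist; when $i\notin B_0$ you rule out the bad $j$ via the identity $C(i,B_0)=B_0+i$ (a direct consequence of the hypothesis on $B_0$) together with circuit incomparability, and when $i\in B_0$ you construct the auxiliary basis $\tilde B=B_0-i+i^*$ and apply (B2) to $B,\tilde B$ to force the exchange element $j$ into $B_0\setminus B$, which automatically avoids both $B_0$-outcomes. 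I verified each of these steps and they hold.

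Two small stylistic remarks. First, you leave implicit the easy subcases where neither $j_0$ nor $j_1$ exists (then $(B2)$ applied directly to $B,B',i$ already yields a $j$ avoiding $B_0$ on both sides); it would be cleaner to say this in one line. Second, the two subcases are handled by genuinely different devices (a fundamental-circuit argument on one side, an auxiliary-basis trick on the other); this asymmetry is a little jarring but both devices are correct, and the auxiliary-basis trick in the $i\in B_0$ branch is a nice idea precisely because there is no analogue of the inclusion $C(i,B')\subseteq B_0+i$ when $i$ sits inside $B_0$.
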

We will demonstrate next how conditions weaker than strict
rigidity of $M$ can be used to derive linear representations from algebraic representations.


\subsection{Lazarson matroids}
 Lindstr\"om's technique for deriving a linear representation of a matroid from an algebraic representation was applied on at least three occasions: by Bernt Lindstr\"om \cite{Lindstrom1985}, to Lazarson matroids; by  Gary Gordon \cite{Gordon1988}, to Reid geometries; and more recently by Aner Ben-Efraim \cite{Ben-Efraim2016}, to a certain single-element extension of a Dowling geometry of rank 3. In each case, the matroids in question are not rigid. We  illustrate the role of the Lindstr\"om valuation in the argumentation here.

Two valuations $\nu, \nu':\binom{E}{d}\rightarrow \Rinf$ are {\em equivalent}, notation $\nu\sim \nu'$,  if there exists an $\alpha\in \R^E$ so that $\nu(B)=\nu'(B)+e_B^T\alpha$ for all $B\in \binom{E}{d}$. Thus a valuation $\nu$ is trivial if and only if $\nu\sim 0$.

A valuation
$\nu:\binom{E}{d}\rightarrow\Rinf$ of a matroid $M$ induces valuations of the minors and  the dual of $M$.
If $i$ is not a coloop of $M$, then $\nu\del i:\binom{E-i}{d}\rightarrow\Rinf$ obtained by restricting $\nu$ to $\binom{E-i}{d}$ is a valuation of $M\del i$, and if $i$ is not a loop, then $\nu/ i:\binom{E-i}{d-1}\rightarrow\Rinf$ determined by $\nu/i:B\mapsto \nu(B+i)$ is a valuation of $M/i$.
Finally,  $\nu^*:\binom{E}{|E|-d}\rightarrow\Rinf$ determined by $\nu: B\mapsto \nu(\overline{B})$ is a valuation of $M^*$.
\begin{lemma} Suppose $M$ is a matroid, and $\{i,j\}$ is a circuit of $M$. If $\nu,\nu'$ are valuations of $M$ so that $\nu\del j =\nu'\del j$, then $\nu\sim\nu'$.\end{lemma}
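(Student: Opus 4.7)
My plan is to find an $\alpha \in \R^E$ supported on $\{j\}$ such that $\nu(B) = \nu'(B) + e_B^T \alpha$ for every $B \in \binom{E}{d}$, which gives $\nu \sim \nu'$. Since $\{i,j\}$ is a circuit, $i$ and $j$ are parallel non-loops: no basis of $M$ contains both, and replacing $j$ by $i$ in any basis containing $j$ produces another basis. The hypothesis $\nu\del j = \nu'\del j$ says that $\nu(B) = \nu'(B)$ for every basis $B$ with $j \notin B$. Writing $\delta(B) := \nu(B) - \nu'(B)$ on bases of $M$, my reduction is therefore to prove that $\delta$ is constant on $\BB_j$, the set of bases of $M$ that contain $j$; then setting $\alpha := \delta(B_0)\, e_j$ for any $B_0 \in \BB_j$ finishes the job.

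To prove constancy on $\BB_j$, I will first handle a single-element exchange step. Given $B_1, B_2 \in \BB_j$ with $|B_1 \cap B_2| = r(M) - 1$, write $B_1 = F + j + x$ and $B_2 = F + j + y$ with $|F| = r(M) - 2$ and $x \neq y$ in $E \setminus (F \cup \{j\})$. Since $j$ lies in both bases, neither contains $i$, so $i \notin F$ and $x, y \neq i$; parallelism of $i$ and $j$ then gives that $F + i + x$ and $F + i + y$ are bases of $M$, while the circuit $\{i,j\}$ gives that $F + i + j$ is not. I would then apply Lemma \ref{lem:tutte_val} with $a = j$, $b = i$, $c = x$, $d = y$, first to $\nu$ and then to $\nu'$; subtracting the two resulting equalities and using that $\delta$ vanishes on the two $F + i + \cdot$ terms (bases not containing $j$) yields $\delta(B_1) = \delta(B_2)$.

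To finish, I would invoke the basis-exchange axiom: the map $B \mapsto B - j$ is a bijection between $\BB_j$ and the bases of $M/j$, so by (B2) any two elements of $\BB_j$ are connected by a sequence of one-element exchanges in $\BB_j$, each of which preserves $\delta$ by the previous paragraph. Hence $\delta$ is constant on $\BB_j$, which closes the argument. The only delicate point will be the verification of the hypotheses of Lemma \ref{lem:tutte_val}: the circuit assumption on $\{i,j\}$ has to do double duty, making $F + i + j$ dependent while the parallelism of $i$ and $j$ keeps $F + i + x$ and $F + i + y$ basic. Once that is set up, the subtraction collapses cleanly and the rest is bookkeeping.
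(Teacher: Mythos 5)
Your proof is correct and follows essentially the same route as the paper: apply Lemma \ref{lem:tutte_val} across the circuit $\{i,j\}$ to a quadruple of adjacent bases and propagate by connectivity of the basis exchange graph. The only cosmetic difference is that you track $\delta=\nu-\nu'$ directly on bases containing $j$, whereas the paper shows that the swap quantity $\nu(B)-\nu(B-i+j)$ is constant on bases containing $i$ for each valuation separately and then compares the two constants; the underlying use of Lemma \ref{lem:tutte_val} and basis-graph connectivity is the same.
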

\proof Let $\nu$ be any valuation of $M$, and $B, B'$ be such that $i\in B\cap B'$ and $B'=B-k+l$, where $j\neq k,l$.
By Lemma \ref{lem:tutte_val}, we have $\nu(B)+\nu(B'-i+j)=\nu(B-i+j)+\nu(B')$, since $B-k+j=B'-l+j$ is not a basis as it contains the dependent set $\{i,j\}$.
Hence $\nu(B)-\nu(B-i+j)=\nu(B')-\nu(B'-i+j)$ for any adjacent bases $B, B'$ both containing $i$.
Since any two bases of $M\del j$ are connected by a walk along adjacent bases, it follows that there is a constant $c$ so that $\nu(B)-\nu(B-i+j)=c$ for any basis $B$ of $M\del j$ with $i\in B$.
If $\nu'$ is any other valuation of $M$, then by the same reasoning there is a $c'$ so that $\nu'(B)-\nu'(B-i+j)=c'$ for any basis $B$ of $M\del j$ with $i\in B$.
If $\nu'\del j =\nu\del j$, then $\nu(B)+e_B^T(ce_j)=\nu'(B)+e_B^T(c'e_j)$ for all bases $B$ of $M$, and
it follows that $\nu\sim \nu'$, as required.\endproof

If $M$ is a matroid then $si(M)$, the {\em simplification} of $M$, is a matroid whose elements are the parallel classes of $M$, and which is isomorphic to any matroid which arises from $M$ by restricting to one element from each parallel class. Directly from the previous lemma, we obtain:
\begin{lemma} \label{lem:si}Suppose $M$ is a matroid. If $si(M)$ is rigid, then $M$ is rigid.\end{lemma}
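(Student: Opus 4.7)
The plan is to lift the valuation from $si(M)$ back up to $M$ one parallel element at a time, using the preceding lemma as the key tool.

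First I would reduce to the case that $M$ is loopless: loops lie in no basis, so a valuation $\nu$ of $M$ is the same data as a valuation of $M \del L$ where $L$ is the set of loops of $M$, and $si(M \del L) = si(M)$; a trivializing vector for the latter extends arbitrarily over $L$ to one for the former. Now pick a transversal $T$ of the parallel classes of $M$ and enumerate $E \setminus T = \{k_1, \ldots, k_m\}$, writing $p(k_j) \in T$ for the parallel partner of $k_j$. Set $M^{(j)} := M \del \{k_{j+1}, \ldots, k_m\}$, so that $M^{(0)} \cong si(M)$ and $M^{(m)} = M$; for a valuation $\nu$ of $M$ let $\nu^{(j)} := \nu \del \{k_{j+1}, \ldots, k_m\}$, a valuation of $M^{(j)}$.

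The core is an induction on $j$ showing each $\nu^{(j)}$ is trivial on $M^{(j)}$. The base case $j = 0$ is exactly the hypothesis that $si(M)$ is rigid. For the inductive step, write $\nu^{(j-1)}(B) = e_B^T \alpha$ on bases $B$ of $M^{(j-1)}$, extend $\alpha$ to $\tilde\alpha$ on $T \cup \{k_1, \ldots, k_j\}$ by $\tilde\alpha_{k_j} := \alpha_{p(k_j)}$, and let $\tilde\mu(B) := e_B^T \tilde\alpha$ be the corresponding trivial valuation of $M^{(j)}$. Since bases of $M^{(j-1)}$ all avoid $k_j$, the restrictions $\tilde\mu \del k_j$ and $\nu^{(j)} \del k_j$ both coincide with $\nu^{(j-1)}$ on the nose. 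As $\{p(k_j), k_j\}$ is a circuit of $M^{(j)}$, the preceding lemma yields $\nu^{(j)} \sim \tilde\mu$; hence $\nu^{(j)}$ is trivial. Taking $j = m$ gives that $\nu$ itself is trivial, so $M$ is rigid.

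The main thing to be careful about is that the preceding lemma requires an honest equality $\tilde\mu \del k_j = \nu^{(j)} \del k_j$, not merely equivalence, which is why I formulate the inductive hypothesis as an explicit equality $\nu^{(j-1)}(B) = e_B^T \alpha$ and extend the coefficient vector coordinate-by-coordinate rather than appealing to some abstract equivalence class. Beyond this bookkeeping there is no real obstacle: parallelism is preserved under restriction (so $\{p(k_j), k_j\}$ remains a circuit in $M^{(j)}$), and the inductive step reduces to a single invocation of the preceding lemma.
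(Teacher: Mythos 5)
Your proof is correct and fills in exactly the argument the paper intends by "Directly from the previous lemma, we obtain": you peel off parallel elements one at a time, and at each step apply the circuit-$\{i,j\}$ lemma to a trivial comparison valuation whose restriction matches on the nose. The bookkeeping you flag — extending the trivializing vector $\alpha$ coordinate-by-coordinate so that $\tilde\mu\del k_j = \nu^{(j)}\del k_j$ as an honest equality rather than an equivalence — is indeed the one place where care is needed, and you handle it correctly (note that the value $\tilde\alpha_{k_j}$ is in fact irrelevant; any choice works, since $k_j$ lies in no basis of $M^{(j-1)}$). The loop reduction at the start is also sound.
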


\begin{lemma} Suppose $M\cong U_{1,n}$ or $M\cong U_{n-1,n}$. Then $M$ is rigid.\end{lemma}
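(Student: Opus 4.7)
The plan is to handle the two uniform matroids in turn, dispatching $U_{1,n}$ by a direct calculation and reducing $U_{n-1,n}$ to the first case by the duality of valuations already noted in the subsection. The underlying reason the lemma is true at all is that in each case the exchange axiom (V2) imposes no nontrivial constraint beyond (V1): any $(d{-}1)$-set $F$ common to two bases $B,B'$ of rank $d \in \{1,n{-}1\}$ forces the exchange to be unique, so the inequality in (V2) reduces to an equality of the form $\nu(B)+\nu(B') = \nu(B')+\nu(B)$. Consequently the space of valuations has the same dimension as the space of trivial valuations.

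First I treat $M \cong U_{1,n}$, whose bases are the singletons $\{i\}$ for $i \in E$. A valuation $\nu$ is specified by the finite values $\nu(\{i\})$, with (V2) vacuous as noted above. Setting $\alpha \in \R^E$ by $\alpha_i := \nu(\{i\})$ yields $\nu(\{i\}) = e_{\{i\}}^T\alpha$ for every basis, so $\nu$ is trivial.

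Next I treat $M \cong U_{n-1,n}$, whose bases are $B_k := E \setminus \{k\}$. Since $U_{n-1,n} = U_{1,n}^*$, the dual valuation $\nu^*(\{k\}) := \nu(\overline{\{k\}}) = \nu(B_k)$ is a valuation of $U_{1,n}$ and hence, by the first case, trivial: there exists $\beta \in \R^E$ with $\nu(B_k) = \beta_k$ for all $k$. I will exhibit $\alpha \in \R^E$ with $\nu(B_k) = e_{B_k}^T \alpha = \sum_{i \neq k} \alpha_i$ by setting $s := \frac{1}{n-1}\sum_j \beta_j$ and $\alpha_i := s - \beta_i$; a short calculation gives $\sum_i \alpha_i = s$ and therefore $\sum_{i \neq k}\alpha_i = s - \alpha_k = \beta_k = \nu(B_k)$, as required.

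There is no serious obstacle; the only point meriting care is the arithmetic in the final step, which is the standard device for passing between a trivial valuation and its dual (one must divide by $n-1$, so the assumption $n \geq 2$, implicit in $U_{n-1,n}$ being defined with distinct rank and corank, is used). The case $n = 1$ is degenerate and immediate.
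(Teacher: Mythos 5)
Your proof is correct and takes a more direct, elementary route than the paper's. For $U_{1,n}$, you observe that the exchange axiom (V2) is vacuous (the exchange is always unique) and therefore any assignment of finite values to singleton bases is a valuation, which is visibly trivial by taking $\alpha_i = \nu(\{i\})$. The paper instead invokes the preceding simplification lemma: all elements of $U_{1,n}$ are parallel, so $si(U_{1,n}) \cong U_{1,1}$, which is rigid, and rigidity descends from the simplification. Both are quick; the paper's version illustrates the usefulness of the simplification lemma as a modular tool, while yours makes the structure of the valuation space explicit. For $U_{n-1,n}$, both you and the paper pass to the dual, but the paper simply asserts that rigidity is preserved under duality, whereas you work out the arithmetic showing how to convert a trivial valuation of $M^*$ into a trivial valuation of $M$ (which is where the division by $n-1$, i.e.\ the rank of $M$, appears).

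One small caveat about your final remark: the $n=1$ case is not quite ``immediate.'' If $n=1$ then $U_{n-1,n}=U_{0,1}$ has the single basis $\emptyset$, and with the paper's definition a valuation $\nu(\emptyset)=c\neq 0$ is \emph{not} trivial, since $e_\emptyset^T\alpha=0$ forces $c=0$. So the duality step genuinely needs the rank to be positive, which holds exactly when $n\geq 2$; for $n=1$ the lemma should be read as only asserting rigidity of $U_{1,1}$. The paper's proof has the same implicit restriction, so this is not a flaw in your argument specifically, but the phrase ``degenerate and immediate'' overstates what actually happens there.
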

\proof If $M\cong U_{1,n}$, then $si(M)\cong U_{1,1}$ which is rigid. Hence by Lemma \ref{lem:si}, $M$ is rigid. If $M=U_{n-1,n}$, then $M^*=U_{1,n}$ is rigid, and hence $M$ is rigid.\endproof

Let $n\geq 2$ be a natural number, and let $M_n^-$ be matroid which is linearly represented over $\mathbb{Q}$ by the matrix
\begin{equation}\label{eq:MpLind}
\bordermatrix{
     & x_0    &  x_1   & \cdots & x_n & z & y_0   & y_1   &\cdots & y_n\cr
     & 1    &        &        &       & 1 & 0     & 1     &       & 1\cr
     &        & 1    &        &       & 1 & 1     & 0     &       & 1\cr
     &        &        & \ddots &     & \vdots &\vdots &\vdots &\ddots & \vdots\cr
     &        &        &        & 1   & 1 & 1     & 1     &       & 0\cr
    }.
\end{equation}
Then $M_n$, the {\em Lazarson matroid}, is the matroid on the same ground set with base set $\BB(M_n):=\BB(M_n^-)\del\{ \{y_0,\ldots, y_n\}\}$.
Note that $M_2$ is the Fano matroid, and $M_2^-$ is the non-Fano matroid.
\begin{theorem}[Lindstr\"om \cite{Lindstrom1985}]  Let $p$ be a prime. If $M_p$ has an algebraic representation over a field $K$, then the characteristic $K$ is  $p$.
\end{theorem}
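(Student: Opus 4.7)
The plan is to use the Lindstr\"om-valuation machinery to extract a linear representation of $M_p$ in characteristic $q := \cha K$, and then invoke the classical fact that the Lazarson matroid is linearly representable over a field if and only if that field has characteristic $p$. First, $\cha K = 0$ is ruled out by Ingleton's Theorem~\ref{thm:CharZero}, which would upgrade the algebraic representation to a linear one; but combining the triangles $\{z,x_i,y_i\}$, the long circuit $\{z,x_0,\ldots,x_p\}$, and the Lazarson circuit $Y := \{y_0,\ldots,y_p\}$ produces the relation $p\cdot z = 0$ in any linear representation, which is impossible in characteristic zero. So from now on $q > 0$, and the task is to show $q = p$.

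Given the algebraic representation $X \subseteq K^E$, let $\nu = \nu^X$ be its Lindstr\"om valuation; by Corollary~\ref{cor:MatFlock} the support matroid of $\nu$ is $M_p$, so in particular $\nu(Y)=\infty$. The key reduction is to produce an $\alpha \in \Z^E$ with $M^\nu_\alpha = M_p$: once such $\alpha$ exists, Theorem~\ref{cor:AlgFlock} supplies a very general $v \in X$ for which the tangent space $V_\alpha = T_{\alpha v}(\alpha X)$ is a $(p+1)$-dimensional linear representation of $M_p$ over $K$, and the same circuit computation as above then forces $q = p$. By Lemma~\ref{lem:triv}, the existence of such an $\alpha$ is equivalent to the triviality of $\nu$.

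To establish triviality of $\nu$, I would first contract $z$. The triangles $\{z,x_i,y_i\}$ turn $x_i$ and $y_i$ into parallel elements of $M_p/z$, and the long circuit $\{z,x_0,\ldots,x_p\}$ becomes a circuit of the simplification; hence $\mathrm{si}(M_p/z) \cong U_{p,p+1}$, which is rigid by the uniform-matroid lemma above. Lemma~\ref{lem:si} then yields rigidity of $M_p/z$, so the induced valuation $\nu/z$ is trivial; this identifies $\nu(B)$ with $e_B^T\alpha$ on every basis $B$ of $M_p$ containing $z$, for some $\alpha \in \R^E$. To extend triviality to the remaining bases, I would apply Lemma~\ref{lem:tutte_val} at the non-basis $Y$: for each pair $i \neq j$ and $c,d \in \{x_0,\ldots,x_p,z\}$ the Tutte square with $F = Y - y_i - y_j$, $a = y_i$, $b = y_j$, and $F + a + b = Y \notin \BB(M_p)$ produces the equation
\[
\nu((Y-y_j)+c) + \nu((Y-y_i)+d) = \nu((Y-y_j)+d) + \nu((Y-y_i)+c),
\]
and the choice $d = z$ couples the $y$-heavy bases on the left to the already-known values on $z$-containing bases on the right. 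A finite chase through additional Tutte squares anchored at the fundamental circuits $\{y_i\} \cup \{x_k : k \neq i\}$ then propagates triviality to all of $\BB(M_p)$.

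The main obstacle is this last propagation: verifying that the Tutte squares at $Y$ and at the fundamental circuits interconnect the basis-exchange graph of $M_p$ densely enough to pin $\nu$ down to $e_B^T\alpha$ on every basis, and not merely on $z$-containing ones. In Tutte-group language, this is the statement that after contracting $z$ (which kills one generator), the remaining non-torsion degree of freedom in valuations of $M_p^-$ --- corresponding, via Subsection~\ref{sec:Rational}, to the $p$-adic valuation of the rational representation --- is exactly annihilated by the support constraint $\nu(Y) = \infty$ inherited from $M_p$ being the support matroid of $\nu$.
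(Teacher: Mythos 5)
Your plan hinges on showing that the Lindstr\"om valuation $\nu=\nu^X$ is trivial, so that some $\alpha$ satisfies $M^\nu_\alpha=M_p$ and yields a tangent space representing $M_p$ itself over $K$. But the argument you sketch --- rigidity of $M_p/z$ plus Tutte squares anchored at the circuit $Y=\{y_0,\ldots,y_p\}$ --- is purely combinatorial and would apply to \emph{every} valuation of $M_p$, not merely Lindstr\"om valuations. If it went through it would prove $M_p$ is rigid, and the paper explicitly notes (just before introducing the Lazarson matroids) that the matroids treated by Lindstr\"om, Gordon and Ben-Efraim are \emph{not} rigid. So the ``main obstacle'' you flag at the end is not an obstacle you can chase down with more Tutte relations; the degree of freedom you are trying to kill genuinely survives the constraint $\nu(Y)=\infty$. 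In particular Lemma~\ref{lem:triv} tells you that an $\alpha$ with $M^\nu_\alpha=M_p$ exists \emph{only if} $\nu$ is trivial, so your reduction cannot be completed.

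The paper's own argument (given for the generalization $M_n$) sidesteps triviality entirely. Lemma~\ref{lem:laz} only pins $\nu$ down on the \emph{non-central} bases: there is a $\beta$ with $\BB^\nu_\beta$ containing all non-central bases, while central bases may drop out. Choosing $\alpha\in C^\nu_\beta\cap\Z^E$ and a tangent space $W$ of $\alpha X$ with $M(W)=M^\nu_\alpha$, one observes that every $B\in\binom{E}{n+1}$ with $|B\setminus B^0|\leq 2$ (all such $B$ are non-central) is a basis of $M^\nu_\alpha$ exactly when it is a basis of $M_n$; this local information suffices to scale a representing matrix $A$ to the integer matrix \eqref{eq:MpLind}. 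Now $M^\nu_\alpha$ may well be a proper sub-matroid of $M_n$, but since $\BB^\nu_\alpha\subseteq\BB(M_n)$ and $B^1=\{y_0,\ldots,y_n\}\notin\BB(M_n)$, one still has $\det A_{B^1}=n(-1)^n=0$ in $K$, forcing $\cha K\mid n$. For $n=p$ prime this gives $\cha K=p$, with the characteristic-$0$ case excluded exactly as you did. The right move, then, is to abandon triviality and work with the weaker Lemma~\ref{lem:laz}, finishing by the determinant computation rather than by extracting a full linear representation of $M_p$.
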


Let us say that a basis $B$ of $M_n$ or $M_n^-$ is {\em central} if it is of the form $B=\{x_i: i\not\in I\}\cup\{y_i: i\in I\}$ for some $I\subseteq \{0,\ldots, n\}$ such that $|I|>2$.

\begin{lemma}\label{lem:laz}
Let $n\geq 2$, and let $\nu$ be a valuation of $M_n=(E,\BB)$. Then there is a $\beta\in\R^E$ such that if $B\in \BB$ and $B$ is not central, then $B\in \BB^\nu_\beta$.\end{lemma}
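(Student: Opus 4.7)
The plan is to construct $\beta \in \R^E$ in two stages, then verify that $\mu(B) := \nu(B) - e_B^T \beta$ vanishes on non-central bases and is nonnegative on central ones. The first stage exploits the observation that $M_n \setminus \{y_0, \ldots, y_n\} = M_n|\{x_0, \ldots, x_n, z\}$ is isomorphic to $U_{n+1, n+2}$, the dual of $U_{1, n+2}$, and hence rigid by the earlier lemma. Each $y_i$ is a non-coloop of $M_n$, so $\nu$ restricts to a valuation on this minor; rigidity forces that restriction to be trivial, yielding coefficients $\beta_{x_0}, \ldots, \beta_{x_n}, \beta_z$ with $\nu(B_0) = \sum_i \beta_{x_i}$ and $\nu(B_0 - x_i + z) = \sum_{k \neq i} \beta_{x_k} + \beta_z$ for every $i$.

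The second stage defines $\beta_{y_i} := \nu(B_0 - x_j + y_i) - \sum_{k \neq j} \beta_{x_k}$ for any fixed $j \neq i$. Independence of $j$ follows from Lemma \ref{lem:tutte_val} applied with $F := B_0 - x_j - x_{j'}$ for some $j' \neq i,j$, and $a := y_i, b := z, c := x_j, d := x_{j'}$: because $x_i \in F$, the set $F + a + b = F + y_i + z$ contains the three-circuit $\{x_i, y_i, z\}$ and is not a basis, while the four other combinations $F + \{a,b\} + \{c,d\}$ are the non-central bases $B_0 - x_{j'} + y_i$, $B_0 - x_j + y_i$, $B_0 - x_{j'} + z$, $B_0 - x_j + z$. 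The Tutte equation combined with Stage 1 yields $\nu(B_0 - x_j + y_i) - \nu(B_0 - x_{j'} + y_i) = \beta_{x_{j'}} - \beta_{x_j}$, which is exactly the compatibility condition that makes $\beta_{y_i}$ well defined.

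With $\beta$ in hand, propagate $\mu = 0$ outward from the cluster $\{B_0\} \cup \{B_0 - x_i + z\}_i \cup \{B_0 - x_j + y_i\}_{i \neq j}$ to every non-central basis by iterated applications of Lemma \ref{lem:tutte_val}: for each new non-central $B$, choose $F$ and $a, b, c, d$ so that $F + a + b$ contains either a $3$-circuit $\{x_k, y_k, z\}$ or a length-$(n{+}1)$ circuit of the form $\{x_m : m \neq k\} \cup \{y_k\}$, arranging that $B$ is one of the four resulting bases while the other three are already established. Organize this induction by the number of $y$-elements in $B$ and by whether $B$ contains $z$. For the minimality check, let $B$ be central (so $3 \leq |I| \leq n$), fix $k \in I$, and apply (V2) to the triple $(B, B_0, y_k)$: the exchange index $j = x_k$ is forbidden since $B_0 - x_k + y_k$ is not a basis, so $j = x_m$ for some $m \in I \setminus \{k\}$, and then both $B - y_k + x_m$ and $B_0 - x_m + y_k$ are non-central (they fail the central form because $m \neq k$). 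Substituting $\mu = 0$ at these bases gives $\mu(B) \geq 0$, so the non-central bases indeed attain the minimum of $\mu$ and therefore lie in $\BB^\nu_\beta$.

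The main obstacle is the iteration in Stage 3: one has to order the non-central bases so that at every step exactly one position of the chosen Tutte equation is unknown and the other three are already handled, and the combinatorics becomes delicate for bases that contain both $z$ and two or more $y_i$'s, where several reductions have to be chained through intermediate non-central bases.
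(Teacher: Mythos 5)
Your overall strategy — normalize $\nu$ to a $\mu := \nu - e_\bullet^T\beta$ that you hope vanishes on all non-central bases, then use the exchange axiom to show $\mu \geq 0$ on central ones — is sound, and Stages~1, 2, and~4 are correct as written. Stage~4 in particular matches the paper's minimality argument. But your route through Stages~1--3 is genuinely different from the paper's, and the difference is exactly what creates the gap you concede in Stage~3.

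The paper does not normalize on the restriction $M_n|_{\{x_0,\dots,x_n,z\}} \cong U_{n+1,n+2}$ but on the \emph{contraction} $M_n/z$. Its simplification is $U_{n,n+1}$, so by Lemma~\ref{lem:si} the full matroid $M_n/z$ (which has all the parallel pairs $\{x_i,y_i\}$) is rigid, and the induced valuation $\nu/z$ is trivial. This immediately forces $\nu(B)=0$ for \emph{every} basis $B$ containing $z$ — all $(n+1)\cdot 2^n$ of them — after a single normalization, and a further one-parameter twist sets $\nu(B^0)=0$ as well. The paper then only needs to propagate outward to the $z$-free non-central bases, and the propagation steps can always reach back into this large, already-zeroed pool of $z$-bases. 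Your Stage~1 by contrast only pins down $\mu$ on $B^0$ and on the $n+1$ bases $B^0-x_i+z$; even after Stage~2, the cluster where $\mu=0$ is known is small, and you must grow it to reach all $z$-bases (whose number is exponential in $n$), all mixed $z$-free bases, and the $|I|=2$ pure bases. You acknowledge that this ordering "becomes delicate" and has to chain "through intermediate non-central bases," but you never exhibit an ordering that makes the induction close. As it stands, Stage~3 is an unverified plan, not a proof, and a reader cannot check that every non-central basis is eventually reached with exactly one unknown per Tutte equation. To close the gap you should either verify your propagation scheme carefully (with explicit inductive invariants handling the $z$-containing bases and the $|I|=2$ case), or switch your Stage~1 to the paper's contraction-based normalization, which makes the propagation much shorter because the whole $z$-containing stratum is zeroed at once.
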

\proof Since $si(M_n/z)\cong U_{n, n+1}$, the matroid $M_n/z$ is rigid. Passing to an equivalent valuation if necessary, we may assume that $\nu(B)=0$ for all $B\in \BB$ so that $z\in B$. Let $B^0:=\{x_0,\ldots, x_n\}$ and $B^1:=\{y_0,\ldots, y_n\}$.
Again by moving to an equivalent valuation $B\mapsto \nu(B)+e_B^T\alpha$, where $\alpha=a(\one-(n+1)e_z)$ for an appropriate $a\in \R$, we may ensure that $\nu(B^0)=0$ while preserving that $\nu(B)=0$ for all $B\in \BB$ so that $z\in B$.

We show that  if $B$ is not central and $z\not\in B$, then $\nu(B)=0$. We argue by induction on $|B\del B^0|$. Assume that $|B\del B^0|>0$. As $B$ is a basis, $z\not\in B$ and $B$ is not central, we have $x_i, y_i\in B$ and $x_j, y_j\not\in B$ for some  $i,j$.
If there is a $k\neq i$ such that $y_k\in B$, then consider the basis $B':=B+x_j-y_k+z-x_i$. The set $B+z-y_k$ is not a basis, as it contains the circuit $\{z, x_i, y_i\}$. By Lemma \ref{lem:tutte_val}, we obtain
$$\nu(B)+\nu(B')=\nu(B+z-x_i)+\nu(B+x_j-y_k).$$
Since  $z\in B'$ and $z\in B+z-x_i$,  we have $\nu(B')=\nu(B+z-x_i)=0$. Since $B+x_j-y_k$ is not central and closer to $B^0$ than $B$ is, we have $\nu(B+x_j-y_k)=0$ by induction. It follows that  $\nu(B)=0$. If there is no $k\neq i$ such that $y_k\in B$, then $B=B^0-x_j+y_i$. Noting that $B^0-x_i+y_i$ is dependent in $M_n$, we obtain
$$\nu(B)+\nu(B^0-x_i+z)=\nu(B^0)+\nu(B^0-x_i-x_j+y_i+z)$$
from Lemma \ref{lem:tutte_val}. As $z\in B^0-x_i+z$ and $z\in B^0-x_i-x_j+y_i+z$, we have $\nu(B^0-x_i+z)=\nu(B^0-x_i-x_j+y_i+z)=0$, so that $\nu(B)=\nu(B^0)=0$.

Next, we argue that if $B$ is central, then $\nu(B)\geq 0$, again by induction on  $|B\del B^0|$. Then
there are $i,j$ so that $\nu(B)+\nu(B^0)\geq \nu(B-i+j)+\nu(B^0+i-j)$.
Since $\nu(B^0)=0$, and $\nu(B-i+j),\nu(B^0+i-j)\geq 0$ as both bases are closer to $B^0$ than $B$ is, this implies that $\nu(B)\geq 0$.

It follows that $g^\nu(0)=\sup\{e_{B}^T0-\nu(B): B\in \BB\}=0$, so that $$\BB^\nu_0:=\{B\in\BB: e_{B}^T0-\nu(B)=g^\nu(0)\}=\{B\in\BB:\nu(B)=0\}$$
contains each non-central basis of $M_n$. Therefore, $\beta=0$ satisfies the lemma.
\endproof
We obtain a minor extension of Lindstr\"om's Theorem.
\begin{theorem}  Let $p$ be a prime and let $n$ be a natural number. If $M_n$  has an algebraic representation over a field of characteristic $p$, then $p$ divides $n$.
\end{theorem}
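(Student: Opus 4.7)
First, by Corollary \ref{cor:MatFlock} an algebraic representation $X$ of $M_n$ in characteristic $p$ induces a matroid flock $\alpha\mapsto M(X,\alpha)$ with support matroid $M_n$ and Lindstr\"om valuation $\nu=\nu^X$. Lemma \ref{lem:laz} provides $\beta\in\R^E$ so that $\BB^\nu_\beta$ contains every non-central basis of $M_n$. By Lemma \ref{lem:poly}, $C^\nu_\beta$ is a nonempty polyhedron cut out by a totally unimodular system with integer constants, hence contains some $\alpha\in\Z^E$. The matroid $M_\alpha:=M^\nu_\alpha$ is then a weak image of $M_n$ retaining every non-central basis, and Theorem \ref{cor:AlgFlock} applied to the Frobenius flock associated with $X$ represents $M_\alpha$ linearly by a subspace $V_\alpha\subseteq K^E$ of dimension $n+1$.

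Next I read off constraints on the linear representation. Taking $\tilde x_0,\ldots,\tilde x_n\in V_\alpha$ as the basis of $V_\alpha$ corresponding to the non-central basis $\{x_0,\ldots,x_n\}$, the circuit $\{x_0,\ldots,x_n,z\}$ of $M_n$ yields $\tilde z=\sum_j c_j\tilde x_j$, with every $c_j\neq 0$ since each $\{z\}\cup\{x_l:l\neq j\}$ is a non-central basis of $M_n$ and therefore independent in $M_\alpha$. Each $3$-circuit $\{x_i,y_i,z\}$ of $M_n$ places $\tilde y_i\in\mathrm{span}(\tilde z,\tilde x_i)$; after rescaling I write $\tilde y_i=\tilde z+\mu_i\tilde x_i$ with $\mu_i\neq 0$, since $\{y_i,z\}$ is also contained in some non-central basis of $M_n$.

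The decisive step is to use the set $C_i:=\{y_i\}\cup\{x_l:l\neq i\}$, which I claim is a circuit of $M_n$ for each $i$. It is a circuit of $M_n^-$ since the columns satisfy $y_i=\sum_{l\neq i} x_l$ in the $\Q$-representation, and I would verify its minimality in $M_n$ by exhibiting, for each $n$-element subset, a non-central basis of $M_n$ containing it: removing $y_i$ leaves a subset of $\{x_0,\ldots,x_n\}$, and removing $x_l$ (with $l\neq i$) leaves a set contained in $\{y_i\}\cup\{x_m:m\neq l\}$, which has $A\cap J=\{i\}\neq\emptyset$ and is therefore itself a non-central basis of $M_n$. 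Since the $\tilde x_l$ with $l\neq i$ are independent, the dependence of $C_i$ in $M_\alpha$ forces $\tilde y_i\in\mathrm{span}(\tilde x_l:l\neq i)$; comparing with $\tilde y_i=(c_i+\mu_i)\tilde x_i+\sum_{l\neq i}c_l\tilde x_l$ gives $c_i+\mu_i=0$, hence $c_i=-\mu_i$ for every $i$.

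Finally, $\{y_0,\ldots,y_n\}$ is a circuit of $M_n$ by construction, so it is dependent in $M_\alpha$. Expanding a non-trivial relation $\sum_i a_i\tilde y_i=0$ in the $\tilde x_j$-basis yields the system $(\sum_i a_i)c_j+a_j\mu_j=0$ for every $j$, which via $c_j=-\mu_j$ and $\mu_j\neq 0$ collapses to $a_j=\sum_i a_i$ for every $j$. Setting $s:=\sum_i a_i$ one then obtains $s=(n+1)s$, so $ns=0$; non-triviality forces $s\neq 0$, and hence $n=0$ in $K$, i.e., $p\mid n$. The main obstacle I anticipate is the verification that the $C_i$ are circuits of $M_n$ and not merely of $M_n^-$, which reduces to the combinatorial check against the non-central-basis structure sketched above; once that is in place, the rest of the argument is a short linear-algebra computation inside $V_\alpha$.
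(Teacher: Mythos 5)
Your proof is correct, and it runs along the same structural lines as the paper's: use Lemma \ref{lem:laz} together with Lemma \ref{lem:poly} to produce an integral $\alpha$ with $\BB^\nu_\alpha$ containing every non-central basis of $M_n$, extract a linear representation of $M^\nu_\alpha$ from a tangent space of $\alpha X$, and then exploit the fact that $M^\nu_\alpha$ is a weak image of $M_n$ that keeps all non-central bases but misses $\{y_0,\ldots,y_n\}$. The only place you diverge is the endgame: the paper asserts that after rescaling rows and columns the matrix can be brought into the explicit form \eqref{eq:MpLind} and then computes $\det A_{B^1}=n(-1)^n$, whereas you extract the same normalization constraints directly from circuits ($\{x_0,\ldots,x_n,z\}$, $\{x_i,y_i,z\}$, $C_i$) and solve the resulting linear system, landing on $ns=0$ with $s\neq 0$. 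This is the same computation, just made more explicit where the paper hand-waves.

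Two small points worth tightening. First, the vectors you call $\tilde x_j,\tilde y_j,\tilde z\in V_\alpha$ are really the columns of a matrix whose row span is $V_\alpha$ (so they live in $K^{n+1}$, not $K^E$); this is harmless but worth phrasing correctly. Second, writing $\tilde y_i=\tilde z+\mu_i\tilde x_i$ with coefficient $1$ on $\tilde z$ uses that $\{x_i,y_i\}$ is independent in $M^\nu_\alpha$; you only cite independence of $\{y_i,z\}$, but the missing fact follows because $\{x_i,y_i\}$ extends to a non-central basis $\{x_i,y_i\}\cup\{x_l:l\neq i,j\}$. Also, the notation ``$A\cap J$'' in your circuit check is undefined; you mean the parallel of the set $I$ in the definition of a central basis. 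Finally, note that you never actually need $C_i$ to be a circuit of $M_n$ -- its dependence in $M_n$, hence in the weak image $M^\nu_\alpha$, combined with the independence of $\{\tilde x_l:l\neq i\}$ already forces $\tilde y_i\in\mathrm{span}(\tilde x_l:l\neq i)$.
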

\proof Let $X$ be an algebraic representation of $M_n$ over a field $K$ of characteristic $p$. Without loss of generality, we may assume that $K$ is algebraically closed. Let $\nu$ be the Lindstr\"om valuation of $X$.
Let $\beta\in \R^E$ be the vector obtained in Lemma \ref{lem:laz}. As $\nu$ is integral, there is an $\alpha\in C^\nu_\beta\cap \Z^E$, so that $\BB^\nu_\alpha\supseteq \BB^\nu_\beta$.

Let $W$ be a tangent space of $\alpha X$ satisfying $M(W)=M^\nu_\alpha$ and let  $A$ be a $d\times E$ matrix whose rows span $W$. Since $B^0:=\{x_0,\ldots, x_n\}\in \BB^\nu_\alpha$, we may assume that  $A_{B^0}=I$. Since for each $B\in \binom{E}{n+1}$ with $|B\del B^0|\leq 2$ we have $B\in\BB$ if and only if $B\in \BB^\nu_\alpha$, it is straightforward that by scaling rows and columns in $A$ we may obtain the matrix \eqref{eq:MpLind}. Hence $M^\nu_\alpha$ is
linearly represented over $GF(p)$ by \eqref{eq:MpLind}.

Let $B^1:=\{y_0,\ldots, y_n\}$. The determinant of $A_{B^1}$ over $\Z$ equals $n(-1)^n$. Since $B^1\not \in \BB$, we must have $B^1\not \in \BB^\nu_\alpha$, so that $\det A_{B^1}=n(-1)^n\mod p=0$. Hence $p$ divides $n$.\endproof

\ignore{
\noindent{\bf Reid geometries.}
Let $N_n^-$ be the matroid which is linearly represented over $\mathbb{Q}$ by the matrix
\begin{equation}\label{eq:NpLinrep}
\bordermatrix{
    &x_1&x_2&x_3&a_0&b_0&a_1&b_1&\cdots &a_{n-1}&b_{n-1}\cr
    & 1 & 1 & 0 & 1 & 0 & 1 & 0 &   \cdots    & 1   & 0 \cr
    & 0 & 0 & 0 & 1 & 1 & 1 & 1 & \cdots      & 1   & 1 \cr
    & 0 & 1 & 1 & 0 & 0 & 1 & 1 &\cdots & n-1 & n-1 \cr
  }.
\end{equation}
Then $N_n$, the {\em Reid geometry}, is the matroid on the same ground set, but with base set $\BB(N_n):=\BB(N_n^-)\del \{ \{a_0,b_{n-1},x_2\}\}$. Note that if $p$ is a prime, then $N_p$ is represented over $GF(p)$ by \eqref{eq:NpLinrep}.

\begin{theorem}[Gordon \cite{Gordon1988}]
Let $p$ be a prime. If $N_p$ has an algebraic representation over a field $K$, then the characteristic of $K$ is $p$.
\end{theorem}

\begin{theorem}
Let $p$ be a prime, and let $n\geq 2$. If $N_n$ has an algebraic representation over a field of characteristic $p$, then $n=p$.
\end{theorem}

\noindent{\bf Dowling geometries.} Let $G$ be  a finite multiplicative group.
The {\em rank-3 Dowling geometry of $G$}, $Q_3(G)$, is the simple matroid with ground set
$$E=\{p_1, p_2, p_3\}\cup \bigcup_{i=1,2,3} \{g^{(i)}: g\in G\}$$
and with the following flats of rank 2 (`lines'):
\begin{enumerate}
\item $\{g^{(i)}: g\in G\}\cup\{p_j, p_k\}$
for each $i,j,k$ so that $\{i,j,k\}=\{1,2,3\}$; and
\item  $\{f^{(1)},g^{(2)},h^{(3)}\}$ for each $f,g,h\in G$ so that $fgh=1$.
\end{enumerate}

We obtain a result of Aner Ben-Efra\"im.
\begin{theorem} $Q_3^{NF}(Q_8)$ is algebraically representable in characteristic 2 only.\end{theorem}
}

\section{Final remarks}

From any irreducible, $d$-dimensional algebraic variety $X\subseteq
K^E$ in characteristic $p>0$ we have constructed a valuation
$\nu^X:\binom{E}{d} \to \Z \cup \{\infty\}$ through a somewhat elaborate
procedure. In the case of toric varieties, we have seen that the tropical
linear space associated to this valuation equals the tropicalisation of
an associated rational linear space relative to the $p$-adic
valuation on $\Q$;
see Subsection~\ref{sec:Rational}. A natural question is whether the
Lindstr\"om valuation also has a similar direct interpretation for
general varieties, or at least for other specific classes of
varieties. Shortly after the first version of this paper appeared on
\verb+arXiv+,
Dustin Cartwright established an alternative construction of $\nu^X$ which
will likely be very useful in this regard:
for any basis $B$ of $M(X)$, $\nu^X(B)$ equals the logarithm with base $p$ of the {\em inseparable degree} of $K(x_i:i\in B)$ in $K(X)$; that is
$$\nu^X(B)=\log_p [K(X):K(x_i:i\in B)^{sep}]$$
where $K(x_i:i\in B)^{sep}$ denotes the separable closure of $K(x_i:i\in B)$ in $K(X)$ (see \cite[Ch. V]{LangBook}).
Cartwright has described his construction of $\nu^X$, together with a direct proof that this gives a matroid valuation of $M(X)$, in \cite{Cartwright2017}.

Frobenius flocks are a special case of flocks of vector spaces over
a field equipped with an automorphism playing the role of the
Frobenius map. In a forthcoming paper, we develop the structure theory
of such vector space flocks: contraction, deletion, and duality, as well
as circuit hyperplane relaxations.  In a more computational direction,
we plan to develop an algorithm for computing the Lindstr\"om valuation
from an algebraic variety given by its prime ideal.

Our research raises many further questions. One of the most tantalising
is to what extent one can bound the locus of Lindstr\"om
valuations of algebraic matroids inside the corresponding Dressian---good
upper bounds of this type might shed light on the number of algebraic
matroids compared to the number of all matroids.

\bibliography{references}{}

\begin{thebibliography}{10}

\bibitem{Ben-Efraim2016}
Aner Ben-Efraim.
\newblock Secret-sharing matroids need not be algebraic.
\newblock {\em Discrete Math.}, 339(8):2136--2145, 2016.

\bibitem{Borel1991}
Armand {Borel}.
\newblock {\em {Linear algebraic groups. 2nd enlarged ed.}}
\newblock New York etc.: Springer-Verlag, 2nd enlarged ed. edition, 1991.

\bibitem{Cartwright2017}
Dustin Cartwright.
\newblock Construction of the {L}indstr\"om valuation of an algebraic
  extension.
\newblock Preprint, available on arXiv:1704.08671, 2017.

\bibitem{Cox2007}
David Cox, John Little, and Donal O'Shea.
\newblock {\em Ideals, varieties, and algorithms}.
\newblock Undergraduate Texts in Mathematics. Springer, New York, third
  edition, 2007.
\newblock An introduction to computational algebraic geometry and commutative
  algebra.

\bibitem{DressWenzel1992}
Andreas W.~M. Dress and Walter Wenzel.
\newblock Valuated matroids.
\newblock {\em Adv. Math.}, 93(2):214--250, 1992.

\bibitem{Gordon1988}
Gary Gordon.
\newblock Algebraic characteristic sets of matroids.
\newblock {\em J. Combin. Theory Ser. B}, 44(1):64--74, 1988.

\bibitem{Hampe2015}
Simon Hampe.
\newblock Tropical linear spaces and tropical convexity.
\newblock {\em Electron. J. Combin.}, 22(4):Paper 4.43, 20, 2015.

\bibitem{Hochstattler1997}
Winfried Hochstättler.
\newblock About the {T}ic-{T}ac-{T}oe matroid.
\newblock Technical Report 97.272, Universit\"at zu K\"oln, Angewandte
  Mathematik und Informatik, 1997.

\bibitem{Ingleton1971}
A.~W. Ingleton.
\newblock Representation of matroids.
\newblock In {\em Combinatorial {M}athematics and its {A}pplications ({P}roc.
  {C}onf., {O}xford, 1969)}, pages 149--167. Academic Press, London, 1971.

\bibitem{Kiralyi2013}
Franz~J. Kir\'alyi, Zvi Rosen, and Louis Theran.
\newblock Algebraic matroids with graph symmetry.
\newblock Preprint, available on arXiv:1312.3777, 2013.

\bibitem{LamPostnikov2007}
Thomas Lam and Alexander Postnikov.
\newblock Alcoved polytopes. {I}.
\newblock {\em Discrete Comput. Geom.}, 38(3):453--478, 2007.

\bibitem{LangBook}
Serge Lang.
\newblock {\em Algebra}, volume 211 of {\em Graduate Texts in Mathematics}.
\newblock Springer-Verlag, New York, third edition, 2002.

\bibitem{Lindstrom1985}
Bernt Lindstr\"om.
\newblock On the algebraic characteristic set for a class of matroids.
\newblock {\em Proc. Amer. Math. Soc.}, 95(1):147--151, 1985.

\bibitem{MurotaBook}
Kazuo Murota.
\newblock {\em Discrete convex analysis}.
\newblock SIAM Monographs on Discrete Mathematics and Applications. Society for
  Industrial and Applied Mathematics (SIAM), Philadelphia, PA, 2003.

\bibitem{OxleyBook}
James Oxley.
\newblock {\em Matroid theory}, volume~21 of {\em Oxford Graduate Texts in
  Mathematics}.
\newblock Oxford University Press, Oxford, second edition, 2011.

\bibitem{PendavinghZwam2010}
Rudi Pendavingh and Stefan van Zwam.
\newblock Confinement of matroid representations to subsets of partial fields.
\newblock {\em J. Combin. Theory Ser. B}, 100(6):510--545, 2010.

\bibitem{Rosen2014}
Zvi Rosen.
\newblock Computing algebraic matroids.
\newblock Preprint, available on arXiv:1403.8148, 2014.

\bibitem{SchrijverBook}
Alexander Schrijver.
\newblock {\em Combinatorial optimization. {P}olyhedra and efficiency. {V}ol.
  {B}}, volume~24 of {\em Algorithms and Combinatorics}.
\newblock Springer-Verlag, Berlin, 2003.
\newblock Matroids, trees, stable sets, Chapters 39--69.

\bibitem{Shafarevich1994}
Igor~R. {Shafarevich}.
\newblock {\em {Basic algebraic geometry. 1: Varieties in projective space.
  Transl. from the Russian by Miles Reid. 2nd, rev. and exp. ed.}}
\newblock Berlin: Springer-Verlag, 2nd, rev. and exp. ed. edition, 1994.

\bibitem{Speyer2008}
David~E. Speyer.
\newblock Tropical linear spaces.
\newblock {\em SIAM J. Discrete Math.}, 22(4):1527--1558, 2008.

\bibitem{Sturmfels87}
Bernd {Sturmfels}.
\newblock {On the decidability of diophantine problems in combinatorial
  geometry.}
\newblock {\em {Bull. Am. Math. Soc., New Ser.}}, 17:121--124, 1987.

\bibitem{vanderWaerden1940}
Bartel~Leendert van~der Waerden.
\newblock {\em Moderne {A}lgebra}.
\newblock J. Springer, Berlin, 1940.

\bibitem{Whitney1935}
Hassler Whitney.
\newblock On the {A}bstract {P}roperties of {L}inear {D}ependence.
\newblock {\em Amer. J. Math.}, 57(3):509--533, 1935.

\end{thebibliography}
\bibliographystyle{plain}
\end{document}